\theoremstyle{plain}
\newtheorem{thm}{Theorem}[section]
\theoremstyle{plain}
\newtheorem{alg}[thm]{Algorithm}
\newtheorem{prob}[thm]{Problem}
\theoremstyle{definition}
\newtheorem{rem}{Remark}[section]
\newtheorem*{maintheorem*}{Main Theorem}
\newcommand{\R}{\ensuremath{\mathbb{R}}}
\newcommand{\goto}{\ensuremath{\rightarrow}}
\definecolor{dred}{RGB}{180,90,90}
\definecolor{dgreen}{RGB}{70,140,70}
\definecolor{dblue}{RGB}{100,100,180}
\definecolor{boxcolour}{RGB}{220,220,220}
\newcommand{\green}[1]{{#1}}
\newcommand{\fgreen}[1]{}
\newcommand{\fblue}[1]{}
\newcommand{\fred}[1]{}
\newcommand{\opnorm}{\@ifstar\@opnorms\@opnorm}
\newcommand{\@opnorms}[1]{%
  \left|\mkern-1.5mu\left|\mkern-1.5mu\left|
   #1
  \right|\mkern-1.5mu\right|\mkern-1.5mu\right|
}
\newcommand{\@opnorm}[2][]{%
  \mathopen{#1|\mkern-1.5mu#1|\mkern-1.5mu#1|}
  #2
  \mathclose{#1|\mkern-1.5mu#1|\mkern-1.5mu#1|}
}
\numberwithin{equation}{section} \allowdisplaybreaks
\title[Dynamic Programming for Finite Ensembles of Nanomagnetic Particles]
{Dynamic Programming for Finite Ensembles of Nanomagnetic Particles}
\date{}
\subjclass[2000]{45K05, 46S50, 49L20, 49L25, 91A23, 93E20}
\keywords{Stochastic Landau-Lifschitz-Gilbert equation, Stratonovich noise, HJB equation, dynamic programming principle, Hopf-Cole transformation, discretization.
}
 \author[Max Jensen]{Max Jensen}
\address[Max Jensen]{\newline
Department of Mathematics,
University of Sussex,
Pevensey 2 Building, Falmer Campus,
Brighton BN1 9QH, United Kingdom. }
\email[]{m.jensen@sussex.ac.uk}
\author[Ananta K. Majee]{Ananta K. Majee}
\address[Ananta K. Majee]{\newline
Mathematisches Institut,
Universit\"{a}t T\"{u}bingen,
Auf der Morgenstelle 10,
D-72076 T\"{u}bingen, Germany. }
\email[]{majee@na.uni-tuebingen.de}
\author[Andreas Prohl]{ Andreas Prohl}
\address[Andreas Prohl]{\newline
Mathematisches Institut
Universit\"{a}t T\"{u}bingen
Auf der Morgenstelle 10
D-72076 T\"{u}bingen, Germany}
\email[]{prohl@na.uni-tuebingen.de}
\author[Christian Schellnegger]{Christian Schellnegger}
\address[Christian Schellnegger]{\newline
Mathematisches Institut
Universit\"{a}t T\"{u}bingen
Auf der Morgenstelle 10
D-72076 T\"{u}bingen, Germany}
\email[]{schellnegger@na.uni-tuebingen.de}
\thanks{}
\begin{document}
\begin{abstract}
We use optimal control via a distributed exterior field to steer the dynamics of an ensemble of $N$ interacting ferromagnetic particles which are immersed into a heat bath by minimizing a quadratic functional. By using dynamic programing principle, we show the existence of a unique strong solution of the optimal control problem. By the Hopf-Cole transformation, the related
Hamilton-Jacobi-Bellman equation from dynamic programming principle may be re-cast into a linear PDE on the manifold ${\mathcal M} = ({\mathbb S}^2)^N$, whose classical solution may be represented
via Feynman-Kac formula. We use this probabilistic representation for Monte-Carlo simulations to illustrate optimal switching dynamics.
\end{abstract}
\maketitle

\section{Introduction}\label{sec:intro}
We control a ferromagnetic $N$-spin system which is exposed to thermal fluctuations via an exterior forcing ${\bf u} := (u_1, \ldots, u_N): [0,T] \times \Omega \rightarrow ({\mathbb R}^3)^N$. A 
relevant application includes data storage devices, for which it is crucial to control the dynamics of the magnetization ${\bf m} := (m_1, \ldots, m_N): [0,T] \times \Omega \rightarrow ({\mathbb S}^2)^N$ in order to 
ensure
a reliable transport of data which are represented by those magnetic structures.
Besides the control $H_{\tt ext,i}(u_i) = C_{\tt ext}\,u_i$, the $i$-th spin of the ensemble with magnetization $m_i$ is exposed to different forces $H_{\tt ani,i}(m_i)$, $H_{\tt d,i}(m_i)$, 
and $H_{\tt exch,i}({\bf m})$: 

\begin{itemize}
\item the anisotropic force $H_{\tt ani,i}(m_i) = {\tt A} m_i$ for some $ {\tt A}\in {\mathbb R}^{3 \times 3}_{\tt diag}$, which favors alignment of $m_i$ with the given material
dependent easy axis $e \in {\mathbb R}^3$,
\item the `stray-field force' $H_{\tt d,i}(m_i) = - { \tt B}_i m_i$ for some ${\tt B}_i \in {\mathbb R}^{3 \times 3}_{\tt diag}$,
\item the exchange force $H_{\tt exch,i} ({\bf m})$, which penalizes non-alignment of neighboring magnetizations via $H_{\tt exch,i}({\bf m})= -({\bf J}{\bf m})_i$, for some positive semi-definite ${\bf J} \in {\mathbb R}^{3N \times 3N}_{\tt sym}$.
\end{itemize}
For every $1 \leq i \leq N$, we denote their superposition 
\begin{align}
H_{\tt eff,i}({\bf m}, {\bf u}) = H_{\tt ani,i}(m_i) + H_{\tt d,i}(m_i) + H_{\tt exch,i}({\bf m}) + H_{\tt ext,i}(u_i) \label{eq:effective-field}
\end{align}
 as the effective field. The dynamics of ${\bf m}$ for times $[0,T]$ is then governed by the following SDE system ($1 \leq i \leq N$):
\begin{equation}\label{sde-1}
\begin{aligned}
{\rm d} m_i(t) &=\Bigl( m_i \times H_{\tt eff,i} ({\bf m}, {\bf u}) - \alpha \, m_i \times
\bigl[ m_i \times H_{\tt eff,i}({\bf m}, {\bf u})\bigr]\Bigr){\rm d}t + \nu \, m_i  \times \circ\, {\rm d}W_i(t)\,, \\ 
m_i(0) &= \bar{m}_{i} \in {\mathbb S}^2\, .
\end{aligned}
\end{equation}
Here, ${\bf W} = (W_1, \ldots,W_N)$ is a $({\mathbb R}^3)^N$-valued Wiener process on a filtered probability space $(\Omega, {\mathcal F}, \{ {\mathcal F}_t \}_t, {\mathbb P})$ satisfying the usual conditions to represent
thermal fluctuations from the surrounding heat bath. The leading term in the drift part in \eqref{sde-1} causes a precessional motion of $m_i$ around $H_{\tt eff,i} ({\bf m}, {\bf u})$, while the 
dissipative second term scaled by $\alpha >0$ favors a time-asymptotic alignment of $m_i$ with $H_{\tt eff,i} ({\bf m}, {\bf u})$. The Stratonovich type of the stochastic integral of the diffusion term ensures
that each state process $m_i$ takes values in ${\mathbb  S}^2$; see e.g. \cite{banas_brzezniak_neklyudov_prohl} for further details.
\vspace{.2cm}

Our aim is to find a control process ${\bf u}^*$ such that the solution process ${\bf m}^*$ from (\ref{sde-1}) approximates a given deterministic
profile $\widetilde{\bf m} \equiv (\widetilde{m}_1, \ldots, \widetilde{m}_N) \in L^2\bigl( 0,T; ({\mathbb S}^2)^N \bigr)$. More precisely, we aim to solve the following problem.

\begin{prob}\label{problem-1}
Let the parameters $\delta, \nu, \alpha \geq 0$, and  $ T, \lambda >0$, $N \in {\mathbb N}$ as well as $h \in C^2\bigl( ({\mathbb S}^2)^N; {\mathbb R}\bigr)$ be given.  Let $\big(\Omega, \mathbb{P}, \mathcal{F},\{\mathcal{F}_t\}_{0 \le t \le T}\big)$ be a given stochastic basis with the usual conditions,
and ${\bf W}$ be a $\{\mathcal{F}_t\}_{0 \le t \le T}$-adapted $(\R^3)^N$-valued Wiener process on it. Find a pair\footnote[1]{$L^2_{\{ {\mathcal F}_t\}}\Big( \Omega; C\bigl( [0,T]; ({\mathbb S}^2)^N\bigr)\Big):=\Big\{ {\bf m}\in L^2_{\{ {\mathcal F}_t\}} \Big( \Omega; C\bigl( [0,T]; (\R^3)^N\bigr)\Big):\,\bf{m}(t)\in (\mathbb{S}^2)^N$, $\mathbb{P}$-a.s.\,for all $t\in [0,T]\Big\}$.} 
 \begin{align*}
 ({\bf m}^*, {\bf u}^*) \in L^2_{\{ {\mathcal F}_t\}} \Bigl( \Omega; C\bigl( [0,T]; ({\mathbb S}^2)^N\bigr) \times L^2(0,T; (\R^3)^N)\Bigr)
 \end{align*}
which minimizes the cost functional
\begin{equation*}
{\mathcal J}_{\tt sto}({\bf m}, {\bf u}) := {\mathbb E}\Bigl[ \int_0^T
\Bigl( \delta\, \Vert {\bf m}(s) - \widetilde{\bf m}(s)\Vert^2_{({\mathbb R}^3)^N}
+ \frac{\lambda}{2}\, \Vert {\bf u}(s)\Vert^2_{({\mathbb R}^3)^N}\Bigr)\, {\rm d}s +  h\bigl( {\bf m}(T)\bigr)\Bigr]
\end{equation*}
subject to \eqref{sde-1}. We call such a minimiser a strong solution of the optimally controlled Landau-Lifschitz-Gilbert equation. 
\end{prob}
In \cite{dunst_prohl}, some of the present authors have studied Problem~\ref{problem-1} in its weak form and constructed a weak optimal solution
$\Pi^* := \bigl(\Omega^*, {\mathbb P}^*, {\mathcal F}^*, \{{\mathcal F}^*_t\}_{0\le t\le T}, {\bf m}^*, {\bf u}^*, {\bf W}^*\bigr)$ of the underlying problem via Young measure~(relaxed control) approach for a
compact set $\mathbb{U} \subset (\R^3)^N$, a control space, such that ${\bf 0}\in \mathbb{U}$, which may be generalized to the case $\mathbb{U}=(\R^3)^N$
thanks to the coercivity of the cost functional with respect to ${\bf u}$;                                                                                                                      see also \cite{dunst_majee_prohl_vallet} for an 
extension to infinite spin ensembles. To approximate it numerically,
    implementable strategies may be developed that rest on Pontryagin's
     maximum principle which characterizes minimizers. In \cite{dunst_prohl}, a stochastic gradient method is proposed to generate a
      sequence of functional-decreasing approximate feedback controls, where
       the update requires to  solve a coupled forward-backward SDE system.
       A relevant part here is to simulate a (time-discrete) backward SDE via
 the least-squares Monte-Carlo method, which requires significant 
 data storage resources \cite{dunst_majee_prohl_vallet,dunst_prohl}, and thus limits  the complexity of practically approachable Problems~\ref{problem-1}.
 \vspace{.1cm}
 
In this work, we use an alternative strategy which rests on the dynamic programming principle. This allows us to prove the existence of a unique {\em strong solution} of Problem~\ref{problem-1}, which sharpens results of \cite{dunst_prohl}. Since the solution of the underlying SDE lies on  ${\mathcal M} = ({\mathbb S}^2)^N$, the Hamilton-Jacobi-Bellman equation is defined on the manifold $[0,T] \times \mathcal{M}$. In Section \ref{sec:hjb} we verify that the value function is the unique solution of that Bellman equation and
that it belongs to $C^{1,2} \bigl([0,T] \times \mathcal{M}\bigr)$.
 To solve this semi-linear PDE by deterministic numerical strategies seems non-accessible due to the high dimension of the underlying manifold ${\mathcal M} $; we also want to avoid a direct probabilistic representation of its solution which would involve a backward SDE.
 Indeed, we demonstrate how the nonlinear HJB
 equation may be replaced with a {\em linear} parabolic PDE~\eqref{lHJB} by applying the Hopf-Cole transformation. The quadratic form resp. linearity of the control in the cost functional resp. in the equation~\eqref{sde-1} together with the geometric character of the problem then lead to an isotropic quadratic term in the HJB equation~\eqref{HJB}, which is crucial for this transformation. The regularity of the value function and the optimal policy mapping is explicitly expressed through the regularity of the terminal condition $h$. Furthermore, the solution~$w$ of the linear parabolic PDE can now be represented via a Feynman-Kac formula. This is the starting point for the numerical scheme proposed 
 in Section \ref{sec:numerics}. To approximate the optimal pair $({\bf m}^*, {\bf u}^*)$ numerically, a Monte-Carlo method for the solution $w$ of the linear equation \eqref{HJB} and its tangential gradient $\nabla_{\mathcal{M}}w$ is proposed, from which the optimal feedback function $\bar{\bf u}$ can be
 obtained directly via \eqref{eq:control-feedback}. To approximate $\nabla_{\mathcal{M}}w$ through a difference quotient with needed accuracy, we choose a stencil 
 diameter $\bar{h} = \mathcal{O}\big(1/\sqrt{M}\big)$ for a sufficiently large number of Monte-Carlo realizations $M$; see Remark~\ref{rem:h-M-parameters}. Importantly, this approach does not require larger data storage resources as \cite{dunst_majee_prohl_vallet,dunst_prohl} does, but an ample calculation of iterates from related SDEs. Computational studies for the switching dynamics of single and multiple ferromagnetic particles are reported in Section~\ref{sec:computational-experiments}.

\section{The stochastic Landau-Lifschitz-Gilbert equation}\label{sec:sllg}
The solution process ${\bf m}$ of \eqref{sde-1} attains values in ${\mathcal M} = (\mathbb{S}^2)^N$. For any 
\begin{align*}
\pmb{m} = ({\it m}_1, \ldots, {\it m}_N)^\top = \big({\it m}_{1,1}, {\it m}_{1,2}, {\it m}_{1,3}, \ldots, {\it m}_{N,1}, {\it m}_{N,2}, {\it m}_{N,3}\big)^\top \in {\mathcal M}
\end{align*}
we have
\begin{align*}
\| \pmb{m} \|^2_{(\R^3)^N} = \sum_{i=1}^N \| {\it m}_i \|^2_{{\mathbb R}^3} = 
\sum_{i=1}^N \sum_{\ell=1}^3 | {\it m}_{i,\ell} |^2 = N\, .
\end{align*}
For any $\pmb{m} = ({\it m}_1, \ldots, {\it m}_N)^\top \in \mathcal{M}$, denote $\pmb{\sigma}(\pmb{m}) = {\tt diag} \bigl(\sigma(m_1), \ldots, \sigma(m_N) \bigr)$,
where $\sigma({\it m}_i) \in {\R}^{3 \times 3}$ is the matrix 
\begin{align*}
\sigma(m_i) = 
\begin{pmatrix} 
0 & - {\it m}_{i,3} & {\it m}_{i,2}\\
{\it m}_{i,3} & 0 & -{\it m}_{i,1}\\ 
- {\it m}_{i,2} & {\it m}_{i,1} & 0
\end{pmatrix}\,.
\end{align*}
Again, for any $(\pmb{m}, \pmb{u})\in \mathcal{M}\times (\R^3)^N$, we define
\begin{align*}
{\bf f}(\pmb{m}, \pmb{u}) := \pmb{m} \times \mathbf{H}_{\tt eff} (\pmb{m},\pmb{u}) - \alpha \, \pmb{m} \times \bigl[\pmb{m} \times \mathbf{H}_{\tt eff}(\pmb{m},\pmb{u}) \bigr]\, .
\end{align*}
Then, using $\pmb{\sigma}$ also here, and  combining   $H_{\tt ani,i}({\it m}_i) + H_{\tt d,i}({\it m}_i)= - {\tt D}_i {\it m}_i$ with
some $ {\tt D}_i= {\tt B}_i - {\tt A}\in {\mathbb R}^{3 \times 3}_{\tt diag} $ as well as  ${\bf D}={\tt diag}\big( {\tt D}_1, \ldots, {\tt D}_N\big)$, we have
\begin{align}
{\bf f}(\pmb{m}, \pmb{u}) 
& = \pmb{\sigma}(\pmb{m}) \mathbf{H}_{\tt eff} (\pmb{m},\pmb{u})   - \alpha \pmb{m}\times \pmb{\sigma}(\pmb{m}) \mathbf{H}_{\tt eff} (\pmb{m},\pmb{u})= \big({\bf Id} - \alpha \, \pmb{\sigma}(\pmb{m}) \big)  \, \pmb{\sigma}(\pmb{m})  \mathbf{H}_{\tt eff} (\pmb{ m},\pmb{u})\notag \\
& = \pmb{\Sigma} \bigl(\pmb{m}) \big(-\mathbf{J} \pmb{m} - {\bf D} \pmb{m} + C_{\tt ext}\,\pmb{u} \bigr)\, , \label{eq:expression-drift}
\end{align}
where 
\begin{align}
\pmb{\Sigma}(\pmb{m}) = \bigl({\bf Id} - \alpha \, \pmb{\sigma}(\pmb{m}) \bigr) \pmb{\sigma}(\pmb{m})\,.\label{defi:sigma}
\end{align} 
The matrix $\pmb{\Sigma}(\pmb{m}) \in {\mathbb R}^{3N \times 3N}$ is block-diagonal, with its $i$-th block 
\begin{align*}
\Sigma_i({\it m}_i) = \begin{pmatrix}
\alpha\, ({\it m}_{i,2}^2 + {\it m}_{i,3}^2) & -{\it m}_{i,3} - \alpha\, {\it m}_{i,1} {\it m}_{i,2}      & {\it m}_{i,2} - \alpha\, {\it m}_{i,1} {\it m}_{i,3}\\
{\it m}_{i,3} - \alpha\, {\it m}_{i,1} {\it m}_{i,2}     & \alpha\, ({\it m}_{i,1}^2 + {\it m}_{i,3}^2) & -{\it m}_{i,1} - \alpha \,{\it m}_{i,2} {\it m}_{i,3}\\
- {\it m}_{i,2} - \alpha\, {\it m}_{i,1} {\it m}_{i,3}     & {\it m}_{i,1} -\alpha\, {\it m}_{i,2} {\it m}_{i,3}      & \alpha\, ({\it m}_{i,1}^2 + {\it m}_{i,2}^2)
\end{pmatrix}.
\end{align*}
For ${\it m}_i \in \mathbb{S}^2$, one has 
\begin{align*}
\Sigma_i({\it m}_i) = \bigl({\rm Id} - \alpha \, \sigma({\it m}_i) \bigr) \sigma({\it m}_i) = \sigma({\it m}_i) + \alpha\, \bigl({\rm Id}  - {\it m}_i \otimes {\it m}_i \bigr) = \sigma({\it m}_i) + \alpha \, {\mathcal P}({\it m}_i)\,,
\end{align*}
where ${\mathcal P}({\it m}_i)$ is the orthogonal projection onto the tangent plane of $\mathbb{S}^2$  at ${\it m}_i$. Note that the diffusion term $\nu\, m_i(s) \times (\circ {\rm d}W_i(s))$ in \eqref{sde-1} can be re-written as $\nu\, \sigma(m_i(s))\circ {\rm d}W_i(s)$. To state the dynamic programming equation, we introduce a family of stochastic
Landau-Lifschitz-Gilbert equations with different initial times $t \in [0,T]$ and states $\pmb{m} \in {\mathcal M}$:
\begin{equation} \label{SLLG}
\begin{aligned}
{\rm d} {\bf m}(s)&= {\bf f}({\bf m}(s), {\bf u}(s))\,{\rm d}s + \nu \, \pmb{\sigma}({\bf m}(s)) \circ \,{\rm d}\mathbf{W}(s) \qquad (t< s \le T)\, ,  \\
 {\bf m}(t)&= {\pmb m} \in {\mathcal M}\, ,
 \end{aligned}
\end{equation}
where ${\bf f}$ is defined in \eqref{eq:expression-drift}. The solutions ${\bf m} = {\bf m}^{t, {\pmb m}}$ of \eqref{SLLG} thus depend on $t$ and ${\pmb m}$; however, we
shall drop the superscript of ${\bf m}^{t, {\pmb m}}$ in the subsequent text for the ease of notation. For every $0\le t\le T$, ${\bf m}(t)=\pmb{m}$, and ${\bf u} \in L^2_{\{ {\mathcal F}_s\}}\bigl( \Omega; L^2(t,T; (\R^3)^N)\bigr)$, there exists a
 unique strong solution ${\bf m}=(m_1, \ldots, m_N) \in L^2_{\{\mathcal{F}_s\}}\big( \Omega; C(t,T; (\R^3)^N)\big)$ of \eqref{SLLG}. Indeed by considering the truncation of the control and then using the stochastic version of the Arzela-Ascoli theorem, Prohorov's
 lemma and Jakubowski-Skorokhod representation theorem (cf.~\cite{Jakubowski}), we have existence of a weak solution of \eqref{SLLG}. Moreover, an application of Gy\"{o}ngy-Krylov's characterization of convergence in probability introduced in \cite{gyongy-krylov} along with pathwise uniqueness of weak martingale solutions gives  existence of a unique strong solution, see \cite[Appendix]{dunst_majee_prohl_vallet}.  Furthermore, by applying It\^{o}'s formula to the functional ${\bf x}\goto \|{\bf x}\|_{\R^3}^2$ for $m_i$ and using the vector identity $\langle {\bf a}, {\bf a} \times {\bf b}\rangle =0$ for any ${\bf a}, {\bf b}\in \R^3$, we have $\mathbb{P}$-a.s.,
 \begin{align*}
 \|m_i(s)\|_{\R^3}^2&=  \|m_i(t)\|_{\R^3}^2 + \int_t^s\Big[  \nu^2\,\|m_i(r)\|_{\R^3}^2 
 + 2\,\big\langle {\bf f}_i({\bf m}(r),{\bf u}(r)), m_i(r)\rangle 
 -\nu^2\,\|m_i(r)\|_{\R^3}^2\Big]\,{\rm d}r \\
 &\hspace{1cm} + 2\nu\, \int_t^s \langle m_i(r)\times {\rm d}W_i(r), m_i(r)\rangle =  \|m_i(t)\|_{\R^3}^2\,.
 \end{align*}
 Since $m_i(t)\in \mathbb{S}^2$, we see that $\mathbb{P}$-a.s., each $m_i$ is $\mathbb{S}^2$-valued, and thus ${\bf m}\in L^2_{\{\mathcal{F}_s\}}\big( \Omega; C(t,T; (\mathbb{S}^2)^N)\big)$.
\vspace{.2cm}

Because the paths of the Landau-Lifschitz-Gilbert process stay on the manifold $\mathcal{M}$, the natural domain for the value function
of the control problem is $[0,T] \times \mathcal{M}$. In order to make the connection between the controlled process $\bf m$ on the one hand and
the Hamilton-Jacobi-Bellman PDE posed on $[0,T] \times \mathcal{M}$ on the other hand, it is convenient to describe properties of $\bf m$ purely in terms of quantities that are
intrinsically defined on $\mathcal{M}$, without referring to the ambient space $(\R^3)^N$. Of particular interest is Dynkin's formula.
\vspace{.2cm}

We begin by rewriting It\^o's formula with tangential derivatives.
For $1 \leq \ell \leq 3N$, let $\pmb{\sigma}(\pmb{m})_\ell$ be the $\ell$th row of $\pmb{\sigma}(\pmb{ m})$, for $\pmb{m} \in {\mathcal M}$. Then 
$\partial_{\pmb{\sigma}(\pmb{m})_\ell}$ denotes the tangential derivative in the direction $\pmb{\sigma}(\pmb{m})_\ell$, and $\partial_{\pmb{\sigma}(\pmb{m})} :=
\bigl( \partial_{\pmb{\sigma}(\pmb{m})_1}, \ldots, \partial_{\pmb{\sigma}(\pmb{ m})_{3N}}\bigr) \in [T_{\pmb{m}} {\mathcal M}]^{3N}$.
Similarly, $\partial_{{\bf f}(\pmb{m}, \pmb{u})}$ is the tangential derivative in the direction  ${\bf f}(\pmb{m}, \pmb{u})$. 
\vspace{.2cm}

We wish to apply It\^o's formula to $\psi (s,{\bf m}(s))$ for any $\psi \in C^{1,2}\big([0,T]\times \mathcal{M}\big)$. One may directly return to the standard formula on $(\R^3)^N$, cf.~\cite[Chapter V.1]{Watanabe}, by extending $\psi$ via
\begin{align*}
\hat{\psi}(s,\hat{\pmb{m}}) = \psi\Big( s,\big( \frac{\hat{{\it m}}_1}{\|\hat{{\it m}}_1\|_{\R^3}},\ldots, \frac{\hat{{\it m}}_N}{\|\hat{{\it m}}_N\|_{\R^3}}\big)\Big)
\end{align*}
to $[0,T] \times (\R^3\setminus \{0\})^N$, for any $\hat{\pmb{m}}=\big(\hat{{\it m}}_1,\ldots, \hat{{\it m}}_N\big)\in (\R^3\setminus \{0\})^N$. Then
${\mathbb P}$-a.s.:
\begin{align} \label{wSLLG}
\psi \bigl(s,{\bf m}(s) \bigr) - \psi\big(t, {\pmb m}\big) &= \int_t^s \partial_t \psi\big(r, {\bf m}(r)\big)\, {\rm d}r + \int_t^s \partial_{{\bf f} \bigl({\bf m}(r), \mathbf{u}(r) \bigr)} \psi\big(r,{\bf m}(r)\big) \, {\rm d} r \notag \\
& \hspace{0.4cm} + \nu \int_t^s \partial_{\pmb{\sigma} \bigl({\bf m}(r)\bigr)}\psi\big(r,{\bf m}(r)\big) \circ \,{\rm d}\mathbf{W}(r) \quad (t \leq s \leq T)\, .
\end{align}
 For $\psi \in C^{1,2}([0,T]\times {\mathcal M})$, we associate the generator of the Markov process ${\bf m}$ as
\begin{align*}
\mathcal{A}^{\pmb{u}} \, \psi(s, \pmb{m}) = \frac{\nu^2}{2}\, \Delta_{\mathcal M} \psi(s,\pmb{m}) + \partial_{{\bf f}\big({\pmb{m}}, \pmb{u}\big)} \psi(s,\pmb{m})\,,
\end{align*}
where $\Delta_{\mathcal M}$ denotes the Laplace-Beltrami operator on ${\mathcal M}$, and define the operator
\begin{align}
\mathcal{A}_1^{\pmb{u}} \, \psi(s, \pmb{m}):= \partial_t\psi(s,\pmb{m}) + \mathcal{A}^{\pmb{u}} \, \psi(s, \pmb{m})\,.  \label{eq:generator}
\end{align}
 Re-writing \eqref{wSLLG} with $s=T$ in It\^{o} form, we have 
\begin{align*}
&\psi\big(T,{\bf m}(T)\big) - \psi\big(t,\pmb{m}\big) \notag \\
& =\int_t^T \Bigl[ \partial_t \psi\big(r,{\bf m}(r)\big) + \partial_{{\bf f}\big({\bf m}(r), \mathbf{u}(r)\big)} \psi\big(r,{\bf m}(r)\big) + \frac{\nu^2}{2}\, \partial_{\pmb{\sigma}\big({\bf m}(r)\big)} \partial_{\pmb{\sigma}\big({\bf m}(r)\big)} \psi\big(r,{\bf m}(r)\big) \Bigl] \,{\rm d} r\\
& \hspace{2cm} +\nu\, \int_t^T \partial_{\pmb{\sigma}\big({\bf m}(r)\big)} \psi\big(r,{\bf m}(r)\big) \,{\rm d} \mathbf{W}(r)\, .
\end{align*}
As in the proof of \cite[Proposition~$3.2$]{NP} we conclude that
\begin{align*}
\partial_{\pmb{\sigma}\big({\bf m}(r)\big)} \partial_{\pmb{\sigma}\big({\bf m}(r)\big)} \psi\big(r,{\bf m}(r)\big) = \Delta_{\mathcal{M}} \psi\big(r,{\bf m}(r)\big)\, .
\end{align*}
Taking the expectation then leads to Dynkin's formula
\begin{align} \label{eq:Dynkin}
\mathbb{E}_{t,\pmb{m}}\Big[\psi\big(T, \mathbf{m}(T)\big)\Big] - \psi(t, \pmb{m}) = \mathbb{E}_{t,\pmb{m}}\Big[ \int_t^T \mathcal{A}_1^{\mathbf{u}(r)} \psi\big(r, \mathbf{m}(r)\big) \, {\rm d} r\Big]\,,
\end{align}
recalling that the It\^{o} integral is a martingale.

\section{Dynamic programming  and HJB equation}\label{sec:hjb}
For any $(t,\pmb{m})\in [0,T]\times \mathcal{M}$, we consider problem \eqref{SLLG} to now construct the associated Hamilton-Jacobi-Bellman equation, following the formal rules of 
 dynamic programming. We then use the Hopf-Cole transformation to replace the nonlinear HJB equation by a linear PDE and show the existence of a unique classical solution, which then implies
 existence of a unique classical solution of the original nonlinear HJB equation. Next, we present a verification theorem which shows that the value function is indeed equal to the solution 
 of the HJB equation. We describe the optimal control through an optimal feedback  function which is written explicitly in terms of the value function.
\vspace{.2cm}

Let us define the Lagrangian 
\begin{align}
L\big(\pmb{m}, \pmb{u}\big) = \delta \| \pmb{m} - \widetilde {\bf m} \|_{(\R^3)^N}^2 + \frac{\lambda}{2}\, \| \pmb{u} \|_{(\R^3)^N}^2\,, \label{eq:lagrangian}
\end{align}
where the parameters $\delta, \lambda$ are given in Problem~\ref{problem-1}.
\vspace{.2cm}
 
Let $\big(\Omega, \mathbb{P}, \mathcal{F},\{\mathcal{F}_s\}_{t \le s \le T}\big)$ be a given filtered probability space satisfying the usual hypotheses, and $\mathbf{W}$ is 
 a $\{\mathcal{F}_s\}_{t \le s \le T}$-adapted $(\R^3)^N$-valued Wiener process on it. 
We denote by $\mathcal{U}_{\pmb{m}}^{\pmb{s}}[t,T]$ the set of all admissible pairs $({\bf m}, {\bf u})$ such that $\mathbf{u} \in L^2_{\{{\mathcal F}_{s}\}}\bigl( \Omega; L^2( t,T; ({\mathbb R}^{3})^N )\bigr)$, and ${\bf m(\cdot)}$ is the unique $\{ \mathcal{F}_s \}_{t \le s \le T}$-adapted $ \mathcal{M}$-valued strong solution of \eqref{SLLG}. In fact, the superscript $\pmb{s}$ refers to the fact that we search an
optimal admissible pair on the given filtered probability space. It follows for admissible $({\bf m}, {\bf u})$ that $L\big({\bf m}(\cdot), {\bf u}(\cdot)\big)\in L^1_{\{{\mathcal F}_{s}\}}\bigl( \Omega; L^1( t,T; \R )\bigr)$ 
and $h({\bf m}(T))\in L^1_{\mathcal{F}_T}(\Omega; \R)$, recalling that $h$ is a continuous function on a compact manifold. As a further consequence,  Dynkin's formula \eqref{eq:Dynkin} then holds for all $\psi \in C^{1,2}([t,T]\times \mathcal{M})$ and $({\bf m}, {\bf u}) \in \mathcal{U}_{\pmb{m}}^{\pmb{s}}[t,T]$.
\vspace{.2cm}

 Our aim is to achieve ${\bf m}$ close to a
reference configuration $\widetilde {\bf m} \in C^2([t,T]; \mathcal{M})$ by selecting an optimal control ${\bf u}^*$. The cost functional on $\mathcal{U}_{\pmb{m}}^{\pmb{s}}[t,T]$ is
\begin{align} \label{costf}
\mathcal{J}\big(t,\,\pmb{m}; ({\bf m},{\bf u})\big) = \mathbb{E}_{t,\pmb{m}} \Bigl[ \int_t^T L({\bf m}(r), {\bf u}(r)) \,{\rm d} r + h\big({\bf m}(T)\big) \Bigr]\,.
\end{align}
We write the value function of $\mathcal{J}$ as
\begin{align}
 V(t,\pmb{m})=\inf_{ ({\bf m},{\bf u}) \in \mathcal{U}_{\pmb{m}}^{\pmb{s}}[t,T]} \mathcal{J}\big(t,\,\pmb{m};\,({\bf m},{\bf u})\big)\,. \label{eq:value-function}
\end{align} 
Note that, thanks to \cite[Proposition~$1.33$]{banas_brzezniak_neklyudov_prohl}, there exists a unique strong solution ${\bf m}$ of \eqref{SLLG} for ${\bf u}={\bf 0}$, and hence the value function is uniformly bounded since $h$ is a given continuous function and $\|{\bf m}\|_{(\R^3)^N}^2=N$.
\subsection{The Hamilton-Jacobi-Bellman equation}\label{subsec:hjb}
 We define the Hamiltonian
\begin{align*}
\mathcal{H} : \; C^2(\mathcal{M}) &\to C(\mathcal{M}) \notag \\
 \psi  &\mapsto - \frac{\nu^2}{2} \, \Delta_{\mathcal M} \psi - \delta\, \| \pmb{m} - \widetilde{\bf m} \|_{(\R^3)^N}^2 
+ \sup_{\pmb{u} \in (\R^3)^N} \Big( - \partial_{{\bf f}\big(\pmb{m},\pmb{u}\big)} \psi - \frac{\lambda}{2} \, \| \pmb{u} \|_{(\R^3)^N}^2 \Big)\,.
\end{align*}
Note that in the definition of $\mathcal{H}$, the letter $\pmb{m}$ stands for the identity map $\pmb{m} \mapsto \pmb{m}$. Using \eqref{eq:expression-drift}, we evaluate the supremum analytically. We have for the tangential gradient $\nabla_\mathcal{M} \psi$
\begin{align}
& \sup_{\pmb{u} \in (\R^3)^N} \Big( -  \partial_{{\bf f}\big(\pmb{m},\pmb{u}\big)} \psi- \frac{\lambda}{2} \, \| \pmb{u} \|_{(\R^3)^N}^2 \Big) \notag \\
&=\sup_{\pmb{u} \in (\R^3)^N} \Big(- {\bf f}(\pmb{m}, \pmb{u}) \cdot \nabla_\mathcal{M} \psi(\pmb{m}) - \frac{\lambda}{2} \, \| \pmb{u} \|_{(\R^3)^N}^2 \Big) \notag \\
&= \sup_{\pmb{u} \in (\R^3)^N} \Big(- \pmb{\Sigma}(\pmb{m})(- \,{\bf J} \pmb{m} - {\bf D}\pmb{m} + C_{\tt ext} \pmb{u}) \cdot \nabla_\mathcal{M} \psi(\pmb{m}) - \frac{\lambda}{2} \, \| \pmb{u} \|_{(\R^3)^N}^2 \Big)\notag \\
 &= \pmb{\Sigma}(\pmb{m}) \big( {\bf J} \pmb{m} + {\bf D}\pmb{m}\big) \cdot \nabla_\mathcal{M} \psi(\pmb{m}) + \sup_{\pmb{u} \in (\R^3)^N} \Big(- C_{\tt ext} \pmb{\Sigma}(\pmb{m}) \pmb{u} \cdot \nabla_\mathcal{M} \psi(\pmb{m}) - \frac{\lambda}{2} \, \| \pmb{u} \|_{(\R^3)^N}^2 \Big)\notag \\
 &= \pmb{\Sigma}(\pmb{m}) {\bf Q} \pmb{m}\cdot \nabla_\mathcal{M} \psi(\pmb{m}) + \sup_{\pmb{u} \in (\R^3)^N} \Big(\pmb{u} \cdot \big(- C_{\tt ext}\pmb{\Sigma}^\top(\pmb{m})\nabla_\mathcal{M} \psi(\pmb{m})\big) - \frac{\lambda}{2} \, \| \pmb{u} \|_{(\R^3)^N}^2 \Big)\notag \\
 &= \pmb{\Sigma}(\pmb{m}) {\bf Q} \pmb{m} \cdot \nabla_\mathcal{M} \psi(\pmb{m}) + \big(\frac{\lambda}{2} \, \| \cdot \|_{(\R^3)^N}^2\big)^* \big(-C_{\tt ext}\pmb{\Sigma}^\top(\pmb{m})\nabla_\mathcal{M} \psi(\pmb{m})\big)\notag \\
& = \pmb{\Sigma}(\pmb{m}) {\bf Q} \pmb{m} \cdot \nabla_\mathcal{M} \psi(\pmb{m}) + \frac{1}{2\lambda}\, \| - C_{\tt ext}\pmb{\Sigma}^\top(\pmb{m})\nabla_\mathcal{M} \psi(\pmb{m}) \|_{(\R^3)^N}^2\,, \label{Legendre}
\end{align}
where $^*$ denotes the convex conjugate function,
and ${\bf Q}$ is the $3N \times 3N$ matrix given by 
\begin{align}
{\bf Q}:=  {\bf J} + {\bf D}\, .\label{eq:Q}
\end{align}
Since $\nabla_\mathcal{M} \psi(\pmb{m})$ belongs to the tangent space $T_{\pmb{m}}\mathcal{M}$, it follows that
\begin{align*}
- \pmb{\Sigma}^\top(\pmb{m})\nabla_\mathcal{M} \psi(\pmb{m})
& = \pmb{\sigma}(\pmb{m})\nabla_\mathcal{M} \psi(\pmb{m}) + \alpha\, \pmb{\sigma}(\pmb{m}) \pmb{\sigma}(\pmb{m})\nabla_\mathcal{M} \psi(\pmb{m}) \\
&= \pmb{m} \times \nabla_{\mathcal{M}} \psi(\pmb{m}) + \alpha\,  \pmb{m} \times (\pmb{m}\times \nabla_\mathcal{M} \psi(\pmb{m})) 
=  \pmb{m} \times \nabla_\mathcal{M} \psi(\pmb{m})- \alpha\, \nabla_\mathcal{M} \psi(\pmb{m})\,.
\end{align*}
 Note that
for any $\pmb{m} \in \mathcal{M}$, 
 $\| \pmb{m} \times \nabla_\mathcal{M} \psi(\pmb{m}) \|_{(\R^3)^N} = \| \nabla_\mathcal{M} \psi(\pmb{m}) \|_{(\R^3)^N}$. Thus by Pythagoras' theorem, 
we have 
\begin{align}
&\|- \pmb{\Sigma}^\top(\pmb{m})\nabla_\mathcal{M} \psi(\pmb{m}) \|_{(\R^3)^N}^2  = \|\pmb{m} \times \nabla_\mathcal{M} \psi(\pmb{m})- \alpha \,\nabla_\mathcal{M} \psi(\pmb{m}) \|_{(\R^3)^N}^2 \notag  \\
& = \| \pmb{m} \times \nabla_\mathcal{M} \psi(\pmb{m}) \|_{(\R^3)^N}^2 + \| \alpha \nabla_\mathcal{M} \psi(\pmb{m}) \|_{(\R^3)^N}^2
 = (1 + \alpha^2) \| \nabla_\mathcal{M} \psi(\pmb{m}) \|_{(\R^3)^N}^2\, . \label{eq:pythagoras}
\end{align}
Let us denote 
\begin{align}
{\bf b}(\pmb{m}) := \pmb{\Sigma}(\pmb{m}) {\bf Q} \pmb{m}= \pmb{\Sigma}(\pmb{m}) \big( {\bf J} \pmb{m} + {\bf D} \pmb{m}\big)\, .\label{defi:b}
\end{align}
Then, in summary, we have 
\begin{align*}
\mathcal{ H} \psi(\pmb{m})=& - \frac{\nu^2}{2} \, \Delta_{\mathcal{M}} \psi(\pmb{m}) + \frac{C_{\tt ext}^2(1+ \alpha^2)}{2\lambda}\, \|\nabla_\mathcal{M} \psi(\pmb{m}) \|_{(\R^3)^N}^2\\
& \qquad +\, {\bf b}(\pmb{m}) \cdot \nabla_\mathcal{M}\psi(\pmb{m}) - \delta\, \| \pmb{m} - \widetilde{{\bf m}} \|_{(\R^3)^N}^2\,.
\end{align*}
We point out that \eqref{eq:pythagoras} ensures that the quadratic term in the Hamiltonian is isotropic, which is crucial for the Hopf-Cole transformation below.  
We now state the Hamilton-Jacobi-Bellman equation, whose solution we denote by $W$:

\begin{equation}\label{HJB}
\begin{aligned}
-\partial_t W(t,\pmb{m}) + \mathcal{H} W(t,\pmb{m}) &= 0 \hspace{2cm} \text{in}\quad [0,T) \times \mathcal{M}\, , \\
 W(T,\pmb{m}) &= h(\pmb{m}) \hspace{1.3cm} \text{on}\quad \mathcal{M}\,. 
\end{aligned}
\end{equation}

\noindent{\bf The Hopf-Cole transformation:}
The HJB equation \eqref{HJB} is a second-order nonlinear PDE on a high-dimensional domain and therefore without further understanding of its structure computationally expensive to 
solve; the study of its wellposedness as well as the regularity of its solution is non-trivial. We use the Hopf-Cole transformation $w = \exp \bigl(- \beta \, W \bigr)$, $\beta \in \R$ given below, to
substitute \eqref{HJB} by the linear PDE \eqref{lHJB}. 
\vspace{.2cm}

We span the tangent space of $\mathcal{M}$ at any point $\pmb{m}$ by the orthonormal \green{tangent} vectors
\begin{align*}
\partial_{1,1}, \partial_{1,2}, \ldots, \partial_{N,1}, \partial_{N,2}\,.
\end{align*}
It is convenient to conceptually let $\partial_{i,1}, \partial_{i,2}$ span the local coordinates associated to the $i$th sphere. Then we have the following relations: for $j\in \{1,2\}$ and $i\in \{1,2,\cdots, N\}$,
\begin{align*}
\partial_{i,j} w & = - \beta \, w \; \partial_{i,j} W && \Leftrightarrow \qquad \partial_{i,j} W = - \frac{1}{\beta \, w} \; \partial_{i,j} w\,,\\
\partial_{i,j;i,j}^2 w & = - \beta \, w \; \bigl( - \beta \, |\partial_{i,j} W|^2 + \partial_{i,j;i,j}^2 W\bigr) && \Leftrightarrow \qquad \partial_{i,j;i,j}^2 W = - \frac{1}{\beta \, w} \; \partial_{i,j;i,j}^2 w + \beta \, |\partial_{i,j} W|^2\,.
\end{align*}
 We see that
\begin{align*}
\Delta_{\mathcal{M}} W & = \sum_i \Bigl[ \partial_{i,1; i,1}^2 W + \partial_{i,2; i,2}^2 W \Bigr]\\
& = \sum_i \Bigl[ \Bigl( - \frac{1}{\beta \, w} \; \partial_{i,1; i,1}^2 w + \beta \, |\partial_{i,1} W|^2 \Bigr) 
+ \Bigl( - \frac{1}{\beta \, w} \; \partial_{i,2; i,2}^2 w + \beta \, |\partial_{i,2} W|^2 \Bigr) \Bigr]\\
& = - \frac{\Delta_{\mathcal{M}} w} {\beta \, w} + \beta\, \sum_i \Bigl[ |\partial_{i,1} W|^2 + |\partial_{i,2} W|^2 \Bigr]\,.
\end{align*}
Therefore, we have the following correspondences:

\medskip

\begin{center}
\begin{tabular}{c|c|c}
& $W$ variable  & $w$ variable\\ \hline & \\[-2mm]
(a) & $- \partial_t W$ & $ \displaystyle \frac{1}{\beta \, w}\,\partial_t w$\\[4mm]
(b) & $ \displaystyle- \frac{\nu^2}{2} \Delta_{\mathcal{M}} W$ & $ \displaystyle \frac{\nu^2}{2\beta \, w} \Delta_{\mathcal{M}} w - \frac{\nu^2}{2}\beta \sum_i \bigl[ |\partial_{i,1} W|^2 + |\partial_{i,2} W|^2 \bigr]$\\[5mm]
(c) & ${\bf b} \cdot \nabla_{\mathcal{M}} W$ & $- \displaystyle \frac{1}{\beta \, w}\,{\bf b}\cdot \nabla_\mathcal{M}w $\\[1mm]
\end{tabular}
\end{center}

Recalling $\| \nabla_\mathcal{M} W \|_{(\R^3)^N}^2 = \sum_i \bigl[ |\partial_{i,1} W|^2 + |\partial_{i,2} W|^2 \bigr]$, we choose $ \displaystyle \beta = \frac{ C_{\tt ext}^2(1 + \alpha^2)}{\lambda \nu^2}$ to obtain a cancellation of the quadratic nonlinearity via ${\rm (b)}$. Substituting the respective terms in \eqref{HJB}
and multiplying by $ - \beta \, w \neq 0$, we arrive at the following second-order linear  equation
\begin{align} \label{lHJB}
- &\partial_t w(t,\pmb{m}) - \frac{\nu^2}{2}\Delta_{\mathcal{M}} w(t,\pmb{m}) + {\bf b}(\pmb{m}) \cdot \nabla_\mathcal{M} w(t,\pmb{m}) + c(t,\pmb{m}) \, w(t, \pmb{m}) = 0 \quad \text{in}\quad [0,T) \times \mathcal{M}\,, \notag \\
&w(T,\pmb{m})= \exp\big( -\beta\, h(\pmb{m})\big) \quad \text{on}\quad \mathcal{M}\,,
\end{align}
where $c(t,\pmb{m})=\beta\,\delta \, \| \pmb{m} - \widetilde {{\bf m}}(t) \|_{(\R^3)^N}^2$.
The following theorem shows that weak solutions $w$ may be examined in the Sobolev-Bochner space
\begin{align*}
W^{1,2,2}\big([0,T]; H^1({\mathcal{M}}), H^{-1}({\mathcal{M}})\big) = \left\{ u \in L^2\left([0,T]; H^1({\mathcal{M}})\right); \; \frac{{\rm d}u}{{\rm d}t} \in L^2\left([0,T]; H^{-1}({\mathcal{M}})\right)\right\}.
\end{align*}
A weak solution of \eqref{lHJB} is a $w \in W^{1,2,2}\big([0,T]; H^1({\mathcal{M}}), H^{-1}({\mathcal{M}})\big)$ such that for all $\psi \in L^2\big(0,T; H^1(\mathcal{M})\big)$:
\begin{align} \label{wHJB}
\int_0^T \int_{\mathcal{M}} - \partial_t w \, \psi + \frac{\nu^2}{2} \, \nabla_\mathcal{M} w \cdot \nabla_\mathcal{M} \psi + {\bf b} \cdot \nabla_\mathcal{M} w \, \psi + c \, w \, \psi \,{\rm d}\pmb{m} \, {\rm d}t = 0\, .
\end{align}
\begin{thm}\label{thm:equiweak}
There exists a unique classical solution $w \in C^{1,2}\big([0,T] \times \mathcal{M}\big)$ of \eqref{lHJB} which is also the unique weak solution in $W^{1,2,2}\big([0,T]; H^1({\mathcal{M}}), H^{-1}({\mathcal{M}})\big)$. The function 
\begin{align} \label{eq:reverseHC}
W(t,\pmb{m}) = - \frac{1}{\beta}\log \big(w(t,\pmb{m})\big)
\end{align}
is the unique classical solution of the Bellman equation \eqref{HJB}.
\end{thm}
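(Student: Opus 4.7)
The plan is to first handle the linear backward parabolic problem \eqref{lHJB}, and then recover the HJB statement by inverting the Hopf-Cole transformation. After a time reversal $\tau = T - t$, the PDE \eqref{lHJB} becomes a forward parabolic Cauchy problem on the smooth compact manifold $\mathcal{M} = (\mathbb{S}^2)^N$. The principal part $-\tfrac{\nu^2}{2}\Delta_\mathcal{M}$ is uniformly elliptic, the drift ${\bf b}$ is polynomial in $\pmb{m}$ hence smooth, the potential $c(t,\pmb{m}) = \beta\delta \|\pmb{m}-\widetilde{\bf m}(t)\|^2$ is nonnegative and jointly $C^2$ (using $\widetilde{\bf m}\in C^2$), and the initial datum $\exp(-\beta h)$ lies in $C^2(\mathcal{M})$. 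Existence and uniqueness of a weak solution $w\in W^{1,2,2}([0,T];H^1(\mathcal{M}),H^{-1}(\mathcal{M}))$ then follow from a standard Galerkin construction in the eigenbasis of $\Delta_\mathcal{M}$ together with Gr\"onwall-based energy estimates. To upgrade to $w\in C^{1,2}$, I would appeal to parabolic Schauder theory, transferred from $\mathbb{R}^d$ to $\mathcal{M}$ via a finite smooth atlas and partition of unity; alternatively, a bootstrap via $H^k$-estimates for parabolic equations with smooth coefficients followed by Sobolev embedding on the finite-dimensional manifold yields the same conclusion.

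Next, I would establish strict positivity of $w$. Since $\exp(-\beta h) \geq \exp(-\beta \|h\|_{C(\mathcal{M})}) > 0$ and $c \geq 0$, the parabolic weak minimum principle gives a uniform lower bound $w \geq \exp\!\bigl(-\beta\|h\|_{C(\mathcal{M})} - T\|c\|_\infty\bigr) > 0$ on $[0,T]\times\mathcal{M}$; a Feynman-Kac argument, which will in any case be used in Section~\ref{sec:numerics}, provides an equivalent probabilistic justification. Consequently $W := -\beta^{-1}\log w$ is well-defined and belongs to $C^{1,2}([0,T]\times\mathcal{M})$. The derivative identities compiled in the correspondences table of Section~\ref{subsec:hjb}, combined with the choice $\beta = C_{\tt ext}^2(1+\alpha^2)/(\lambda\nu^2)$ tailored to cancel the quadratic term in $\nabla_\mathcal{M} W$, show upon direct substitution that $W$ solves \eqref{HJB} classically with $W(T,\cdot)=h$; here the crucial input is the isotropy identity \eqref{eq:pythagoras} which made the Hopf-Cole cancellation work in the first place.

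For uniqueness of the classical HJB solution, let $\widetilde W$ be any classical solution of \eqref{HJB} and set $\widetilde w := \exp(-\beta \widetilde W)$. A direct chain-rule computation (the forward Hopf-Cole transformation) shows $\widetilde w\in C^{1,2}$ solves \eqref{lHJB} pointwise with the correct terminal datum $\exp(-\beta h)$; since every classical solution of \eqref{lHJB} is a fortiori a weak solution, uniqueness for the linear problem forces $\widetilde w = w$ and hence $\widetilde W = W$. The same bijection between classical solutions in the two variables also yields uniqueness within the class of classical HJB solutions with positive exponential. I expect the main technical obstacle to be the clean transcription of parabolic Schauder or higher Sobolev estimates to the manifold $\mathcal{M}$ for the $C^{1,2}$ regularity upgrade; the positivity argument and the algebraic verification of the Hopf-Cole correspondence are essentially bookkeeping once the derivative correspondences in the text are in hand.
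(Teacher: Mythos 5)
Your proposal follows the same overall architecture as the paper's proof — construct the weak solution of the linear backward parabolic problem, upgrade to $C^{1,2}$ via a finite atlas, partition of unity and local parabolic regularity, and then transfer back and forth between \eqref{lHJB} and \eqref{HJB} through the Hopf--Cole bijection. The paper reaches the weak solvability by citing Roub\'{\i}\v{c}ek rather than redoing a Galerkin construction, but that is an implementation detail.

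The one genuine value-add in your argument, which the paper passes over silently, is the strict positivity of $w$. Going from a classical solution $W$ of \eqref{HJB} to $w=\exp(-\beta W)$ solving \eqref{lHJB} is automatic; going the other way, as the theorem requires, one needs $w>0$ on all of $[0,T]\times\mathcal{M}$ before $W=-\beta^{-1}\log w$ is even defined. Your minimum-principle (or Feynman--Kac) argument supplies exactly this, with the explicit lower bound $w\ge \exp\bigl(-\beta\|h\|_{C(\mathcal{M})}-T\|c\|_\infty\bigr)>0$, exploiting that $c\ge0$, the terminal datum is bounded away from zero, and $\mathcal{M}$ is compact without boundary so the parabolic boundary is only the terminal slice. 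This step should really appear in the paper's proof as well; without it the phrase "this implies \eqref{HJB} has a classical solution $W$" is not justified. Your uniqueness argument for \eqref{HJB} via the forward transform $\widetilde w=\exp(-\beta\widetilde W)$ and uniqueness for the linear problem is likewise correct and matches the paper's "if and only if" statement, once positivity has been settled.
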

\begin{proof}
The existence of a unique solution $w$ of \eqref{wHJB} in $W^{1,2,2}\big([0,T]; H^1({\mathcal{M}}), H^{-1}({\mathcal{M}})\big)$ is for instance given in \cite[p.~224]{Roubicek}. Using
charts of $\mathcal{M}$ and a partition of unity to represent \eqref{wHJB} locally on the flat space, and an application of parabolic regularity theory ensures that $w$ belongs
to $C^{1,2}\big([0,T] \times \mathcal{M}\big)$. This implies that the nonlinear HJB equation \eqref{HJB} has a classical solution $W$. Moreover, the above construction of the Hopf-Cole transformation directly ensures that a function $w \in C^{1,2}\big([0,T] \times \mathcal{M}\big)$ is a classical solution of \eqref{lHJB} if
and only if $W$ of \eqref{eq:reverseHC} solves \eqref{HJB} classically, which then implies the existence of a unique classical solution $W$, given by \eqref{eq:reverseHC}, of the HJB equation \eqref{HJB}.
\end{proof}
It is easy to see that additional smoothness of the terminal condition $h$ directly translates into additional regularity of $w$ and $W$.
\subsection{The verification theorem}\label{subsec:VW}
In this subsection, we show that $V = W$, i.e.,~the value function of the optimal control problem is equal to the solution of the above HJB equation \eqref{HJB}. The following verification theorem also 
provides an explicit formula for the optimal control, which inherits the smoothness of $\nabla_{\mathcal{M}} W$. 
\begin{thm}\label{thm:verification}
The value function $V(t,\pmb{m})$ in \eqref{eq:value-function} is the unique classical solution of the nonlinear HJB equation \eqref{HJB}:
\begin{align*}
V(t,\pmb{m}) = W(t,\pmb{m}) = - \frac{\log w(t,\pmb{m})}{\beta}, \qquad \forall\,(t,\pmb{m})\in [0,T]\times \mathcal{M}. 
\end{align*}
Problem~\ref{problem-1} admits a minimizer $({\bf m}^*, {\bf u}^*)\in \mathcal{U}_{\pmb{m}}^{\pmb{s}}[t,T]$ such that $\mathcal{J}\big(t, \pmb{m};({\bf m}^*, {\bf u}^*)\big)=V(t,\pmb{m})$ and $\mathbf{u}^*(s)=\bar{\mathbf{u}}\big(s,{\bf m}^*(s)\big)$, where
\begin{align}
 \bar{\mathbf{u}}(t,\pmb{m})= \frac{C_{\tt ext}}{\lambda} \Big(\pmb{m} \times \nabla_\mathcal{M} W(t,\pmb{m}) - \alpha\, \nabla_\mathcal{M} W(t,\pmb{m})\Big)\,.\label{eq:control-feedback}
\end{align}
\end{thm}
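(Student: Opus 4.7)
The plan is a standard verification argument, exploiting the classical regularity of $W$ established in Theorem~\ref{thm:equiweak}. First I would fix $(t,\pmb{m}) \in [0,T] \times \mathcal{M}$, pick an arbitrary admissible pair $({\bf m},{\bf u}) \in \mathcal{U}_{\pmb{m}}^{\pmb{s}}[t,T]$, and apply Dynkin's formula \eqref{eq:Dynkin} to $W \in C^{1,2}([0,T]\times \mathcal{M})$ along the trajectory ${\bf m}$. This is legitimate because $W$ is bounded on the compact manifold $[0,T]\times\mathcal{M}$, so the It\^o martingale appearing in \eqref{wSLLG} has zero expectation, and $W(T,\cdot)=h(\cdot)$ by the terminal condition in \eqref{HJB}.

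Next I would combine Dynkin's formula with the Bellman equation: by the pointwise inequality
\begin{align*}
-\partial_{{\bf f}(\pmb{m},\pmb{u})}W - \frac{\lambda}{2}\|\pmb{u}\|_{(\R^3)^N}^2 \;\leq\; \sup_{\pmb{u}'\in(\R^3)^N}\Bigl(-\partial_{{\bf f}(\pmb{m},\pmb{u}')}W - \frac{\lambda}{2}\|\pmb{u}'\|_{(\R^3)^N}^2\Bigr)
\end{align*}
and the identity $-\partial_t W + \mathcal{H} W = 0$ from \eqref{HJB}, one obtains $\mathcal{A}_1^{\pmb{u}} W(s,\pmb{m}) \geq -L(\pmb{m},\pmb{u})$ for every $\pmb{u}$. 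Substituting into \eqref{eq:Dynkin} and rearranging gives $W(t,\pmb{m}) \leq \mathcal{J}\bigl(t,\pmb{m};({\bf m},{\bf u})\bigr)$ for every admissible pair, hence $W \leq V$.

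For the reverse inequality I would construct the candidate optimal pair via the feedback \eqref{eq:control-feedback}. The supremum inside $\mathcal{H}$ is a strictly concave quadratic in $\pmb{u}$ whose unique maximizer is $\bar{\pmb{u}}(t,\pmb{m})=-\frac{C_{\tt ext}}{\lambda}\pmb{\Sigma}^\top(\pmb{m})\nabla_\mathcal{M} W(t,\pmb{m})$; the identity
\begin{align*}
-\pmb{\Sigma}^\top(\pmb{m})\nabla_\mathcal{M} W = \pmb{m}\times\nabla_\mathcal{M} W - \alpha\,\nabla_\mathcal{M} W
\end{align*}
derived earlier in the excerpt converts this to the stated formula \eqref{eq:control-feedback}. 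Because $W\in C^{1,2}([0,T]\times\mathcal{M})$ and $\mathcal{M}$ is compact, $\bar{\pmb{u}}$ is bounded and Lipschitz in $\pmb{m}$, uniformly in $t$. I would then invoke the well-posedness discussion following \eqref{SLLG} (with $\bar{\pmb{u}}\circ(\mathrm{id},{\bf m}^*)$ in place of the truncated control) to obtain a unique strong solution ${\bf m}^*$ of the closed-loop Stratonovich SDE with ${\bf m}^*(t)=\pmb{m}$, automatically taking values in $\mathcal{M}$; setting ${\bf u}^*(s):=\bar{\pmb{u}}(s,{\bf m}^*(s))$ produces an element of $L^2_{\{\mathcal{F}_s\}}(\Omega;L^2(t,T;(\R^3)^N))$ since $\bar{\pmb{u}}$ is bounded, so $({\bf m}^*,{\bf u}^*)\in\mathcal{U}_{\pmb{m}}^{\pmb{s}}[t,T]$.

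Along this particular pair the pointwise inequality above becomes an equality, so Dynkin's formula yields $W(t,\pmb{m}) = \mathcal{J}\bigl(t,\pmb{m};({\bf m}^*,{\bf u}^*)\bigr) \geq V(t,\pmb{m})$. Together with $W\leq V$ this gives $V=W$ and identifies $({\bf m}^*,{\bf u}^*)$ as a minimizer, proving both claims. The main obstacle I anticipate is the verification that $\bar{\pmb{u}}$ actually generates an admissible closed-loop solution in the sense required by Problem~\ref{problem-1}: one must confirm that the standard strong-existence machinery cited after \eqref{SLLG} indeed applies when the control feedback depends on ${\bf m}^*$ through a Lipschitz tangential gradient on the curved state space $\mathcal{M}$, and that ${\bf u}^*(s)=\bar{\pmb{u}}(s,{\bf m}^*(s))$ is genuinely $\{\mathcal{F}_s\}$-adapted; everything else is routine once the regularity of $W$ from Theorem~\ref{thm:equiweak} is in hand.
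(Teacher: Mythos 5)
Your proposal follows the same two-step verification argument as the paper: Step~1 combines Dynkin's formula with the inequality $\mathcal{A}_1^{\pmb{u}}W + L \geq 0$ (which follows from the Bellman equation because the Hamiltonian is a supremum over controls) to get $W \leq V$; Step~2 identifies the maximizing feedback $\bar{\pmb{u}} = -\frac{C_{\tt ext}}{\lambda}\pmb{\Sigma}^\top\nabla_\mathcal{M}W$, establishes that it is bounded and Lipschitz via the $C^{1,2}$-regularity of $W$ and compactness of $\mathcal{M}$, solves the closed-loop SDE, and closes the gap with equality in Dynkin's formula. This is precisely what the paper does; the obstacle you flag at the end (admissibility and adaptedness of the closed-loop pair) is handled in the paper exactly as you anticipate, by the Lipschitz bound on $\bar{\pmb{u}}$ (obtained from the mean-value theorem on $\mathcal{M}$, via Whitney extension or geodesics) together with pathwise uniqueness of the resulting Stratonovich SDE.
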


\begin{proof} The proof is divided into two steps. 
\vspace{.2cm}

 \noindent{\em Step 1:} First we show that $W \le V$. Let $({\bf m}, {\bf u})\in \mathcal{U}_{\pmb{m}}^{\pmb{s}}[t,T]$ be any admissible pair. Now, for any  $\psi \in C^{1,2}\big([0,T) \times \mathcal{M}\big)$, we have, thanks to the definition of the Hamiltonian $\mathcal{H}$, \eqref{eq:lagrangian}, and \eqref{eq:generator},
\begin{align} 
 \mathcal{H} \psi(t,\pmb{m}) & \ge - \frac{\nu^2}{2} \, \Delta_{\mathcal{M}} \psi(t,\pmb{m}) - \delta \| \pmb{m} - \widetilde{\bf m} \|_{(\R^3)^N}^2 - \partial_{{\bf f}(\pmb{m},\pmb{u})} \psi(t,\pmb{m}) - \frac{\lambda}{2} \, \| \pmb{u} \|_{(\R^3)^N}^2 \notag \\
 & = - L( \pmb{m}, \pmb{u}) - \frac{\nu^2}{2} \, \Delta_{\mathcal{M}} \psi(t,\pmb{m}) - \partial_{{\bf f}(\pmb{m},\pmb{u})} \psi(t,\pmb{m}) \notag \\
  & = \partial_t \psi(t,\pmb{m}) - \mathcal{A}_1^{\pmb{u}} \psi(t,\pmb{m}) - L(\pmb{m}, \pmb{u})\,, \label{verification_identity}
\end{align}
and therefore one has 
\begin{align}
-\partial_t \psi(t,\pmb{m}) + \mathcal{H} \psi(t,\pmb{m}) \ge  - \mathcal{A}_1^{\pmb{u}} \psi(t,\pmb{m}) - L(\pmb{m}, \pmb{u})\,. \label{eq:gen}
\end{align}
Because $W$ is a smooth classical solution, we may substitute $\psi$ by $W$, in which case the left-hand side of \eqref{eq:gen} vanishes. In other words,
\begin{align}
\mathcal{A}_1^{\mathbf{u}(s)} W\big( s,{\bf m}(s)\big) + L\big({\bf m}(s), \mathbf{u}(s)\big) \ge 0\,. \label{eq:Lagbound}
\end{align}
Indeed, the existence of a classical solution $W$ avoids a more complicated construction to arrive at a bound like \eqref{eq:Lagbound}, which would be necessary in a
setting with viscosity solutions.  Applying now Dynkin's formula \eqref{eq:Dynkin} with $W$ in place of $\psi$ and recalling the final time conditions, we conclude from \eqref{eq:Lagbound},
\begin{align} \nonumber
W(t,\pmb{m}) &= \mathbb{E}_{t,\pmb{m}} \Bigl[\int_t^T - \mathcal{A}_1^{{\bf u}(r)} W\big(r,{\bf m}(r)\big) \,{\rm d}r + W\big(T, {\bf m}(T)\big) \Bigr] \\
&\le \mathbb{E}_{t,\pmb{m}} \Bigl[ \int_t^T L\big({\bf m}(r), {\bf u}(r)\big) \,{\rm d} r +h\big({\bf m}(T)\big) \Bigr] = \mathcal{J}\big(t,\,\pmb{m};({\bf m},{\bf u})\big)\, .  \label{eq:Dynkin_verification} 
\end{align}
Because this result holds for all admissible pairs $ ({\bf m},{\bf u}) \in \mathcal{U}_{\pmb{m}}^{\pmb{s}}[t,T]$, it follows that $W \le V$.
\vspace{.2cm}

\noindent{\em Step 2:} Now we show that there exists an admissible pair $ ({\bf m}^*, {\bf u}^*) \in \mathcal{U}_{\pmb{m}}^{\pmb{s}}[t,T]$ such that $W(t,\pmb{m})= \mathcal{J}\big(t,\,\pmb{m};({\bf m}^*, {\bf u}^*) \big)$. Recalling \eqref{Legendre}, we find that the supremum in the definition 
of the Hamilton-Jacobi-Bellman operator is attained by the control
\begin{align*}
\bar{\mathbf{u}}(s,\pmb{m}) := - \frac{C_{\tt ext}}{\lambda}\pmb{\Sigma}^\top(\pmb{m}) \, \nabla_\mathcal{M} W(s,\pmb{m}) = \frac{C_{\tt ext}}{\lambda}\Big(\pmb{m} \times \nabla_\mathcal{M} W(s,\pmb{m}) - \alpha \nabla_\mathcal{M} W(s,\pmb{m})\Big)\,.
\end{align*}
Since $W$ is a $C^{1,2}$-solution of \eqref{HJB}, we see that $\bar{\mathbf{u}}$ is continuously differentiable and bounded on $[0,T] \times \mathcal{M}$. Moreover, $\mathcal{M}\ni {\pmb m}\mapsto \bar{\bf u}(t,\pmb{m})$ is Lipschitz. Indeed, for any $\pmb{m}_1,\, \pmb{m}_2 \in \mathcal{M}$
\begin{align*}
& \| \bar{\bf u}(t, \pmb{m}_1)- \bar{\bf u}(t, \pmb{m}_2)\|_{(\R^3)^N} \\
&= \frac{C_{\tt ext}}{\lambda}\big\| 
(\pmb{m}_1 - \pmb{m}_2)\times \nabla_{\mathcal{M}}W(t,\pmb{m}_1) + \pmb{m}_2 \times \big(\nabla_{\mathcal{M}}W(t,\pmb{m}_1)- \nabla_{\mathcal{M}}W(t,\pmb{m}_2)\big) \\
& \hspace{2cm}- \alpha \big(\nabla_{\mathcal{M}}W(t,\pmb{m}_1)- \nabla_{\mathcal{M}}W(t,\pmb{m}_2)\big) \big\|_{(\R^3)^N}\,.
\end{align*}
Since $W\in C^{1,2}\big( [0,T]\times \mathcal{M}\big)$, by the mean-value theorem, either applied 
in combination with Whitney's extension theorem \cite[Section 6.5]{EG} in the ambient space $(\R^3)^N$ or directly to the geodesics of $\mathcal{M}$, we have
\begin{align*}
\|\nabla_{\mathcal{M}} W(t,\pmb{m}_1)- \nabla_{\mathcal{M}}W(t,\pmb{m}_2)\|_{(\R^3)^N} \le C\|\pmb{m}_1 - \pmb{m}_2\|_{(\R^3)^N}\,.
\end{align*}
Thus, 
\begin{align*}
\| \bar{\bf u}(t, \pmb{m}_1)- \bar{\bf u}(t, \pmb{m}_2)\|_{(\R^3)^N}  \le  C \|\pmb{m}_1 - \pmb{m}_2\|_{(\R^3)^N} \quad \forall\, \pmb{m}_1,\, \pmb{m}_2 \in \mathcal{M}\,.
\end{align*}
Now, on the given stochastic basis $(\Omega, \mathbb{P}, \mathcal{F},\{\mathcal{F}_s\}_{t \le s \le T})$ and for the $\{\mathcal{F}_s\}_{t \le s \le T}$-adapted  Brownian motion ${\bf W}(s)$, the stochastic differential equation 
\begin{align*} 
{\rm d} {\bf m}^*(s) ={\bf f}({\bf m}^*(s), \bar{\bf u}(s,{\bf m}^*(s)))\,{\rm d}s + \nu \, \pmb{\sigma}({\bf m}^*(s)) \circ \,{\rm d}\mathbf{W}^*(s)\qquad s\in (t, T]\,, \quad 
{\bf m}^*(t) = \pmb{m}\, 
\end{align*}
has a pathwise unique $\mathcal{M}$-valued solution. 
 Then the process $$ {\bf u}^*=\mathbf{u}^*(s) := \bigl\{\bar{\mathbf{u}}(s, {\bf m}^*(s));\, t \le s \le T\bigr\}$$ belongs to $ L^2_{\{{\mathcal F}_{s}\}}\bigl( \Omega; L^2( t,T; (\R^3)^N)\bigr)$. Thus $({\bf m}^*, {\bf u}^*) \in \mathcal{U}_{\pmb{m}}^{\pmb{s}}[t,T]$. 
With this admissible pair $({\bf m}^*, {\bf u}^*)$, the inequality in \eqref{verification_identity} turns into equality. Again, by using Dynkin's formula along with initial data ${\bf m}^*(t)=\pmb{m}$, we see that 
\begin{align*}
 W(t,\pmb{m})= \mathbb{E}_{t,\pmb{m}} \Bigl[ \int_t^T L\big({\bf m}^*(r), \mathbf{u}^*(r)\big) \,{\rm d}r + h\big({\bf m}^*(T)\big) \Bigr]= \mathcal{J}\big(t,\, \pmb{m}; ({\bf m}^*, {\bf u}^*)\big)= V(t,\pmb{m})\, .
\end{align*}
Recalling that by Theorem \ref{thm:equiweak} the HJB equation \eqref{HJB} has a unique solution $W$, we conclude from the above that $V = W$. 
\end{proof}

Now we show the uniqueness of the optimal admissible pair $({\bf m}^*, {\bf u}^*)$, and thus in particular of the strong solution of Problem~\ref{problem-1}. We remark that the uniqueness of the optimal admissible pair is not automatically provided from the uniqueness of solutions of the HJB equation, which instead was used to characterize
the value function through the differential equation~\eqref{SLLG}. 

\begin{thm}\label{thm:uniqueness-optimal-pair}
The pair $({\bf m}^*, {\bf u}^*) \in \mathcal{U}_{\pmb{m}}^{\pmb{s}}[t,T]$ constructed in Theorem \ref{thm:verification} is the unique minimizer of $\mathcal{J}\big(t,\pmb{m}; (\cdot,\cdot)\big)$ in the sense that if there exists any other optimal pair $({\bf m}_1^*, {\bf u}_1^*) \in \mathcal{U}_{\pmb{m}}^{\pmb{s}}[t,T]$, then $
{\bf m}^*(s)= {\bf m}_1^*(s)$ and ${\bf u}^*(s)={\bf u}_1^*(s)$ for a.e.\,$s\in [t,T],\,\mathbb{P}$-a.s.
\end{thm}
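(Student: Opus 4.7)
The plan is to retrace the verification argument with an arbitrary optimal pair, force the key pointwise inequality to be an equality, then exploit strict concavity of the Hamiltonian in $\pmb{u}$ together with pathwise uniqueness of the closed-loop SDE. Concretely, let $({\bf m}_1^*, {\bf u}_1^*) \in \mathcal{U}_{\pmb{m}}^{\pmb{s}}[t,T]$ be any minimizer, so $\mathcal{J}(t,\pmb{m}; ({\bf m}_1^*, {\bf u}_1^*)) = V(t,\pmb{m}) = W(t,\pmb{m})$. I would apply Dynkin's formula \eqref{eq:Dynkin} with $\psi = W$ along $({\bf m}_1^*, {\bf u}_1^*)$; because $W \in C^{1,2}([0,T]\times \mathcal{M})$ solves \eqref{HJB} with terminal condition $h$, this reproduces the chain \eqref{eq:Dynkin_verification}, but now with equality throughout. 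Subtracting then gives
\begin{align*}
\mathbb{E}_{t,\pmb{m}}\Big[ \int_t^T \big( \mathcal{A}_1^{{\bf u}_1^*(r)} W(r, {\bf m}_1^*(r)) + L({\bf m}_1^*(r), {\bf u}_1^*(r)) \big)\, {\rm d}r \Big] = 0\,,
\end{align*}
while the integrand is $\mathrm{d}\mathbb{P}\otimes \mathrm{d}r$-a.e. nonnegative by \eqref{verification_identity}. Hence the integrand vanishes for $\mathrm{d}\mathbb{P}\otimes \mathrm{d}r$-a.e. $(\omega, r)$, i.e., the supremum in the definition of $\mathcal{H}W(r,{\bf m}_1^*(r))$ is attained at ${\bf u}_1^*(r)$.

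Next I would use that, for any fixed $\pmb{m} \in \mathcal{M}$, the map
\begin{align*}
\pmb{u} \mapsto -\partial_{{\bf f}(\pmb{m},\pmb{u})} W(r,\pmb{m}) - \frac{\lambda}{2}\|\pmb{u}\|_{(\R^3)^N}^2
\end{align*}
is strictly concave (the drift ${\bf f}$ is affine in $\pmb{u}$ by \eqref{eq:expression-drift}, and $\lambda > 0$). Its maximum is therefore attained at a unique point, which is exactly the feedback $\bar{\bf u}(r, \pmb{m})$ given by \eqref{eq:control-feedback}. Combined with the preceding almost-everywhere equality, this yields
\begin{align*}
{\bf u}_1^*(r) = \bar{\bf u}\big(r, {\bf m}_1^*(r)\big) \qquad \text{for } \mathrm{d}\mathbb{P}\otimes \mathrm{d}r\text{-a.e.\ } (\omega, r) \in \Omega \times [t,T]\,.
\end{align*}

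Plugging this back into \eqref{SLLG} shows that ${\bf m}_1^*$ satisfies the same closed-loop SDE as ${\bf m}^*$, driven by the same Brownian motion ${\bf W}$ with the same initial datum $\pmb{m}$. Since $\bar{\bf u}$ was shown to be Lipschitz on $[0,T] \times \mathcal{M}$ in the proof of Theorem~\ref{thm:verification}, the composed drift $\pmb{m} \mapsto {\bf f}(\pmb{m}, \bar{\bf u}(s,\pmb{m}))$ is Lipschitz and the diffusion $\pmb{\sigma}$ is smooth; pathwise uniqueness of the resulting Stratonovich SDE then forces ${\bf m}_1^*(s) = {\bf m}^*(s)$ for all $s \in [t,T]$, $\mathbb{P}$-a.s. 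Finally, the feedback identity above gives ${\bf u}_1^*(s) = \bar{\bf u}(s, {\bf m}_1^*(s)) = \bar{\bf u}(s, {\bf m}^*(s)) = {\bf u}^*(s)$ for a.e.\ $s \in [t,T]$, $\mathbb{P}$-a.s.

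The main obstacle I anticipate is the passage from the integrated equality to the pointwise identification ${\bf u}_1^* = \bar{\bf u}(\cdot, {\bf m}_1^*)$: this relies on strict concavity of the Hamiltonian objective and a Fubini argument, and must be done without losing the $\{\mathcal{F}_s\}$-adaptedness of the control. Once that identification is in hand, the rest of the argument is a routine application of pathwise uniqueness of the closed-loop SDE, for which the Lipschitz regularity of $\bar{\bf u}$ established in Theorem~\ref{thm:verification} is precisely what is needed.
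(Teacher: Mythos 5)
Your proposal matches the paper's argument in essence: Step 1 (forcing equality in the verification inequality to conclude that every optimal control has the feedback form \eqref{eq:optimal-control-uniqueness}, with strict concavity guaranteeing the unique maximizer) is exactly the paper's Step 1, and Step 2 then identifies the two state processes via uniqueness of the closed-loop SDE. The only difference is that where you invoke pathwise uniqueness abstractly from the Lipschitz regularity of $\bar{\bf u}$ established in Theorem~\ref{thm:verification}, the paper carries out the corresponding It\^{o}--Gronwall estimate on $\|\,{\bf m}^* - {\bf m}_1^*\|^2_{(\R^3)^N}$ explicitly, so the two are the same route at different levels of detail.
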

\begin{proof}
\green{\noindent{\em Step 1:} Let $({\bf m}^*, {\bf u}^*) \in \mathcal{U}_{\pmb{m}}^{\pmb{s}}[t,T]$ be an optimal pair. Similar to \cite[Chapter~$5$, Theorem~$5.1$]{yong-zhou}, we note that then \eqref{eq:Dynkin_verification} holds as equality with $({\bf m}^*, {\bf u}^*)$ in place of $({\bf m}, {\bf u})$. This implies that also \eqref{verification_identity} holds with equality for a.e.~$s\in [t,T]$, $\mathbb{P}$-a.s. Hence $({\bf m}^*, {\bf u}^*)$ and thus every optimal pair satisfies
\begin{align}\label{eq:optimal-control-uniqueness}
{\bf u}^*(s) &=\frac{C_{\tt ext}}{\lambda}\Big({\bf m}^*(s) \times \nabla_\mathcal{M} V(s,{\bf m}^*(s)) - \alpha\, \nabla_\mathcal{M} V(s,{\bf m}^*(s))\Big)\,,
\end{align}
for a.e.~$s\in [t,T]$, $\mathbb{P}$-a.s.}
\vspace{.1cm}

\green{\noindent{\em Step 2:}} Suppose there exists another optimal pair $({\bf m}_1^*, {\bf u}_1^*) \in \mathcal{U}_{\pmb{m}}^{\pmb{s}}[t,T]$ and let $\widetilde{\bf m}^*= {\bf m}^* - {\bf m}_1^*$. Then $\widetilde{\bf m}^*$ is a strong solution to the following SDE: for  $s\in (t,T]$
\begin{align}
{\rm d}\widetilde{\bf m}^*(s) &= \Big\{ \big(\pmb{\Sigma}({\bf m}^*(s))-\pmb{\Sigma}({\bf m}_1^*(s))\big)\big[ -{\bf Q}{\bf m}_1^*(s) + C_{\tt ext}{\bf u}_1^*(s)\big] + \pmb{\Sigma}({\bf m}^*(s))\big[ -{\bf Q}\widetilde{\bf m}^*(s) \notag \\
 &\hspace{1cm}+ C_{\tt ext} \big({\bf u}^*(s)-{\bf u}_1^*(s)\big)\big] \Big\}\,{\rm d}s
+ \nu\,\big( \pmb{\sigma}({\bf m}^*(s))-\pmb{\sigma}({\bf m}_1^*(s)) \big) \circ \,{\rm d} {\bf W}(s) \label{eq:sde-for-uni-optimal}
\end{align} 
with $\widetilde{\bf m}^*(t)={\bf 0}$. In view of \eqref{defi:sigma}, we observe that
\begin{align*}
\pmb{\Sigma}({\bf m}^*(s))-\pmb{\Sigma}({\bf m}_1^*(s))&= \big( \pmb{\sigma}({\bf m}^*(s))-\pmb{\sigma}({\bf m}_1^*(s)) \big) - \alpha\, \pmb{\sigma}({\bf m}^*(s)) \big( \pmb{\sigma}({\bf m}^*(s))-\pmb{\sigma}({\bf m}_1^*(s)) \big) \\
& \hspace{1cm} -\alpha\,\big(\pmb{\sigma}({\bf m}^*(s))- \pmb{\sigma}({\bf m}_1^*(s))\big)  \pmb{\sigma}({\bf m}_1^*(s)) \\
& = \big[{\bf Id}- \alpha\,  \pmb{\sigma}({\bf m}^*(s))\big] \pmb{\sigma}(\widetilde{\bf m}^*(s)) -\alpha \, \pmb{\sigma}(\widetilde{\bf m}^*(s)) \pmb{\sigma}({\bf m}_1^*(s))\,.
\end{align*}
Thus, the equation \eqref{eq:sde-for-uni-optimal} reduces to
\begin{align*}
{\rm d}\widetilde{\bf m}^*(s) &= \Big[\pmb{\Sigma}({\bf m}^*(s))\big[ -{\bf Q}\widetilde{\bf m}^*(s) + C_{\tt ext} \big({\bf u}^*(s)-{\bf u}_1^*(s)\big)\big] +  \Big\{\big[{\bf Id}- \alpha\,  \pmb{\sigma}({\bf m}^*(s))\big] \pmb{\sigma}(\widetilde{\bf m}^*(s)) \notag \\
& \quad-\alpha \, \pmb{\sigma}(\widetilde{\bf m}^*(s)) \pmb{\sigma}({\bf m}_1^*(s))\Big\}\big( -{\bf Q}{\bf m}_1^*(s) + C_{\tt ext} {\bf u}_1^*(s)\big)\Big]\,{\rm d}s + \nu\, \pmb{\sigma}(\widetilde{\bf m}^*(s)) \circ \,{\rm d} {\bf W}(s)\,, \\
\widetilde{\bf m}^*(t)&={\bf 0}\,.
\end{align*} 
We now apply It\^{o}'s formula to the functional ${\bf x}\mapsto \|{\bf x}\|_{(\R^3)^N}^2$ for the above equation, and then use \eqref{eq:optimal-control-uniqueness} to have
\begin{align*}
\mathbb{E}\big[\|\widetilde{\bf m}^*(s)\|_{(\R^3)^N}^2\big] \le  C\int_t^s \mathbb{E}\big[\|\widetilde{\bf m}^*(r)\|_{(\R^3)^N}^2\big]\,{\rm d}r
\end{align*}
for some constant $C>0$. An application of Gronwall's lemma then implies $\mathbb{P}$-a.s., 
${\bf m}^*(s)={\bf m}_1^*(s)$, and therefore from \eqref{eq:optimal-control-uniqueness} we get ${\bf u}^*(s)={\bf u}_1^*(s)$ for a.e.\,$s\in [t,T]$. Thus, the optimal control problem admits a unique strong solution, which is an improvement over \cite{dunst_prohl}.
\end{proof}

\begin{rem}\label{rem:optimal-pair-orthogonality}
In \cite{dunst_prohl}, $\mathbb{P}$-a.s.~the orthogonality of an optimal state and control was shown both theoretically and numerically. In our approach, we also have $\mathbb{P}$-a.s.~the orthogonality of ${\bf m}^*$ and ${\bf u}^*$. Indeed, by using the vector identity 
$\langle {\bf a}, {\bf a} \times {\bf b}\rangle =0$ for any ${\bf a}, {\bf b}\in (\R^3)^N$, and the fact that ${\pmb m}\in \mathcal{M}$ and $\nabla_{\mathcal{M}}W(\cdot,\pmb{m})$ are orthogonal, we have $\mathbb{P}$-a.s., from \eqref{eq:control-feedback}
\begin{align*}
\langle {\bf m}^*(s), {\bf u}^*(s)\rangle= \frac{C_{\tt ext}}{\lambda} \big\langle {\bf m}^*(s),\, {\bf m}^*(s) \times \nabla_\mathcal{M} W(s,{\bf m}^*(s)) - \alpha \nabla_\mathcal{M} W(s,{\bf m}^*(s))\big\rangle=0\,.
\end{align*}
For its computational evidence, see Figure~\ref{fig:state-control-3-spin} for an ensemble of $N=3$ particles. 

\end{rem}


\section{Probabilistic representation of the value function} \label{sec:probRep}
To solve the linear PDE \eqref{lHJB} numerically by deterministic methods is still demanding since it is posed on $\mathcal{M}\subset (\R^3)^N$. Therefore, we choose a probabilistic representation of the solution of \eqref{lHJB} which requires to solve
the following forward stochastic differential equation, defined on a given stochastic basis $(\Omega, \mathbb{P}, \mathcal{F},\{\mathcal{F}_s\}_{t \le s \le T})$ with $3N$-dimensional Brownian motion $\mathbf{W}$,

\begin{equation} \label{eq:auxiliary-sllg}
\begin{aligned}
{\rm d}\pmb{\mathfrak m}(s)&= -{\bf b}\big(\pmb{\mathfrak m}(s)\big)\,{\rm d}s + \nu\, \pmb{\sigma}\big(\pmb{\mathfrak m}(s)\big)  \circ \,{\rm d} {\bf W}(s) \quad t< s\le T\,, \\
\pmb{\mathfrak m}(t)&= \pmb{m}\in \mathcal{M}\,,
\end{aligned}
\end{equation}
where ${\bf b}(\cdot)$ is defined in \eqref{defi:b}.
Equation \eqref{eq:auxiliary-sllg} has a strong solution $\pmb{\mathfrak m}$ taking values in $\mathcal{M}$. 
Let $G(s)= \displaystyle \exp\Big(-\int_t^s R\big(r, \pmb{\mathfrak m}(r)\big)\,{\rm d}r\Big)$
where $\displaystyle R(r,\pmb{\mathfrak m}(r))= \beta\,\delta \| \pmb{\mathfrak m}(r) - \widetilde {\bf m}(r) \|_{(\R^3)^N}^2$. By using the It\^{o} product rule applied to $G(s)\,w\big(s, \pmb{\mathfrak m}(s)\big)$, where $w$ is the classical solution of the linear parabolic PDE \eqref{lHJB}, 
we arrive at the following Feynman-Kac representation ~\cite[Theorem $7.6$]{karatzas}
for the solution of \eqref{lHJB} with terminal datum $ \exp\big( -\beta\, h(\pmb{m})\big)$: 
\begin{align}
w(t,\pmb{m})= \mathbb{E}_{t,\pmb{m}}\Big[ \exp\big( -\beta\, h(\pmb{\mathfrak m}(T))\big) \exp\Big(- \frac{\delta\,C_{\tt ext}^2(1+\alpha^2)}{\lambda \nu^2} \int_t^T \|\pmb{\mathfrak m}(r)-\widetilde {\bf m}(r) \|_{(\R^3)^N}^2\,{\rm d}r\Big) \Big]\,. \label{eq:prob-represnt-lhjb}
\end{align}
\green{We note that because of the linearity of \eqref{lHJB} the Feynman-Kac representation can be used in place of a backward SDE.}

\subsection{ A numerical scheme for \eqref{eq:auxiliary-sllg}}\label{subsec:eq:auxiliary-sllg}
  To approximate the solution $\pmb{\mathfrak m}$ of \eqref{eq:auxiliary-sllg}, we use the semi-implicit method proposed in \cite{mentink}. Now ${\bf b}(\pmb{m})$ from \eqref{defi:b} can be re-written as
\begin{align*}
{\bf b}(\pmb{m})= \pmb{m} \times {\bf Q} \pmb{m} - \alpha \, \pmb{m} \times \big(  \pmb{m} \times {\bf Q}\pmb{m}\big)\,,
\end{align*}
where ${\bf Q}$ is defined in \eqref{eq:Q}.
Let $T>0$ be fixed. For $J\in \mathbb{N}$, let $I_J^0:= \{ t_j\}_{j=0}^J$ be a partition of $[0,T]$ with time step size $ \displaystyle \tau= \green{T/J} >0$.  Let  $I_J^{\ell} \subset I_J^0$ be the sub-partition on $[t_{\ell},T]$, where $\ell \in \{0,1,\ldots, J-1 \}$. Let $\{\pmb{\xi}^j\}_{j=\ell}^J$ is a $({\mathbb R}^{3})^N$-valued random walk of $I_J^{\ell}$ with $\pmb{\xi}^j := (\xi^j_1, \ldots, \xi^j_{N})$, where~($1\le i\le N$)  $\xi^j_i=\big( \xi^j_{i,l}\big)_{1\le l\le 3}$ are i.i.d. ${\mathbb R}^3$-valued~(discrete) random variables such that each
 \begin{itemize}
 \item [i)] ${\xi}^j_{i,l}$ satisfies $\mathbb{E}\big[{\xi}^j_{i,l}\big]=0$ and 
 $\mathbb{E}\big[\big|{\xi}^j_{i,l}\big|^2\big|\big]=\tau$,
 \item [ii)] for every integer $p\ge 1$, there exists $C_p>0$ such that $\mathbb{E}\big[ |{\xi}^j_{i,l}|^{2p}\big] \le C_p \tau^p$.
 \end{itemize} Let now $\ell$ be fixed and $\pmb{\mathfrak m}^{\ell}= \pmb{m}\in \mathcal{M}$ be given. We determine the 
$\mathcal{M}$-valued random variables $\{\pmb{\mathfrak m}^j \}_{j=\ell}^J$ via  $\big(i=1,2,\ldots, N\big)$
\begin{subequations}
 \begin{align}
 {\mathfrak e}_i^j &= {\mathfrak m}_i^j + \tau\, \frac{{\mathfrak e}_i^j + {\mathfrak m}_i^j}{2} \times \bar{\tt a}_i\big(\pmb{\mathfrak m}^j\big) + \nu\, \frac{{\mathfrak e}_i^j + {\mathfrak m}_i^j}{2} \times \xi_i^j, \label{eq:scheme-auxi-prob-sde-0-intermediate}  \\
 {\mathfrak m}_i^{j+1}&= {\mathfrak m}_i^j + \tau \,\frac{{\mathfrak m}_i^{j+1} + {\mathfrak m}_i^j}{2} \times \bar{\tt a}_i\big(\frac{\pmb{\mathfrak m}^j + \pmb{\mathfrak e}^j}{2}\big)  + \nu\, \frac{{\mathfrak m}_i^{j+1} + {\mathfrak m}_i^j}{2} \times \xi_i^j\,,  \label{eq:scheme-auxi-prob-sde-0-iteration}
 \end{align}
 \end{subequations}
 where $\pmb{\mathfrak e}^j=\big({\mathfrak e}_1^j, {\mathfrak e}_2^j,\ldots, {\mathfrak e}_N^j\big)$, and the function $\bar{\tt a}_i$ for each $i=1,2,\ldots, N$ is given by 
 \begin{align*}
 \bar{\tt a}_i(\pmb{\mathfrak m}):= -({\bf Q}\pmb{\mathfrak m})_i + \alpha\, {\mathfrak m}_i \times ({\bf Q}\pmb{\mathfrak m})_i\,.
 \end{align*}

Note that \eqref{eq:scheme-auxi-prob-sde-0-intermediate}-\eqref{eq:scheme-auxi-prob-sde-0-iteration} is a system of linear equations, leading to short simulation times. \green{Furthermore, the numerical schemes ensures $\pmb{\mathfrak m}^j$ takes values on $\mathcal{M}$. This is exploited when applying arguments from \cite{banas_brzezniak_neklyudov_prohl} in this finite ensemble setting} to conclude convergence of iterates $\{\pmb{\mathfrak m}^j \}_{j=\ell +1}^J$ towards a weak solution of \eqref{eq:auxiliary-sllg} for $\tau \rightarrow 0$.

\section{An algorithm to approximate an optimal pair $({\bf m}^*, {\bf u}^*)$}\label{sec:numerics}
In order to simulate the optimal pair $({\bf m^*}, {\bf u}^*)$, we need to solve the equations 
\eqref{eq:auxiliary-sllg}, \eqref{eq:control-feedback} and \eqref{SLLG} with the function $\bar{\bf u}(s,\cdot)$ numerically. 

\subsection{HJB solution}\label{subsec:numeric-hjb}
 The classical solution $w$ of \eqref{lHJB} is given by \eqref{eq:prob-represnt-lhjb}. In order to approximate it, and hence the classical solution $W$ of the nonlinear HJB equation \eqref{HJB}, we proceed as follows:
 \begin{itemize}
 \item [a)] Compute all the iterates $\{\pmb{\mathfrak m}^j\}_{j=\ell}^J$ via \eqref{eq:scheme-auxi-prob-sde-0-intermediate}-\eqref{eq:scheme-auxi-prob-sde-0-iteration} along $I_J^{\ell}$ and store them.
 \item [b)] Approximate the integral in~\eqref{eq:prob-represnt-lhjb} by Gauss-Legendre quadrature ~\cite[Section $10.3$]{krylov}, where we use the piecewise affine interpolation of the iterates $\{\pmb{\mathfrak m}^j\}_{j=\ell}^J$ via
 \begin{align}
 \pmb{\mathfrak m}(r):= \frac{r-t_j}{\tau} \pmb{\mathfrak m}^{j+1} + \frac{t_{j+1}-r}{\tau}\pmb{\mathfrak m}^j \qquad  \big(r\in [t_j, t_{j+1})\big)\,.\label{eq:intermideate-point}
 \end{align}
 \item[c)]Since $\widetilde{\bf m}\in C^2([t_{\ell},T]; \mathcal{M})$, we use the  piecewise affine interpolation $\widetilde{\bf m}(r)$ of the iterates
 $\widetilde{\bf m}^j\equiv \widetilde{\bf m}(t_j)$.
 \item[d)] Approximate $\mathbb{E}_{t,\pmb{m}}$ in \eqref{eq:prob-represnt-lhjb}
 via Monte-Carlo estimation along with the variance reduction method of {\em antithetic variates}~(see e.g.~\cite[Subsection $4.2$]{glasserman}).
 \end{itemize}
 Thus, we can simulate the quantities $w(t_{\ell}, \pmb{\mathfrak m}^\ell)$, and hence 
 $W(t_{\ell}, \pmb{\mathfrak m}^\ell)= \displaystyle -\frac{1}{\beta} \log(w(t_{\ell}, \pmb{\mathfrak m}^\ell))$.
 \subsection{Optimal feedback transformation}\label{subsec:method-a-b} To approximate the function $\bar{\bf u}$ at any point $(t,\pmb{m})$, we need to approximate $\nabla_\mathcal{M} W(t,\pmb{m})$,  which again demands to
 approximate $\nabla_\mathcal{M} w(t,\pmb{m})$ thanks to the Hopf-Cole transformation. For the latter, we may proceed in two different ways:
 \begin{itemize}
  \item [i)]{\bf Method A}: We take the expectation first and then use the central difference quotient to approximate the tangential gradient. More precisely, for any $\pmb{m}=\big({\it m}_1,{\it m}_2,\ldots, {\it m}_N\big)\in \mathcal{M}$ and $\bar{h}>0$, define for $i=1,2,\ldots, N$,
  \begin{equation}\label{eq:diff-quotient-point}
   \begin{aligned}
   \pmb{m}_{\bar{h},i,l}^+: = \Big( {\it m}_1, \ldots, \frac{{\it m}_i + \bar{h} {\bf e}_l}{\|{\it m}_i + \bar{h} {\bf e}_l\|_{\R^3}}, \ldots, {\it m}_N\Big)\,,\\ \pmb{m}_{\bar{h},i,l}^-: = \Big( {\it m}_1, \ldots, \frac{{\it m}_i - \bar{h}  {\bf e}_l}{\|{\it m}_i - \bar{h}{\bf e}_l\|_{\R^3}}, \ldots, {\it m}_N\Big)\,,
   \end{aligned} 
   \end{equation}
   where ${\bf e}_l$ is the $l$-th identity vector in $\R^3,\, 1\le l\le 3$. 
   Recall that, by using Subsection~\ref{subsec:numeric-hjb}, we can calculate $w(t,  \pmb{m}_{\bar{h},i,l}^+)$ and $w(t,  \pmb{m}_{\bar{h},i,l}^-)$ for any $t\in [0,T]$;
   therefore, we approximate $\nabla_\mathcal{M} w(t,\pmb{m})$ by $
    \nabla_\mathcal{M} w(t,\pmb{m})\simeq \Big( \partial_1 w(t,\pmb{m}),\, \ldots,\, \partial_N w(t,\pmb{m})\Big)$, where for any $i=1,2,\ldots, N$, and $l=1,2,3$,
    \begin{align*}
    &\partial_i w(t,\pmb{m}):= \Big(d_{1,i} w(t, \pmb{m}),\, d_{2,i} w(t, \pmb{m}),\, d_{3,i}w(t, \pmb{m})\Big)^\top\,, \\
   & d_{l,i} w(t,\pmb{m}):= \frac{1}{2\bar{h}}\Big[ w(t,\pmb{m}_{\bar{h},i,l}^+ )- w(t, \pmb{m}_{\bar{h},i,l}^-)\Big]\,.
   \end{align*} 
   Hence, we approximate $\nabla_{\mathcal{M}}W(t_{\ell},\pmb{\mathfrak m}^\ell)$ by
   \begin{align}
       \nabla_\mathcal{M} W(t_{\ell},\pmb{\mathfrak m}^{\ell})\simeq -\frac{1}{\beta w(t_{\ell},\pmb{\mathfrak m}^{\ell})}\Big( \partial_1 w(t_{\ell},\pmb{\mathfrak m}^{\ell}),\, \ldots,\, \partial_N w(t_{\ell},\pmb{\mathfrak m}^{\ell})\Big)\,.\label{aprox:method-a-gradient}
   \end{align}
  \item [ii)]{\bf Method B:} In contrast to {\bf Method A},  we first use the central difference quotient and then take the expectation to approximate the gradient $\nabla_\mathcal{M} w$. 
   For all $(t, \pmb{m})\in [0,T)\times \mathcal{M}$, we define \green{the random variable}
   \begin{align}
   H(t,{\pmb{m}}):= \exp\big( -\beta\, h(\pmb{\mathfrak m}(T))\big)\exp\Big(- \beta\,\delta \int_t^T \|\pmb{\mathfrak m}(r)-\widetilde {\bf m}(r) \|_{(\R^3)^N}^2\,{\rm d}r\Big)\,, \label{eq:integrand-method-B}
   \end{align}
   where $\pmb{\mathfrak m}$ solves \eqref{eq:auxiliary-sllg} with $\pmb{\mathfrak m}(t)=\pmb{m}$. Let $\pmb{m}_{\bar{h},i,l}^+$ and $\pmb{m}_{\bar{h},i,l}^-$ be the points in $\mathcal{M}$ as defined above. We compute the central difference quotients component-wise, and use \eqref{eq:scheme-auxi-prob-sde-0-intermediate}-\eqref{eq:scheme-auxi-prob-sde-0-iteration} \green{to approximate related solutions from \eqref{eq:auxiliary-sllg} in \eqref{eq:integrand-method-B} for $i=1,2,\ldots, N$, and $l=1,2,3$,
    \begin{equation}\label{aprox:method-b-gradient-0}
    \begin{aligned}
    d_{l,i} H(t,\pmb{m}) &:= \frac{1}{2\bar{h}}\Big[ H(t,\pmb{m}_{\bar{h},i,l}^+ )- H(t, \pmb{m}_{\bar{h},i,l}^-)\Big] \,, \\
    d_i H(t,\pmb{m}) &:= \Big(\mathbb{E}\big[ d_{1,i}H(t, \pmb{m})\big],\, \mathbb{E}\big[ d_{2,i}H(t, \pmb{m})\big],\, \mathbb{E}\big[ d_{3,i}H(t, \pmb{m})\big]\Big)^\top\,.
    \end{aligned}
    \end{equation}
    so that $\partial_i w(t,\pmb{m}) \simeq d_i H(t,\pmb{m})$.} We approximate the expectation in \eqref{aprox:method-b-gradient-0} via Monte-Carlo estimation together with the method of {\em antithetic variates}. We then approximate $\nabla_\mathcal{M} w(t,\pmb{m})$~(hence $\nabla_\mathcal{M} W(t,\pmb{m})$) as
    \begin{align*}
    & \nabla_\mathcal{M} w(t,\pmb{m})\simeq \Big( d_1 H(t,\pmb{m}),\, \ldots,\, d_N H(t,\pmb{m})\Big)^\top\,, \\
 &     \nabla_\mathcal{M} W(t,\pmb{m})\simeq  -\frac{1}{\beta w(t,\pmb{m})}\Big( d_1 H(t,\pmb{m}),\, \ldots,\, d_N H(t,\pmb{m})\Big)^\top\,.
    \end{align*}
 \end{itemize}
 Thus, we simulate the transformation function $\bar{\bf u}(t_{\ell}, \pmb{\mathfrak m}^\ell)$ from \eqref{eq:control-feedback} by using one of the above methods, where the sequence $(t_{\ell}, \pmb{\mathfrak m}^\ell)$ is described in Subsection~\ref{subsec:eq:auxiliary-sllg}.
 
 \subsection{Optimal state}\label{subsec:optimal-state-numeric}
  We use again the semi-implicit method proposed in \cite{mentink} to approximate the solution in \eqref{SLLG} in which each realization takes values in ${\mathcal M}$. For any $\pmb{m}=({\it m}_1,\ldots, {\it m}_N)\in \mathcal{M}$, and $i=1,2,\ldots,N$, define 
 \begin{align}
 {\rm a}_{i}\big(\pmb{m}, \bar{\bf u}(t, \pmb{m})\big) &:= -({\bf Q}\pmb{m})_i+ C_{\tt ext}\bar{\bf u}_i(t, \pmb{m}) - \alpha\, {\it m}_i \times \big[ - ({\bf Q}\pmb{m})_i+ C_{\tt ext}\bar{\bf u}_i(t, \pmb{m})\big]\,, \label{defi:mentink-drift}
 \end{align}
 where ${\bf Q}$ is defined in \eqref{eq:Q}. We use again the scheme \eqref{eq:scheme-auxi-prob-sde-0-intermediate}-\eqref{eq:scheme-auxi-prob-sde-0-iteration}, and Subsection~\ref{subsec:method-a-b} to find a $\mathcal{M}$-valued random variables $\{{\bf m}^j \}_{j=\ell}^J$ along $I_J^{\ell}$ with ${\bf m}^{\ell}= \pmb{m}\in \mathcal{M}$ and $\ell \in \{0,1,\ldots, J-1 \}$, where $\bar{\tt a}_i\big(\pmb{\mathfrak m}^j\big)$ resp. $\bar{\tt a}_i\big(\frac{\pmb{\mathfrak m}^j + \pmb{\mathfrak e}^j}{2}\big)$ in \eqref{eq:scheme-auxi-prob-sde-0-intermediate} resp.  \eqref{eq:scheme-auxi-prob-sde-0-iteration} is replaced by ${\rm a}_i\big({\bf m}^j, \bar{\bf u}(t_j, {\bf m}^j)\big)$ resp. 
\[
\displaystyle {\rm a}_i\Big(\frac{{\bf m}^j + \pmb{\rm e}^j}{2}, \bar{\bf u}\big(t_j, \big( \frac{g_1^j}{\|g_1^j\|_{\R^3}}, \ldots, \frac{g_N^j}{\|g_N^j\|_{\R^3}}\big)\big)\Big) \text{ with } \displaystyle g_i^j:=\frac{{\rm e}_i^j + {m}_i^j}{2} \quad ~(1 \leq i \leq N),
\]
such that the iterates $\{{\bf m}^j\}_{j=\ell}^J$ converge towards a weak solution of \eqref{SLLG} for $\tau \goto 0$.
 Moreover, the iterates $\{{\bf u}^j=\bar{\bf u}(t_j, {\bf m}^j)\}_{j={\ell}}^J$ defines the discrete optimal control along $I_J^\ell$.
 \vspace{.2cm}
 
 In summary, we have the following algorithm to compute the optimal solution and control along with {\bf Method B}.
 
 \begin{alg}\label{algoHJB}
 	Let ${\bf m}^{0} \in (\mathbb{S}^{2})^{N}$, $T > 0$,
 	$M \in \mathbb{N}$ be given. For $J \in \mathbb{N}$,
 	let $I_{J}^{0} := \{ t_{j} \}_{j=0}^{J}$ be a
 	partition of $[0,T]$ with time step size
 	$\tau = \frac{T}{J} > 0$. Denote by $I_{J}^{\ell} \subset I_{J}^{0}$ the sub-partition
 	on $[t_{\ell},T]$, where $\ell \in \{ 0,1,\ldots,J-1 \}$.
 	Let now $\ell$ be fixed and $\mathbf{m}^{\ell} = \pmb{m}
 	\in \mathcal{M}$ be given.
 	\begin{itemize}
\item[(I)]
 Compute $M$-samples $\mathcal{S}_{\pmb{\xi}}^{\ell} := \{ \mathcal{S}_{\pmb{\xi}}^{\ell,k} \}_{k=1}^{M}$, $\mathcal{S}_{\pmb{\xi}}^{\ell,k} := \{ \pmb{\xi}^{j}(\omega_{k}) \}_{j=\ell}^{J}$ on $I_{J}^{\ell}$.
 \item[(II)]
For $i = 1,\ldots,N$ do:
\par
For $l = 1,2,3$ do:
\begin{itemize}[leftmargin=3\parindent]
\item[(1)] Based on $\mathbf{m}^{\ell}=\pmb{m}$, compute $\pmb{m}_{\bar{h},i,l}^{+}$
and $\pmb{m}_{\bar{h},i,l}^{-}$ as in \eqref{eq:diff-quotient-point}.
\item[(2)] For $k = 1,\ldots,M$ do:
\begin{itemize}
\item[(a)] Compute $\mathcal{S}_{\pmb{\mathfrak m}}^{+,\ell,k} := \{\pmb{\mathfrak m}^{j}(\omega_{k})\}_{j=\ell}^{J}$ resp.\, $\mathcal{S}_{\pmb{\mathfrak m}}^{-,\ell,k}
 := \{\pmb{ \mathfrak m}^{j}(\omega_{k}) \}_{j=\ell}^{J}$ on $I_{J}^{\ell}$ via
scheme~\eqref{eq:scheme-auxi-prob-sde-0-intermediate}-\eqref{eq:scheme-auxi-prob-sde-0-iteration} for $\bar{\mathtt{a}}_{i}(\pmb{\mathfrak m}^{j})$ and $\bar{\tt a}_i\big(\frac{\pmb{\mathfrak m}^j + \pmb{\mathfrak e}^j}{2}\big)$ using
$\{ +\pmb{\xi}^{j}(\omega_{k}) \}_{j=\ell}^{J}$ resp.\,
$\{ -\pmb{\xi}^{j}(\omega_{k}) \}_{j=\ell}^{J}$.
\item[(b)] Compute $H(t_{\ell}, \pmb{m}_{\bar{h},i,l}^{+},\omega_{k})$ resp.\,
$H(t_{\ell},\pmb{m}_{\bar{h},i,l}^{-},\omega_{k})$ in~\eqref{eq:integrand-method-B} based on $\mathcal{S}_{\pmb{\mathfrak m}}^{+,\ell,k}$ resp.\,   $\mathcal{S}_{\pmb{\mathfrak m}}^{-,\ell,k}$ to determine $d_{l,i}H(t_{\ell},\pmb{m},\omega_{k})$
in~\eqref{aprox:method-b-gradient-0} and store it.
\end{itemize}
\item[(3)] Approximate $\mathbb{E}[d_{l,i}H(t_{\ell},\pmb{m})]$
in~\eqref{aprox:method-b-gradient-0} via Monte-Carlo estimation along with the variance reduction method of antithetic variates.
\end{itemize}
\item[(III)] Set $\nabla_{\mathcal{M}} w(t_{\ell},\pmb{m}) \approx \bigl( d_{1}H(t_{\ell},\pmb{m}),\ldots,d_{N}H(t_{\ell},\pmb{m}) \bigr)^{\top}$
with $d_{i}H(t_{\ell},\pmb{m})$ as in~\eqref{aprox:method-b-gradient-0}, and compute 
$\bar{\mathbf{u}}(t_{\ell},{\bf m}^\ell)$ as in~\eqref{eq:control-feedback}.
\item[(IV)] Compute $\mathbf{e}^\ell \in (\mathbb{R}^{3})^{N}$
via Scheme~\eqref{eq:scheme-auxi-prob-sde-0-intermediate} with ${\rm a}_{i}\bigl(
\mathbf{m}^{\ell}, \bar{\mathbf{u}}(t_{\ell},{\bf m}^{\ell}) \bigr)$ defined in \eqref{defi:mentink-drift}.
\item[(V)] Define $\mathbf{g}^{\ell} := \left( \frac{g_{1}^{\ell}}{{\| g_{1}^{\ell} 
\|}_{\mathbb{R}^{3}}},\ldots, \frac{g_{N}^{\ell}}{{\| g_{N}^{\ell}  \|}_{\mathbb{R}^{3}}}
\right)^{\top}$ with $g_{i}^{\ell} := \frac{m_{i}^{\ell}+e_{i}^{\ell}}{2}\,\,(1\le i\le N)$.
\item[(VI)] 
For $i = 1,\ldots,N$ do:
\par
For $l= 1,2,3$ do:
\begin{itemize}[leftmargin=3\parindent]
\item[(4)] Based on $\mathbf{g}^{\ell}:=\pmb{g}$, compute $\pmb{g}_{\bar{h},i,l}^{+}$
and $\pmb{g}_{\bar{h},i,l}^{-}$ as in \eqref{eq:diff-quotient-point}.
\item[(5)] For $k = 1,\ldots,M$ do:
 \begin{itemize}
 \item[(c)] Compute $\mathcal{S}_{\pmb{\mathfrak m}}^{+,\ell,k}
:= \{\pmb{\mathfrak m}^{j}(\omega_{k})\}_{j=\ell}^{J}$ resp.\,						
$\mathcal{S}_{\pmb{\mathfrak m}}^{-,\ell,k}:= \{\pmb{
\mathfrak m}^{j}(\omega_{k})\}_{j=\ell}^{J}$ on $I_{J}^{\ell}$ via
scheme~\eqref{eq:scheme-auxi-prob-sde-0-intermediate}-\eqref{eq:scheme-auxi-prob-sde-0-iteration} for $\bar{\mathtt{a}}_{i}(\pmb{\mathfrak m}^{j})$ and $\bar{\tt a}_i\big(\frac{\pmb{\mathfrak m}^j + \pmb{\mathfrak e}^j}{2}\big)$
using $\{ +\pmb{\xi}^{j}(\omega_{k}) \}_{j=\ell}^{J}$ resp.\,
$\{ -\pmb{\xi}^{j}(\omega_{k})\}_{j=\ell}^{J}$ with initial condition $\pmb{\mathfrak m}^{\ell}=\pmb{g}$.
\item[(d)] Compute $H(t_{\ell},\pmb{g}_{\bar{h},i,l}^{+},\omega_{k})$ resp.\,
$H(t_{\ell},\pmb{g}_{\bar{h},i,l}^{-},\omega_{k})$
in~\eqref{eq:integrand-method-B} based on $\mathcal{S}_{\pmb{\mathfrak m}}^{+,\ell,k}$
resp.\, $\mathcal{S}_{\pmb{\mathfrak m}}^{-,\ell,k}$ to determine
$d_{l,i}H(t_{\ell}, \pmb{g},\omega_{k})$ in~\eqref{aprox:method-b-gradient-0} and store it.
\end{itemize}
\item[(6)] Approximate $\mathbb{E}[d_{l,i}H(t_{\ell},\pmb{g})]$
in~\eqref{aprox:method-b-gradient-0} via Monte-Carlo estimation along with the variance reduction method of antithetic variates.
\end{itemize}
\item[(VII)] Set $\nabla_{\mathcal{M}} w(t_{\ell},\pmb{g}) \approx \bigl( d_{1}H(t_{\ell},\pmb{g}),\ldots,d_{N}H(t_{\ell},\pmb{g}) \bigr)^{\top}$
with $d_{i}H(t_{\ell},\pmb{g})$ as in~\eqref{aprox:method-b-gradient-0}, and compute 
$\bar{\mathbf{u}}(t_{\ell},{\bf g}^\ell)$ as in~\eqref{eq:control-feedback}, and 
$\mathbf{m}^{\ell+1}$ via scheme~\eqref{eq:scheme-auxi-prob-sde-0-iteration} with
${\rm a}_{i}\bigl(\mathbf{g}^{\ell},\bar{\mathbf{u}}(t_{\ell},{\bf g}^{\ell})\bigr)$.
\end{itemize}
\end{alg}
 
 \section{Computational Experiments}\label{sec:computational-experiments}
 We computationally study the behavior of the optimal state and control for the switching dynamics of an ensemble of $N$ particles by using the algorithm from Section~\ref{sec:numerics}. For this purpose,
 we employ discretely distributed random numbers from the GNU Scientific Library~\cite{galassi}. All computations are
 performed on an Intel Core i5-4670 3.40GHz processor with 16GB RAM in double precision arithmetic. The arising linear algebraic systems are solved by the Gaussian elimination method~\cite{galassi}.
 
 \subsection{Test studies} We start with some test problems to compare the two methods from Subsection~\ref{subsec:method-a-b}. For this purpose, we omit certain energy contributions in \eqref{eq:effective-field}, and allow only one or two spins such that an exact solution of \eqref{lHJB} becomes available. 
 \vspace{.2cm}

 \noindent{\bf Test problem~$1$:} Consider the controlled problem for a single spin (${\bf J}={\bf 0}$) of an isotropic material (${\bf D}={\bf 0}$), and $\delta=0$ in the cost functional; all other parameters are equal to $1$. Then \eqref{lHJB} is the backward heat equation 
 \begin{align} \label{eq:test-problem-1}
 \green{-\partial_t w(t,\pmb{m})- \frac{1}{2} \Delta_{\mathbb{S}^2} w(t,\pmb{m})=0\,.}
 \end{align} 
We shall use spherical harmonics to describe the exact solution of \eqref{eq:test-problem-1}. Note that for any $\pmb{m}= ({\it m}_1, {\it m}_2, {\it m}_3)\in \mathbb{S}^2$, the spherical harmonic $w_{0,1}(\pmb{m})= {\it m}_3$ is an eigen-function of the Laplace-Beltrami operator with eigenvalue $-2$ \cite[Lemma 4.3.26]{FG}, i.e., 
\begin{align*}
\Delta_{\mathbb{S}^2} w_{0,1}(\pmb{m}) + 2 w_{0,1}(\pmb{m})=0 \quad \forall \, \pmb{m}\in \mathbb{S}^2\,.
\end{align*}
 As $w= \exp(-W)$, we know that $w$ has to be positive, while $w_{0,1}$ may also take negative values. Therefore we use the constant spherical harmonic $w_{0,0}(\pmb{m}):=1$. Consider the problem \eqref{eq:test-problem-1} with final time condition $w(T,\pmb{m})=w_{0,1}(\pmb{m}) + 2$. We obtain this terminal condition by choosing the terminal payoff $h(\pmb{m})= - \frac{1}{2}\log\big(w_{0,1}(\pmb{m}) + 2\big)$. Then  the solution of \eqref{eq:test-problem-1} is
 \begin{align*}
 w(t,\pmb{m})= \exp(t-T) w_{0,1}(\pmb{m}) + 2\,.
 \end{align*}
 Moreover, we have the explicit formula for $\nabla_{\mathbb{S}^2} w(t,\pmb{m})$, and hence for $\nabla_{\mathbb{S}^2} W(t,\pmb{m})$:
 \begin{equation}\label{eq:gradient-test-problem-1}
 \begin{aligned}
 \nabla_{\mathbb{S}^2} w(t,\pmb{m})&= \exp(t-T)\big(-{\it m}_1{\it m}_3, -{\it m}_2{\it m}_3, 1-{\it m}_3^2\big)^\top\, , \\
  \nabla_{\mathbb{S}^2} W(t,\pmb{m})&= - \frac{1}{2}\frac{\exp(t-T)}{\exp(t-T) w_{0,1}(\pmb{m}) + 2}\big(-{\it m}_1 {\it m}_3, -{\it m}_2 {\it m}_3, 1-{\it m}_3^2\big)^\top\,.
 \end{aligned}
 \end{equation}
 Let $\bar{\bf u}_{\tt exct}(t,\pmb{m})$ resp. $\bar{\bf u}_{\tt app}(t,\pmb{m})$ be the function defined in \eqref{eq:control-feedback} associated to \eqref{eq:gradient-test-problem-1} resp. \eqref{eq:prob-represnt-lhjb}, and
 ${\bf m}_{\tt exct}^*(t)$ resp.  ${\bf m}_{\tt app}^*(t)$ be the solution of \eqref{SLLG} with $\bar{\bf u}_{\tt exct}\big(t,{\bf m}_{\tt exct}^*(t)\big)$
 resp. $\bar{\bf u}_{\tt app}\big(t,{\bf m}_{\tt app}^*(t)\big)$. By denoting the error
 \begin{align*}
  {\tt err}(t):= \|{\bf m}_{\tt exct}^*(t)-{\bf m}_{\tt app}^*(t)\|_{(\R^3)^N}^2\,,
  \end{align*}
   we show the behavior of ${\tt err}(t)$ for different values of Monte-Carlo realization $M$. For its simulation, we have used $T=0.5$, $\tau=10^{-2}$, $\bar{h}= \frac{1}{\sqrt{M}}$, $\bar{\bf m}= {\bf e}_1$, and other parameters as specified in the beginning of this subsection. We 
   observe that the error ${\tt err}(t)$ for {\bf Method B} is significantly smaller~(by a factor of $\frac{1}{20}$ in our simulations) if compared to {\bf Method A}, see Figure~\ref{fig:error-test-1}. Moreover, at least $M\approx 10^6$ 
   realizations are needed to balance the approximate computation via {\bf Method B}
   with remaining error sources.
   \begin{rem}\label{rem:h-M-parameters}
   Computational studies with respect to the parameters $\big(\tau,\bar{h},M\big)$ show that, independent of $\tau$, we have to choose $\bar{h} = \mathcal{O}\big(\frac{1}{\sqrt{M}}\big)$ to approximate the $\nabla_\mathcal{M} w$~(hence $\nabla_\mathcal{M} W$) accurately. For choice $\bar{h} \ll \mathcal{O}(\frac{1}{\sqrt{M}})$, irrespective of the {\bf Method A} or {\bf Method B}, we observe a strongly oscillatory behavior of the solution in case $M\le 10^4$. 
   \end{rem}

\noindent{\bf Test problem~$2$:} We study the interaction of two isotropic (${\bf D}={\bf 0}$) spins for $\alpha=0=\delta$, and all other parameters are equal to $1$. Let us first recall how the spherical harmonics on a single sphere $\mathbb{S}^2$ generalizes to the manifold $\mathcal{M}=(\mathbb{S}^2)^N= (\mathbb{S}^2)^2$ most naturally. Indeed, because $\mathcal{M}$ is a tensor product of spheres, and the spherical harmonics form an orthogonal basis on the \green{single} sphere, the tensor products of spherical harmonics form an orthogonal basis on $\mathcal{M}$. It is therefore reasonable to expect that the simplest meaningful test problems on $(\mathbb{S}^2)^2$ can be constructed with terminal time conditions which are products of low-order spherical harmonics and which are eigen-functions of the transformed Bellman equation. Because of the spin interaction, the first order coefficient ${\bf b}$ in \eqref{lHJB} does not vanish any more. Therefore, we combine the functions $w_{0,0}(\pmb{m}_1) w_{0,0}(\pmb{m}_2), w_{0,0}(\pmb{m}_1)w_{0,1}(\pmb{m}_2)$ and $w_{0,1}(\pmb{m}_1)w_{0,0}(\pmb{m}_2)$ with $\pmb{m}_1=({\it m}_{1,1},\, {\it m}_{1,2},\,{\it m}_{1,3})\in \mathbb{S}^2$ and $\pmb{ m}_2=({\it m}_{2,1},\, {\it m}_{2,2},\,{\it m}_{2,3})\in \mathbb{S}^2$ to \green{pose} a test problem on $(\mathbb{S}^2)^2$. Denoting by
\begin{align*}
w_{00,00}(\pmb{m}_1, \pmb{m}_2):=1\, ,\quad  w_{01,00}(\pmb{m}_1, \pmb{m}_2)= {\it m}_{1,3}\, , \quad \text{and} \quad
 w_{00,01}(\pmb{m}_1, \pmb{m}_2)= {\it m}_{2,3}\,,
\end{align*}
we consider the following version of \eqref{lHJB},
\begin{equation}\label{eq:test-problem-2}
\begin{aligned}
 -&\partial_t w\big(t,\pmb{m}_1, \pmb{m}_2\big) - \frac{1}{2}\Delta_{(\mathbb{S}^2)^2} w\big(t,\pmb{m}_1, \pmb{m}_2\big) + {\bf b}(\pmb{m}_1, \pmb{m}_2)\cdot \nabla_{(\mathbb{S}^2)^2} w\big(t,\pmb{m}_1, \pmb{m}_2\big)=0\,,  \\
& w\big(T, \pmb{m}_1, \pmb{m}_2\big)= w_{01,00}(\pmb{m}_1, \pmb{m}_2) +  w_{00,01}(\pmb{m}_1, \pmb{m}_2) + 2\,w_{00,00}(\pmb{m}_1, \pmb{m}_2)\,,
\end{aligned}
\end{equation}
with the positive \green{semi-}definite matrix ${\bf J}$: for any $\mu>0$,
\begin{align*}
{\bf J}=
\begin{pmatrix} 
\mu & 0 & 0 & -\mu & 0 & 0\\
0 & \mu & 0 & 0 & -\mu & 0\\ 
0& 0& \mu & 0 & 0& -\mu \\
-\mu & 0 & 0 & \mu & 0 & 0\\
0& -\mu & 0 & 0& \mu & 0 \\
0 & 0 & -\mu & 0& 0& \mu
\end{pmatrix}\,.
\end{align*}
Since $\alpha=0$, we have 
\begin{align*} 
{\bf b}(\pmb{m}_1, \pmb{m}_2) &= \Big( -\mu\big({\it m}_{1,2}{\it m}_{2,3}-{\it m}_{1,3}{\it m}_{2,2}\big),\, \mu\big({\it m}_{1,1}{\it m}_{2,3}-{\it m}_{1,3}{\it m}_{2,1}\big),\, -\mu\big({\it m}_{1,1}{\it m}_{2,2}-{\it m}_{1,2}{\it m}_{2,1}\big), \\
& \quad\, \mu\big({\it m}_{1,2}{\it m}_{2,3} -{\it m}_{1,3}{\it m}_{2,2}\big), 
 -\mu\big({\it m}_{1,1}{\it m}_{2,3} - {\it m}_{1,3}{\it m}_{2,1}\big),\, \mu\big({\it m}_{1,1}{\it m}_{2,2}-{\it m}_{1,2}{\it m}_{2,1}\big)\Big)^\top\,.
\end{align*}
We compute the tangential gradient of the functions $w_{00,00},\,w_{01,00}$ and 
$w_{00,01}$:
\begin{align*}
\nabla_{(\mathbb{S}^2)^2} w_{00,00}(\pmb{m}_1, \pmb{m}_2)&= \big( 0,\,0,\,0,\,0,\,0,\,0\big)^\top \, , \\
 \nabla_{(\mathbb{S}^2)^2} w_{01,00}(\pmb{m}_1, \pmb{m}_2)&= \big( -{\it m}_{1,1}{\it m}_{1,3},\,-{\it m}_{1,2}{\it m}_{1,3},\, 1-{\it m}_{1,3}^2, 0,0,0\big)^\top \,, \\
  \nabla_{(\mathbb{S}^2)^2} w_{00,01}(\pmb{m}_1, \pmb{m}_2)&= \big(0,\,0,\,0,\,-{\it m}_{2,1}{\it m}_{2,3},\,-{\it m}_{2,2}{\it m}_{2,3},\, 1-{\it m}_{2,3}^2\big)^\top \,.
\end{align*}
Observe that $${\bf b}(\pmb{m}_1, \pmb{m}_2)\cdot \nabla_{(\mathbb{S}^2)^2}[w_{01,00}(\pmb{m}_1, \pmb{m}_2) + w_{00,01}(\pmb{m}_1, \pmb{m}_2)]=0\, ,$$ and $w_{01,00}$ and $w_{00,01}$ are eigen-functions of the Laplace-Beltrami operator on $(\mathbb{S}^2)^2$ with eigenvalue $-2$. Thus the exact solution of \eqref{eq:test-problem-2} is given by 
\begin{align*}
w\big(t,\pmb{m}_1, \pmb{m}_2\big)= \exp(t-T)\Big\{w_{01,00}(\pmb{m}_1, \pmb{m}_2) + w_{00,01}(\pmb{m}_1, \pmb{m}_2)\Big \} + 2\,w_{00,00}(\pmb{m}_1, \pmb{m}_2).
\end{align*}
Moreover, we compute $\nabla_{(\mathbb{S}^2)^2} W\big(t, \pmb{m}_1, \pmb{m}_2\big)$ as 
\begin{align}
\nabla_{(\mathbb{S}^2)^2} W \big(t,\pmb{m}_1, \pmb{m}_2\big) &= - \frac{\exp(t-T)}{\exp(t-T)\big({\it m}_{1,3} + {\it m}_{2,3}\big) + 2}\Big( -{\it m}_{1,1}{\it m}_{1,3},\,-{\it m}_{1,2}{\it m}_{1,3},\, 1-{\it m}_{1,3}^2,\,\notag \\
&\hspace{4cm} -{\it m}_{2,1}{\it m}_{2,3},\,-{\it m}_{2,2}{\it m}_{2,3},\, 1-{\it m}_{2,3}^2\Big)^\top \in \R^6\,.\label{eq:gradient-test-problem-2}
\end{align}
Note that test problem $2$ corresponds to the terminal payoff 
\begin{align*}
h({\bf m}(T))=-\frac{1}{2}\log\Big(w_{01,00}({\bf m}_1(T),{\bf m}_2(T)) +  w_{00,01}({\bf m}_1(T), {\bf m}_2(T)) + 2 \Big) \quad \text{for}\,\, {\bf m}=({\bf m}_1, {\bf m}_2)\,.
\end{align*}
Similar to test problem $1$, we define ${\tt err}(t)$ and study its behavior in time $t$ for different values of the Monte-Carlo realization $M$ by 
using {\bf Method B}, see Figure~\ref{fig:error-test-1}, ${\rm (C)}$. The simulation is made for the following choice of parameters: $T=0.5$, $\tau=10^{-2}$, $\bar{h}= \frac{1}{\sqrt{M}}$, $\bar{\bf m}=\big({\bf e}_1, {\bf e}_2\big)$ and other parameters as
specified in the problem. We observe that the error ${\tt err}(t)$ decreases if one increases the Monte-Carlo realization $M$.

  \begin{figure}
 \centering
 \begin{subfigure}[b]{0.3\textwidth}
 \includegraphics[width=0.99\textwidth]{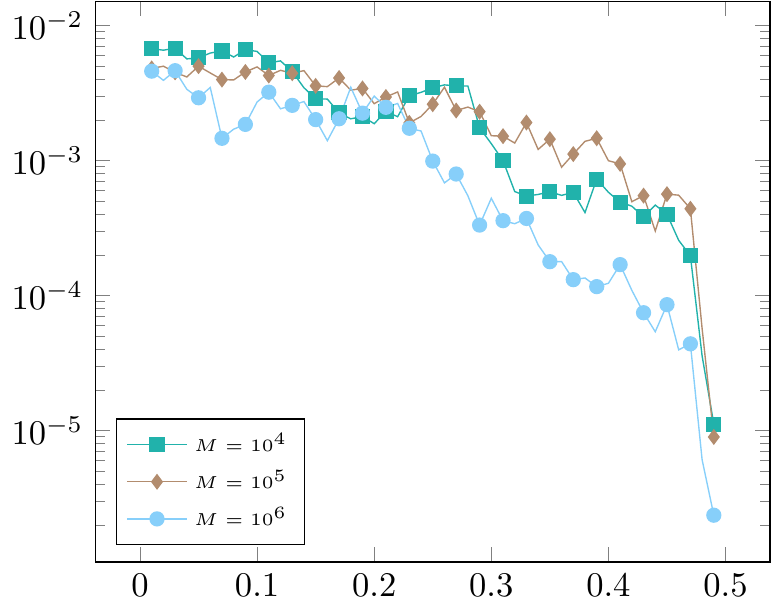}
  \caption{{\bf Method A}: $t\mapsto {\tt err}(t)$}
 \end{subfigure}
  \begin{subfigure}[b]{0.3\textwidth} 
 \includegraphics[width=0.99\textwidth]{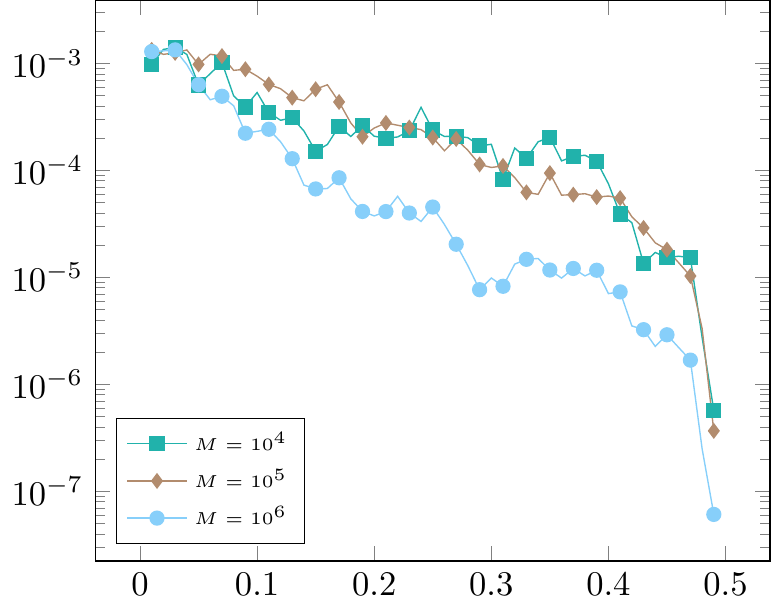}
 \caption{{\bf Method B}: $t\mapsto {\tt err}(t)$}
  \end{subfigure}
 \begin{subfigure}[b]{0.3\textwidth} 
 \includegraphics[width=0.99\textwidth]{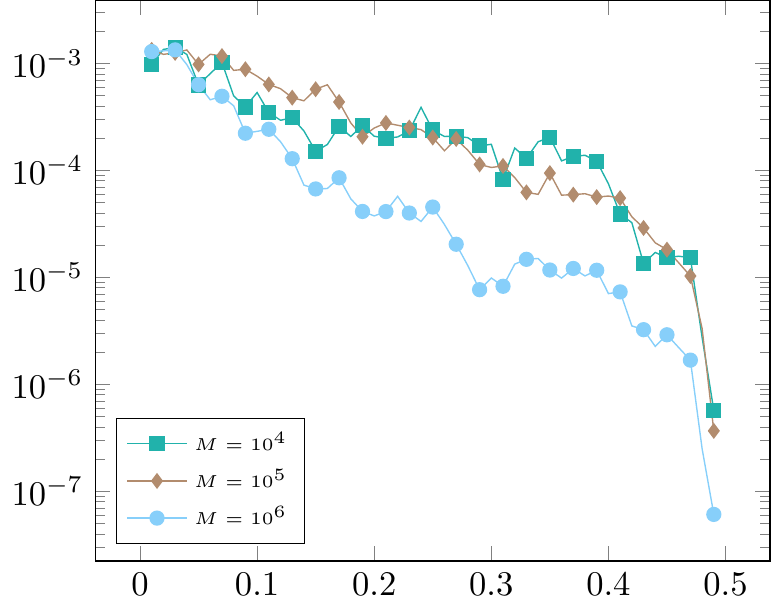}
 \caption{{\bf Method B}: $t\mapsto {\tt err}(t)$}
  \end{subfigure}
 \caption{ Time evolution of a single trajectory of the error $t\mapsto {\tt err}(t)$ : ${\rm (A)}$ by using {\bf Method A}, ${\rm (B)}$ by using {\bf Method B} for test problem $1$, and ${\rm (C)}$ by using {\bf Method B} for test problem $2$.}
 \label{fig:error-test-1}
 \end{figure}
We observe that the error ${\tt err}(t)$ for the both test problems $1$ and $2$ is of the same magnitude as the error made in the approximation of $\nabla_\mathcal{M} w$~(hence $\nabla_\mathcal{M} W$).
\vspace{.2cm}

\noindent{\bf Optimal control of two interacting isotropic spins.}
\green{In the setting of test problem $2$ we next study} the time evolution of a single trajectory of the optimal state, as well as the magnitude and direction of the optimal control. In this case, the trajectory of the optimal control lies in $x_1$-$x_2$ plane to balance the random influences; see Figure~\ref{fig:state-control-test-2}.
  \begin{figure}
 \centering
 \begin{subfigure}[b]{0.23\textwidth}
 \includegraphics[width=0.99\textwidth]{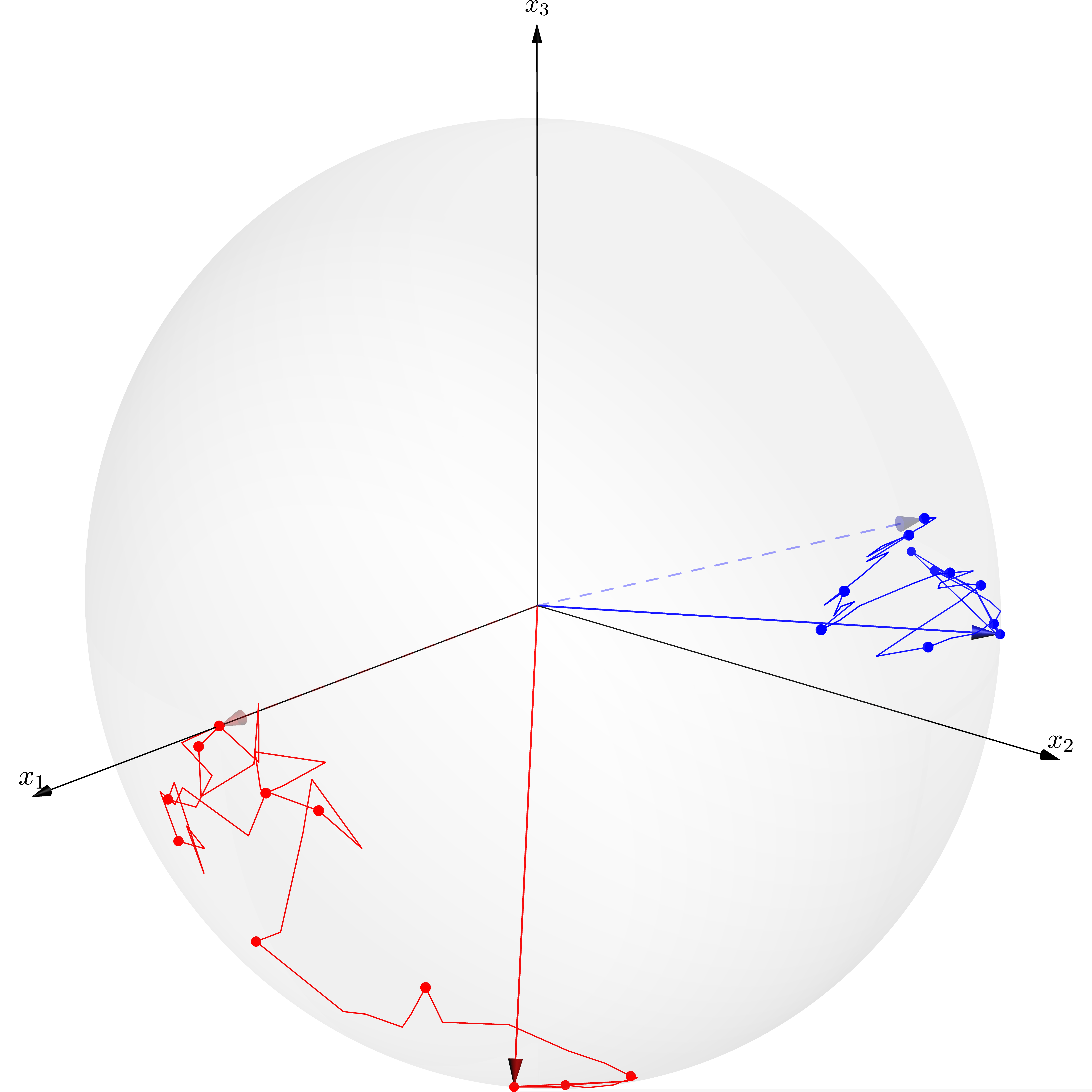}
 \caption{1st spin}
 \end{subfigure}
 \begin{subfigure}[b]{0.23\textwidth}
    \includegraphics[width=0.99\textwidth]{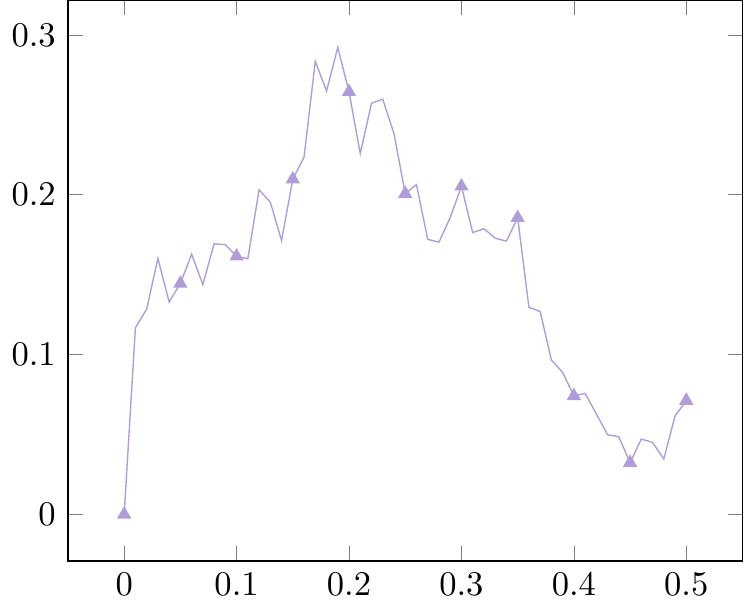}
    \caption{$t\mapsto \|u_1^*(t)\|_{\R^3}^2$}
    \end{subfigure}
  \begin{subfigure}[b]{0.23\textwidth} 
 \includegraphics[width=0.99\textwidth]{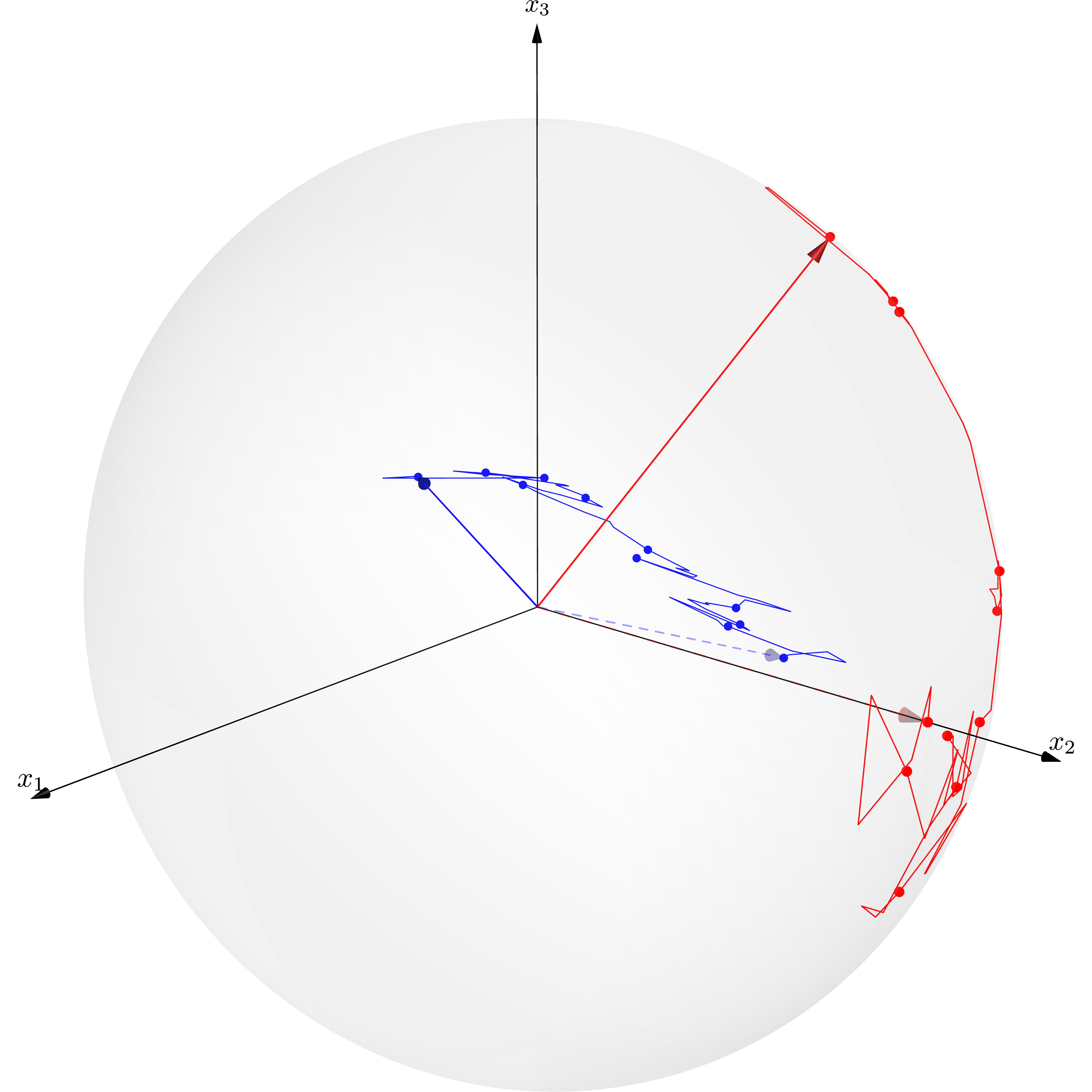}
 \caption{2nd spin}
  \end{subfigure} 
\begin{subfigure}[b]{0.23\textwidth} 
   \includegraphics[width=0.99\textwidth]{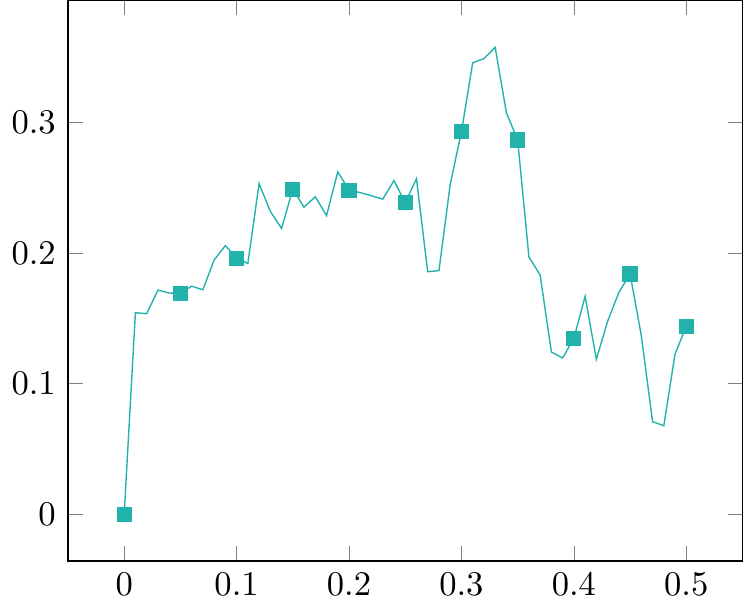}
   \caption{$t\mapsto \|u_2^*(t)\|_{\R^3}^2$}
 \end{subfigure}
 \caption{ Test problem $2$: time evolution of a single trajectory of the optimal state $t\mapsto m^*_i(t)$~(red), the direction of the optimal control $t\mapsto u_i^*(t)\|u_i^*(t)\|_{\R^3}^{-1}$~(blue), and the magnitude of the optimal control  $t\mapsto \|u_i^*(t)\|_{\R^3}^{2}$ for $i=1,2$ .}
 \label{fig:state-control-test-2}
 \end{figure}

 \begin{rem}
Computational studies for both test problems suggest stability of the scheme \eqref{eq:scheme-auxi-prob-sde-0-intermediate}-\eqref{eq:scheme-auxi-prob-sde-0-iteration}. However, convergence resp. termination of the scheme  depends crucially on the given parameters in Problem~\ref{problem-1}. For choices $$\lambda \nu^{2} \ll \min\{ \delta,1\} C_{\tt ext}^{2}(1+\alpha^{2}),$$ an exponential {\em overflow} occurs during truncation in simulations, and therefore the computed value of $w\big(t,\pmb{m}\big)$ in~\eqref{eq:prob-represnt-lhjb} is set to zero then. Hence, in this case, $\log \big(w(t,\pmb{m})\big)$ is not defined, and thus the approximation procedure to approximate $\nabla_{\mathcal{M}} W(t,\pmb{m})$ terminates. This is the reason that Examples~$5.1$ and $5.2$ from \cite{dunst_prohl} may not directly be simulated here. Notice that no exponential overflow occurs for both test problems above, since $\delta = 0$.
\end{rem}

\subsection{ Optimal control of three interacting spins}\label{subsec:three-spin}
We now study an ensemble of $N=3$ particles, which additionally are subjected to exchange
forces. We are mainly interested in the switching control for one ($i=2$) of these particles from $\bar{\bf m}_{2}$ (at initial time) to $-\bar{\bf m}_{2}$ at given final time $T$. Take
$h({\bf m}(T))= \frac{1}{2}\| {\bf m}(T)- \widetilde{\bf m}(T)\|_{(\R^3)^N}^2$, where the deterministic target profile $\widetilde{\bf m}: [0,T]\goto (\mathbb{S}^2)^3$ is given by
\begin{align*}
\widetilde{\bf m}_1(t)= {\bf e_1}\, , \quad \widetilde{\bf m}_2(t)=\begin{pmatrix} -\cos(\pi \frac{t}{T}) \\ \sin(\pi \frac{t}{T}) \\ 0 \\ \end{pmatrix}\, , \quad \widetilde{\bf m}_3(t)={\bf e}_1\, . 
\end{align*}
We use again {\bf Method B} to approximate $\nabla_\mathcal{M} W$. To simulate the optimal pair of the underlying problem,  we have used the methodology described in Subsections~\ref{subsec:eq:auxiliary-sllg} and  \ref{subsec:numeric-hjb}-\ref{subsec:optimal-state-numeric}, along with the following set-up of parameters:
\begin{center}
\begin{tabular}{ | m{0.5cm} | m{1.2cm}| m{1.8cm}  | m{2.1cm} | m{.7cm} | m{2.9cm} |m{3.5cm}| } 
\hline
$T$& $(\alpha, \delta)$ & $(\lambda, \nu)$&  $\bar{\bf m}$ & $ C_{\tt ext}$ & $\big( \bar{h}, \tau, M\big)$& ${\tt D}_i(i=1,2,3)$ \\[1mm] 
\hline
$0.5$ & $(0.1,\, 0)$ & $(10^{-3}, \, 0.3)$& $({\bf e}_1,\, -{\bf e}_1,\, {\bf e}_1)$& $0.1$& $\big(10^{-3},\, 10^{-2},\, 10^{6}\big)$ & ${\rm diag}(-5.0,\, 1.0,\, 3.5)$ \\ [1mm]
\hline
\end{tabular}
\end{center}
with the positive semi-definite matrix ${\bf J}$ such that for any ${\bf m}=(m_1, m_2,\ldots, m_N)\in (\mathbb{S}^2)^N$
\begin{equation}\label{eq:matrix-J}
 \begin{aligned}
  & ({\bf J}{\bf m})_i= -m_{i+1} + 2 m_i - m_{i-1} \quad (i=1,2,\ldots, N)\, , \\
  & m_{N+1}= m_1\,,\quad m_0= m_N\, .
 \end{aligned}
\end{equation}

  \begin{figure}
 \centering
 \begin{subfigure}[b]{0.31\textwidth}
 \includegraphics[width=0.99\textwidth]{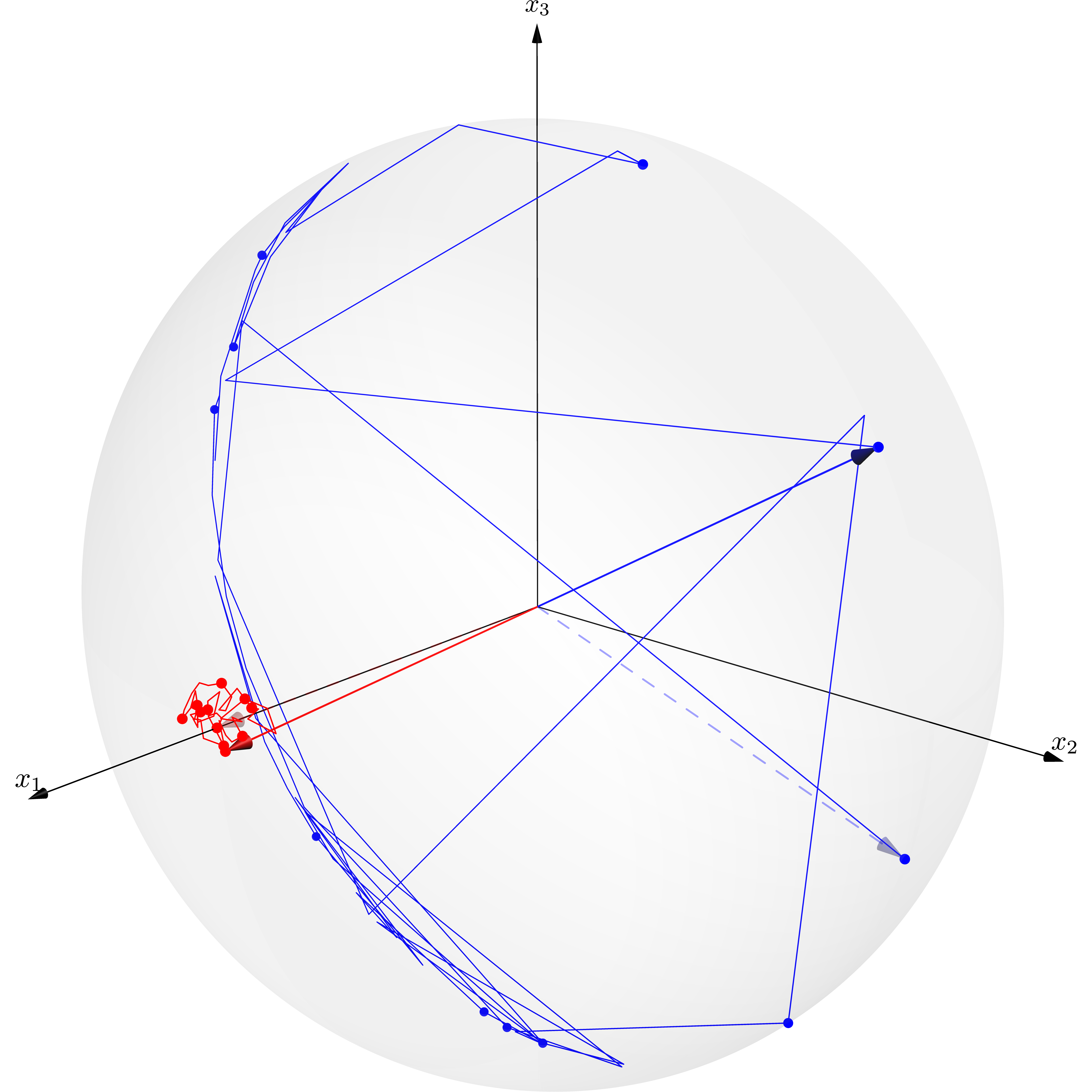}
 \caption{1st spin}
 \end{subfigure}
  \begin{subfigure}[b]{0.31\textwidth} 
 \includegraphics[width=0.99\textwidth]{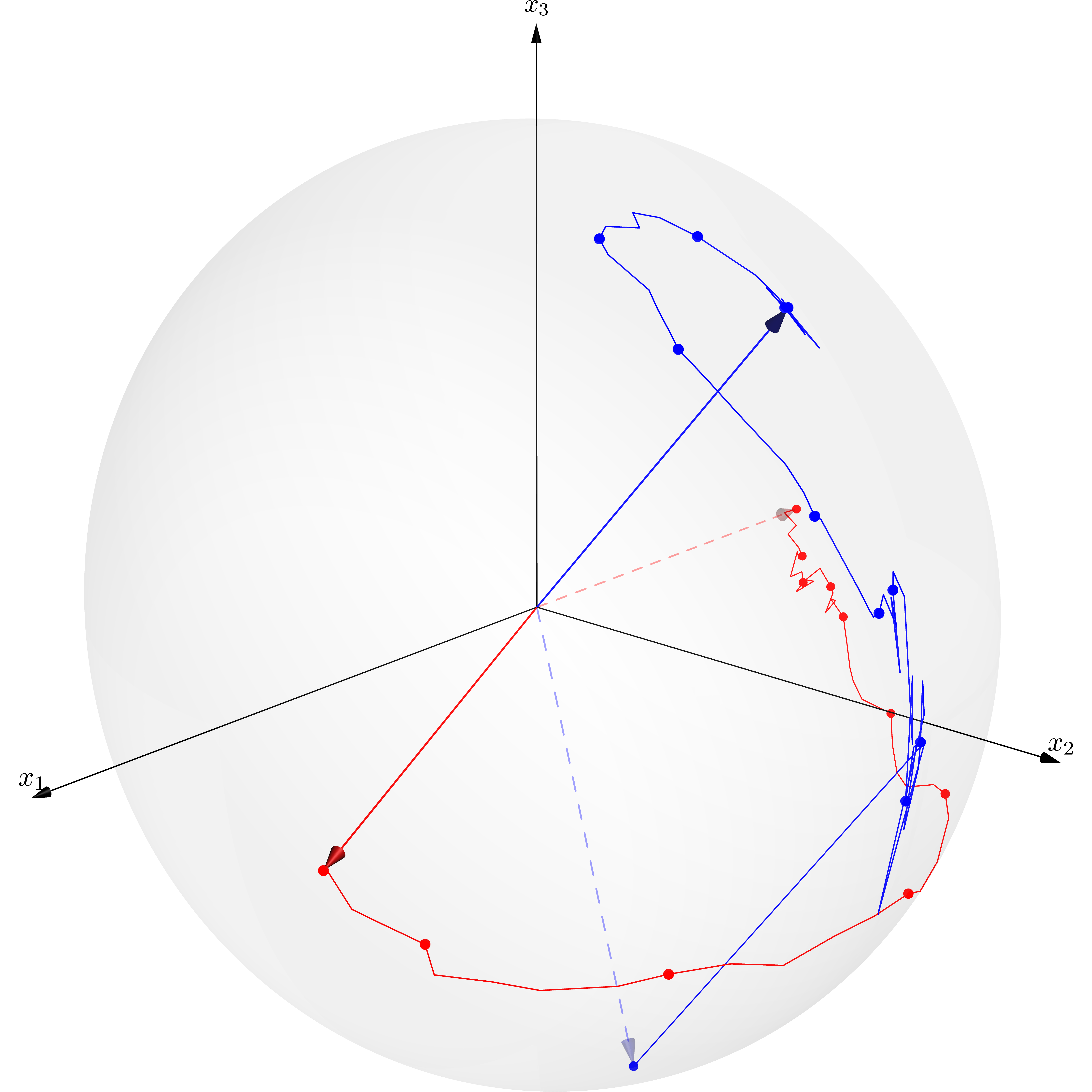}
 \caption{2nd spin}
  \end{subfigure}
    \begin{subfigure}[b]{0.31\textwidth} 
 \includegraphics[width=0.99\textwidth]{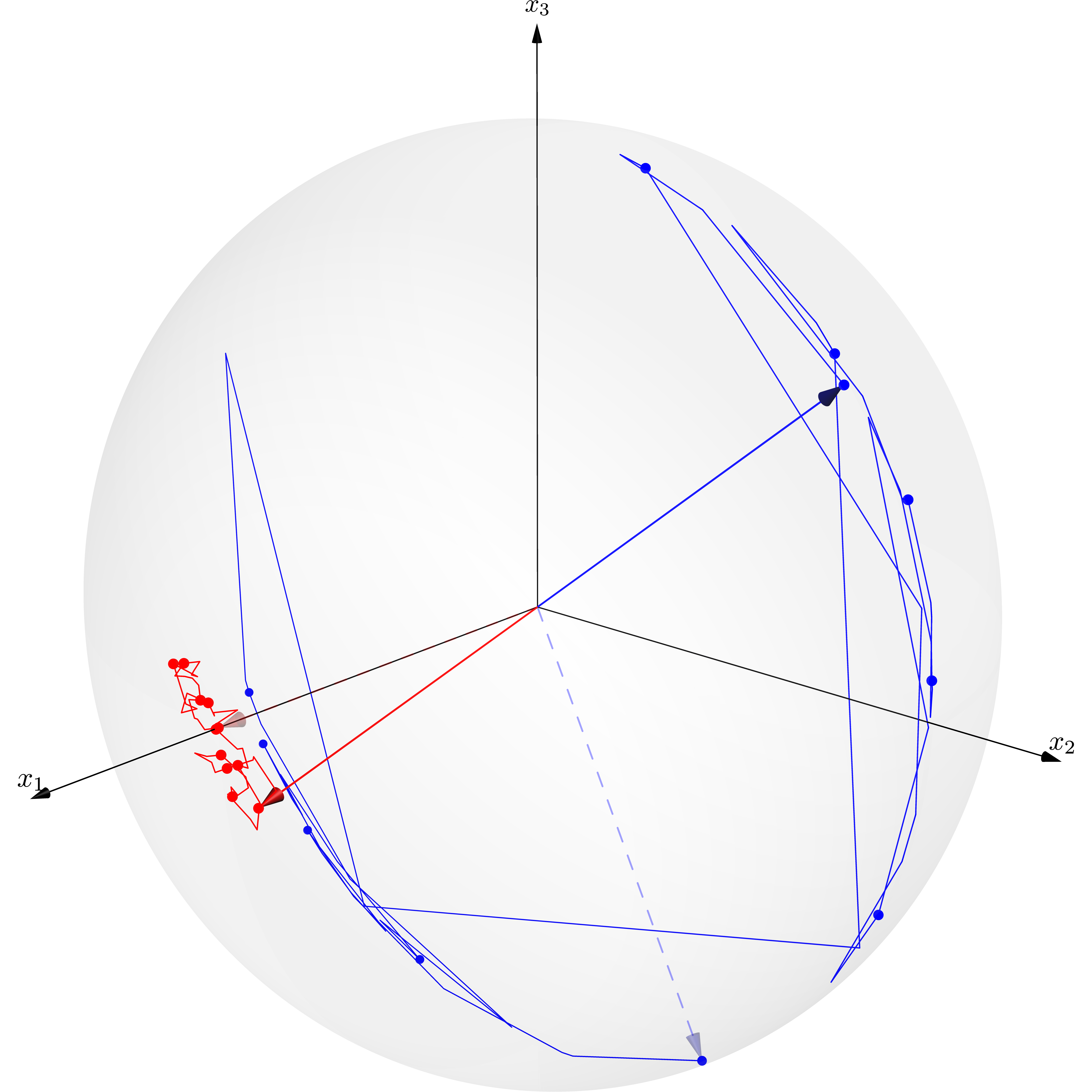}
 \caption{3rd spin}
  \end{subfigure} \\
  
   \begin{subfigure}[b]{0.30\textwidth}
   \includegraphics[width=0.99\textwidth]{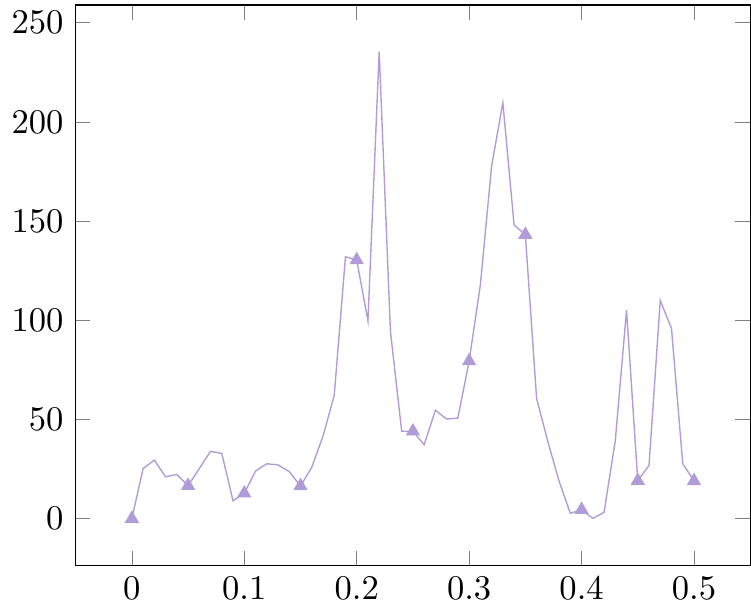}
   \caption{$t\mapsto \|u_1^*\|_{\R^3}^2$}
   \end{subfigure}
    \begin{subfigure}[b]{0.30\textwidth} 
   \includegraphics[width=0.99\textwidth]{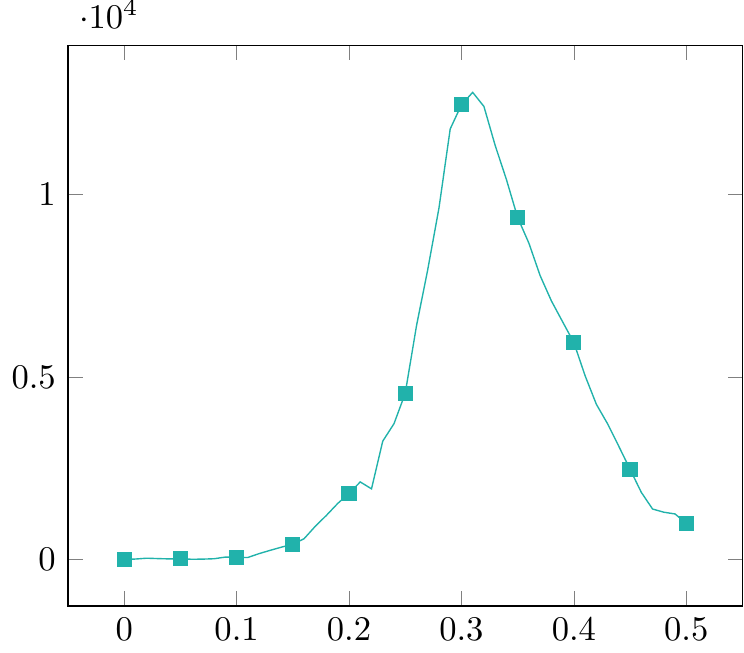}
   \caption{$t\mapsto \|u_2^*\|_{\R^3}^2$}
    \end{subfigure}
      \begin{subfigure}[b]{0.30\textwidth} 
   \includegraphics[width=0.99\textwidth]{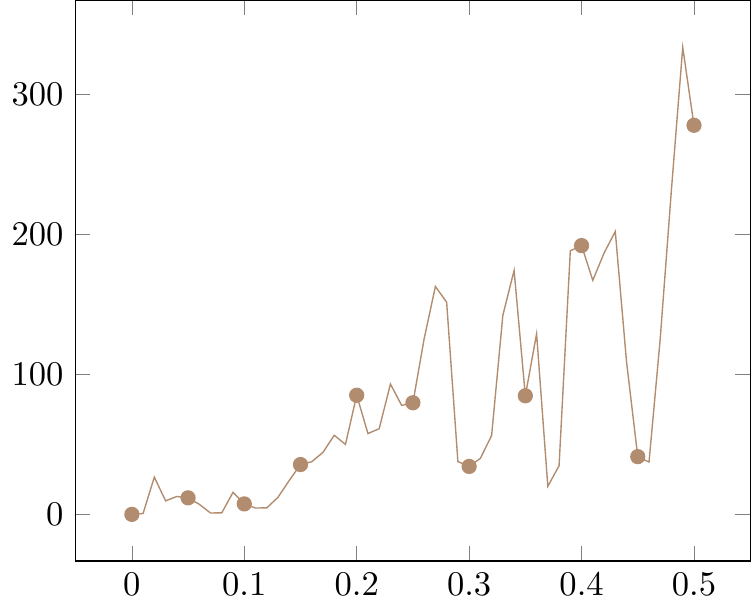}
   \caption{$t\mapsto \|u_3^*\|_{\R^3}^2$}
    \end{subfigure} \\
     \begin{subfigure}[b]{0.30\textwidth}
       \includegraphics[width=0.99\textwidth]{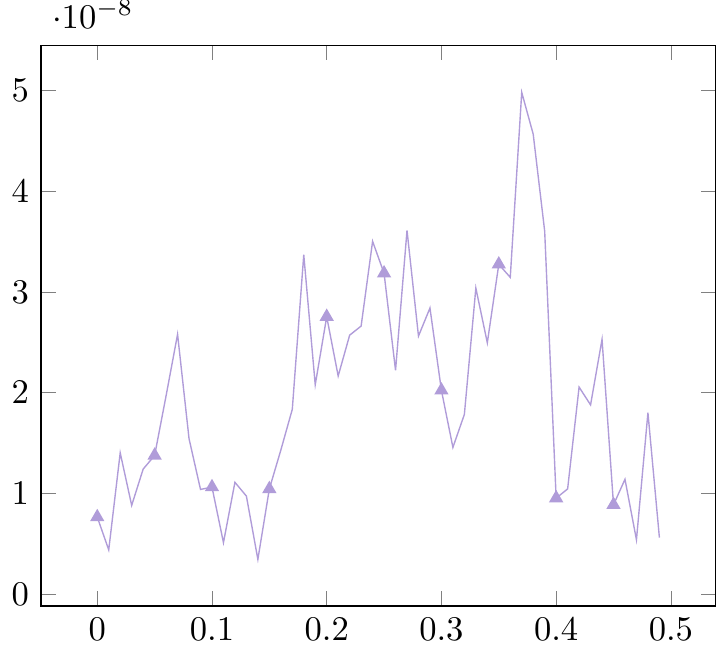}
       \caption{$t\mapsto \ll u_1^*(t), m_1^*(t)\gg $}
       \end{subfigure}
        \begin{subfigure}[b]{0.30\textwidth} 
       \includegraphics[width=0.99\textwidth]{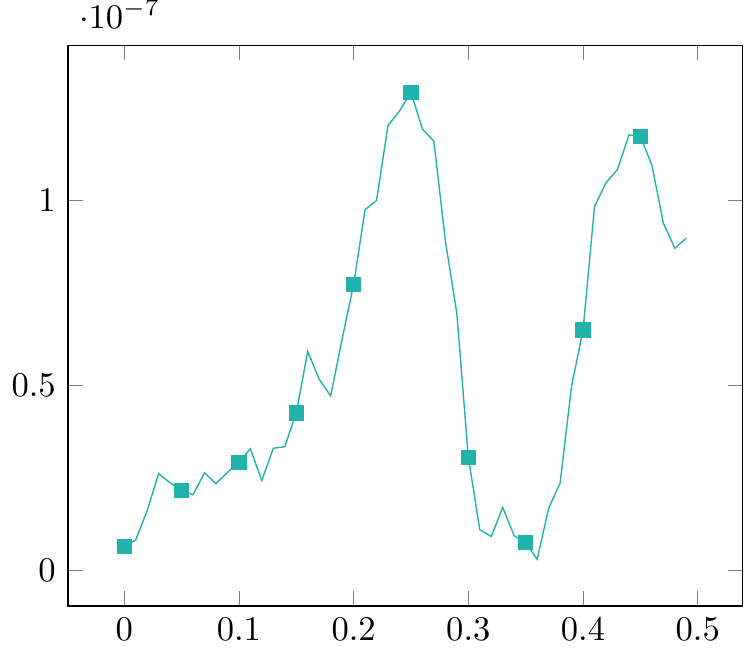}
       \caption{$t\mapsto \ll u_2^*(t), m_2^*(t)\gg$}
        \end{subfigure}
          \begin{subfigure}[b]{0.30\textwidth} 
       \includegraphics[width=0.99\textwidth]{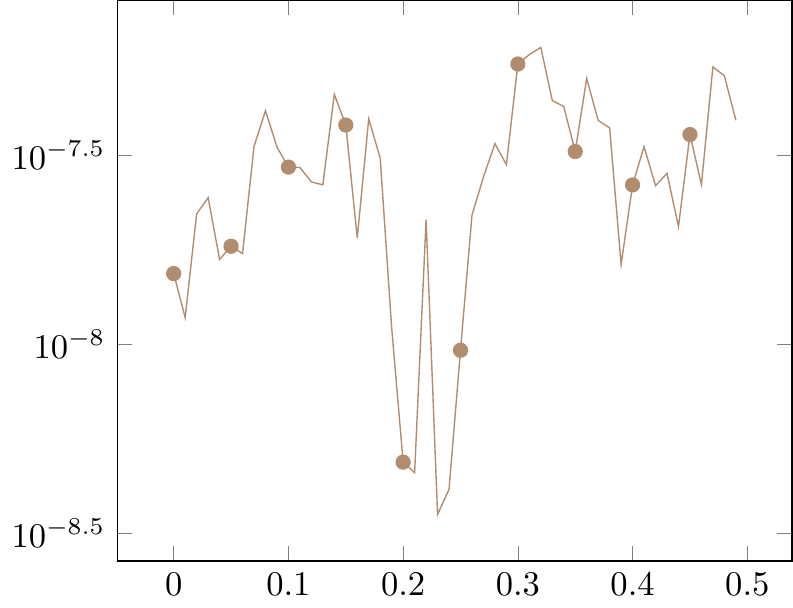}
       \caption{$t\mapsto \ll u_3^*(t), m_3^*(t)\gg$}
        \end{subfigure} 
  
 \caption{ Time evolution of a single trajectory of the optimal state $t\mapsto m^*_i(t)$~(red), the direction of the optimal control $t\mapsto u_i^*(t)\|u_i^*(t)\|_{\R^3}^{-1}$~(blue), the magnitude of the optimal control $t\mapsto \|u_i^*(t)\|_{\R^3}^{2}$, and the angle between optimal pair $t\mapsto \ll u_i^*(t), m_i^*(t)\gg$ for $i=1,2,3$.}
 \label{fig:state-control-3-spin}
 \end{figure}

In this case, the minimum value of the cost functional is $\mathcal{J}_{\tt sto}^*\equiv 0.9078$. 
Though the first and third spins start already at the desired state, it is due to the noise, and the exchange forces in particular, \green{that the optimal control is acting on the whole time interval and on all spins.} For the second spin, we observe that at the beginning and end, less control is needed opposed to the applied control at the mean time; see Figure~\ref{fig:state-control-3-spin}. The orthogonality of the optimal pair $({\bf m}^*, {\bf u}^*)$~(e.g. Remark~\ref{rem:optimal-pair-orthogonality}) is shown in Figure~\ref{fig:state-control-3-spin}, ${\rm (G)}$-${\rm (I)}$ by displaying the temporal evolution 
\begin{align*}
t\mapsto \ll m_i^*(t), u_i^*(t) \gg:= \frac{\big|\langle m_i^*(t), u_i^*(t)\rangle_{(\R^3)} \big|}{\|m_i^*(t)\|_{\R^3}\, \|u_i^*(t)\|_{\R^3}} \qquad (i=1,2,3)\,.
\end{align*}

\subsection{ Optimal control of four interacting spins} We consider here the switching control for an ensemble of $N=4$ particles. 
\vspace{.2cm}

\noindent{\bf Set-up $1$:} We use the parameters as in Subsection~\ref{subsec:three-spin} with $ \bar{\bf m}=\big( {\bf e}_1, -{\bf e}_1, {\bf e_1}, -{\bf e}_1\big)$, and $ \widetilde{\bf m}(t)=\big({\bf e}_1,\widetilde{\bf m}_2(t), {\bf e}_1, \widetilde{\bf m}_2(t)\big)$. In this case, the first and third spins start already at the desired state; the associated optimal controls are acting on the whole time interval. Moreover, for the second and fourth spins, significant controls are required to approximately meet the terminal state profile. The time evolution of $t\mapsto \|u_2^*(t)\|_{\R^3}^2$ is similar to the results for $N=3$ spin constellations~(see Figure~\ref{fig:state-control-3-spin}, ${\rm (E)}$), while $\|u_4^*(t)\|_{\R^3}^2$ is delayed in time for the fourth spin. We observe a loop of the orientation of $u_i^*(t)\|u_i^*(t)\|_{\R^3}^{-1}~(i=2,4)$ close to the terminal time; see Figure~\ref{fig:state-control-4-spin-set-up-1}.

\begin{figure}
 \centering
  \begin{subfigure}[b]{0.22\textwidth}
 \includegraphics[width=0.99\textwidth]{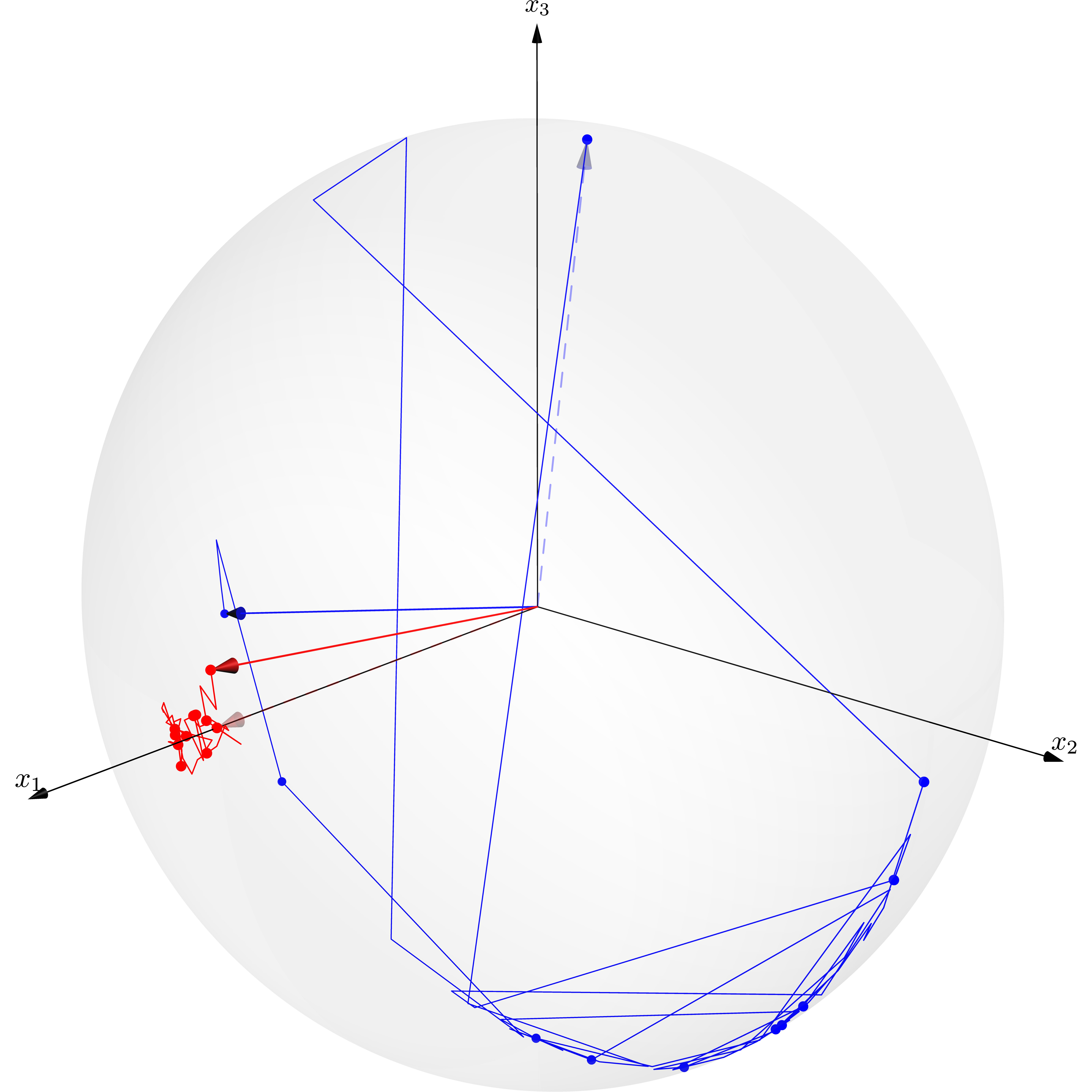}
  \caption{1st spin}
 \end{subfigure}
   \begin{subfigure}[b]{0.22\textwidth} 
 \includegraphics[width=0.99\textwidth]{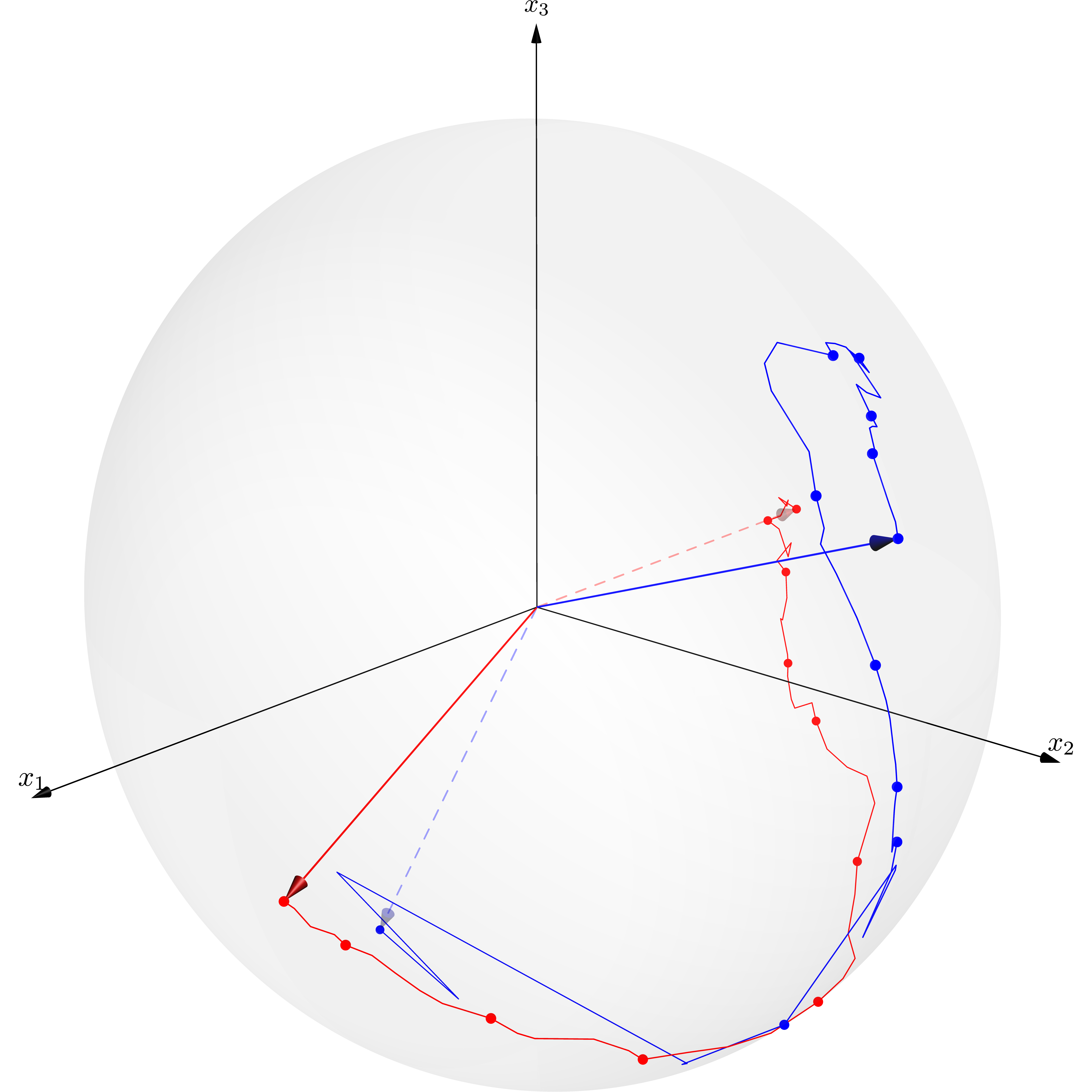}
  \caption{2nd spin}
   \end{subfigure}
     \begin{subfigure}[b]{0.22\textwidth} 
 \includegraphics[width=0.99\textwidth]{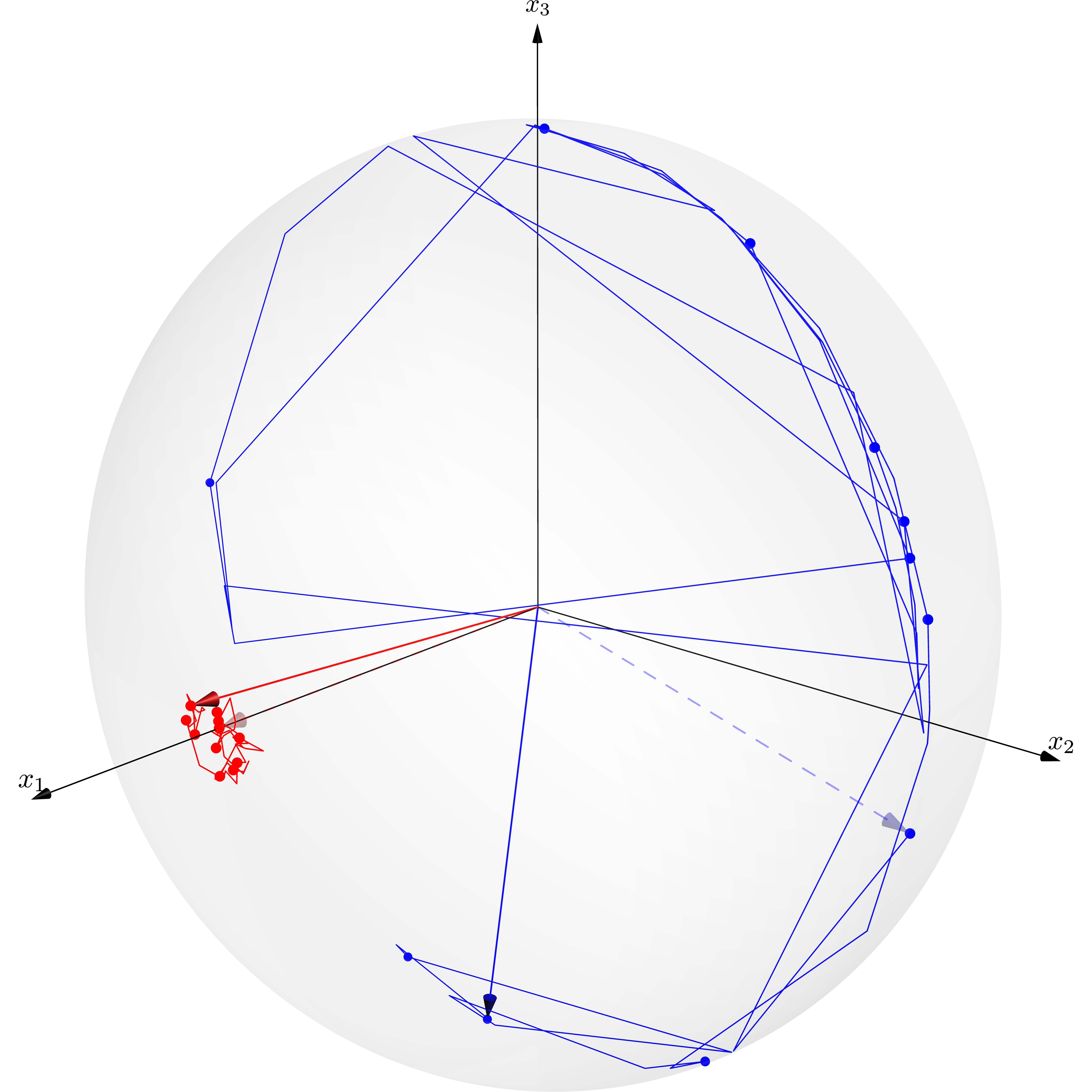}
  \caption{3rd spin}
  \end{subfigure}
   \begin{subfigure}[b]{0.22\textwidth} 
    \includegraphics[width=0.99\textwidth]{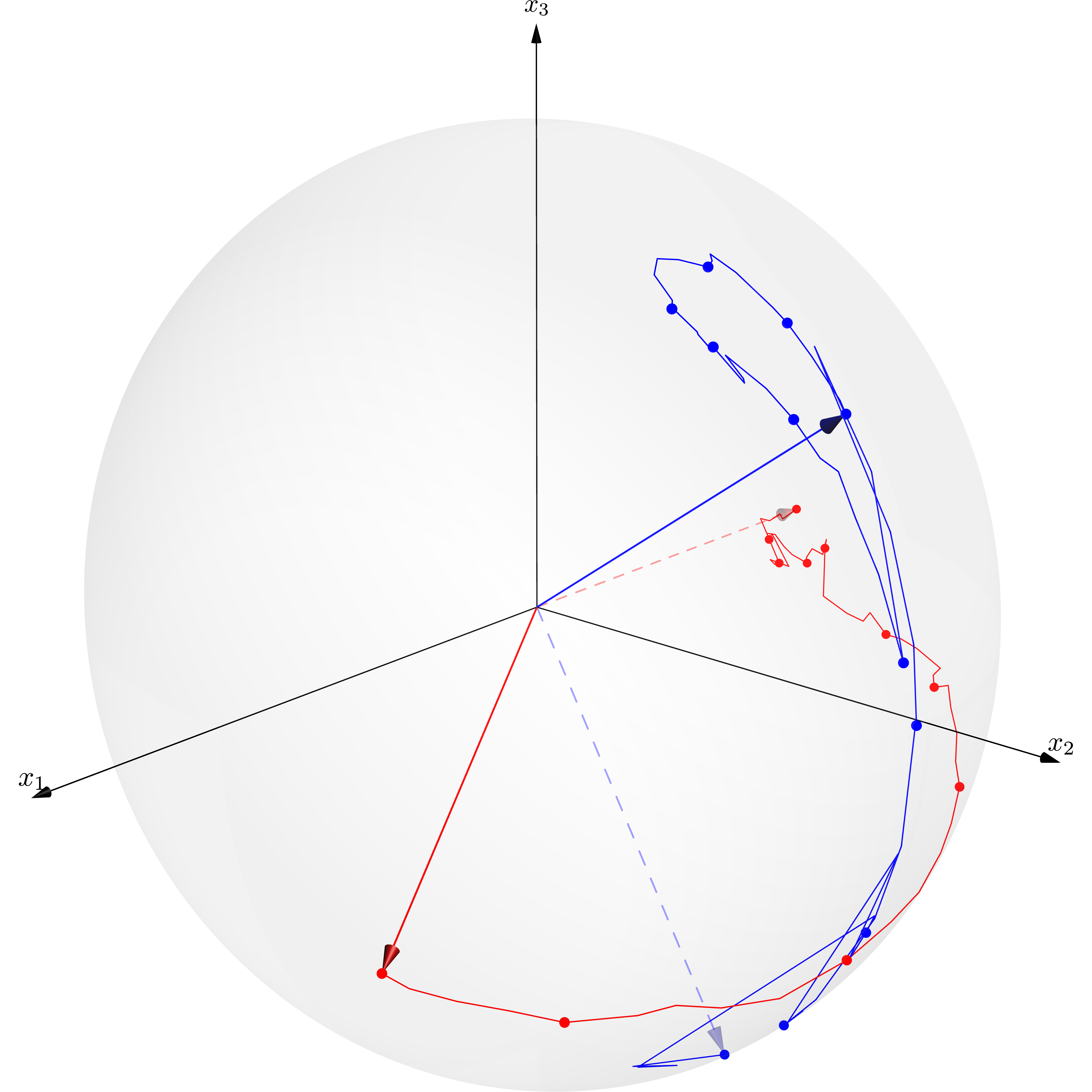}
     \caption{4th spin}
     \end{subfigure}\\
   \begin{subfigure}[b]{0.21\textwidth}
    \includegraphics[width=0.99\textwidth]{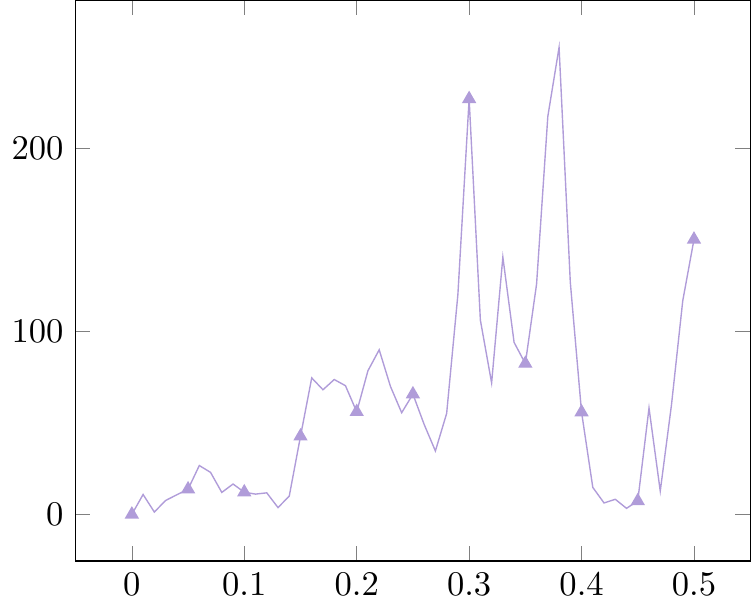}
   \caption{$t\mapsto \|u_1^*(t)\|_{\R^3}^2$}
    \end{subfigure}
    \begin{subfigure}[b]{0.21\textwidth} 
    \includegraphics[width=0.99\textwidth]{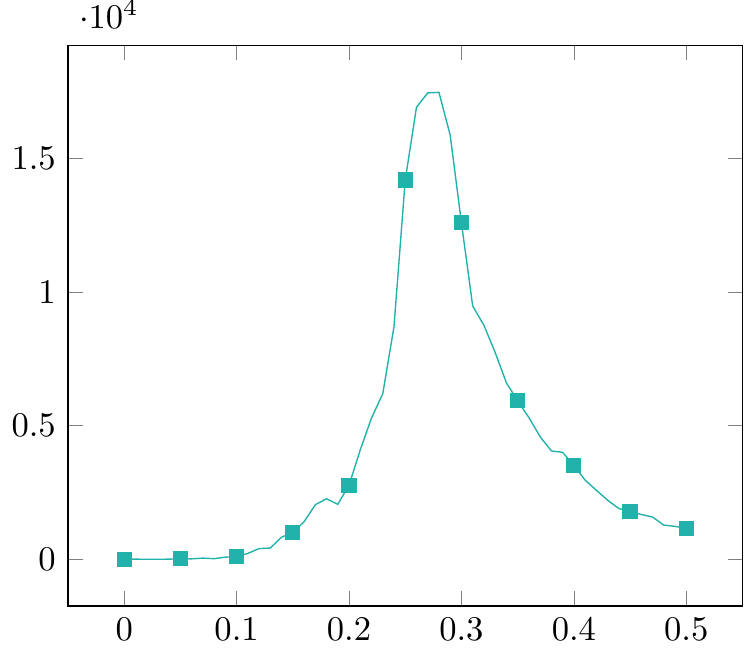}
   \caption{$t\mapsto \|u_2^*(t)\|_{\R^3}^2$}
     \end{subfigure}
      \begin{subfigure}[b]{0.21\textwidth} 
   \includegraphics[width=0.99\textwidth]{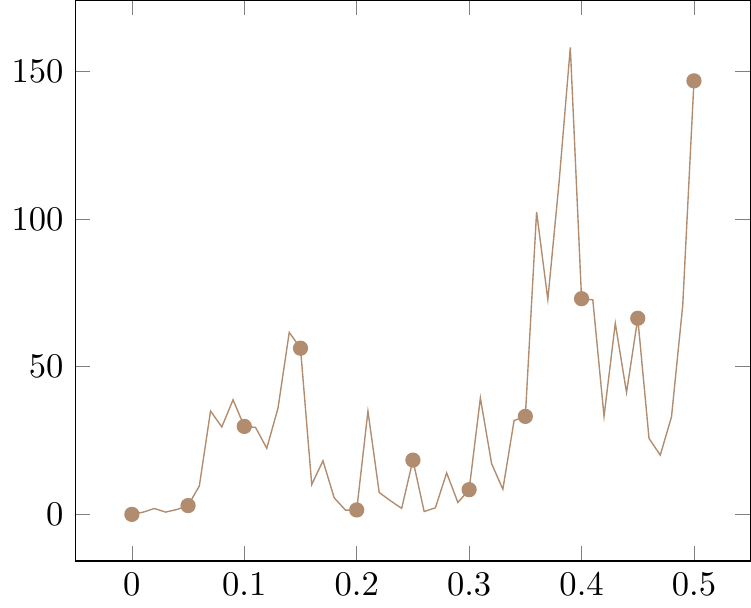}
   \caption{$t\mapsto \|u_3^*(t)\|_{\R^3}^2$}
    \end{subfigure} 
     \begin{subfigure}[b]{0.21\textwidth} 
        \includegraphics[width=0.99\textwidth]{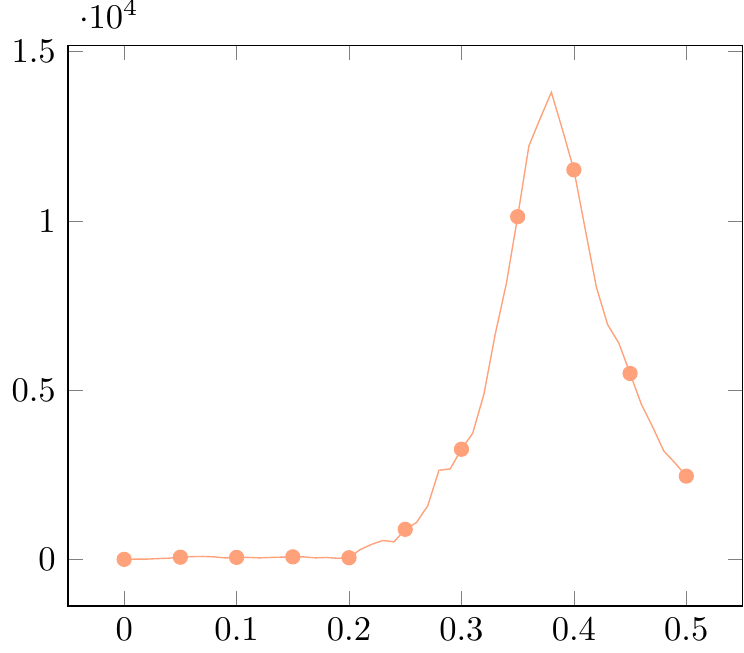}
        \caption{$t\mapsto \|u_4^*(t)\|_{\R^3}^2$}
         \end{subfigure} 
  \caption{ Time evolution of a single trajectory of the optimal state $t\mapsto m^*_i(t)$~(red), the direction of the optimal control $t\mapsto u_i^*(t)\|u_i^*(t)\|_{\R^3}^{-1}$~(blue), and the magnitude of the optimal control $t\mapsto \|u_i^*(t)\|_{\R^3}^{2}$ with set-up $1$ for $i=1,2,3, 4$.}
  \label{fig:state-control-4-spin-set-up-1}
  \end{figure}

\noindent{\bf Set-up $2$:} We use same parameters as in {\bf set-up $1$} with $
 \bar{\bf m}=\big( {\bf e}_1, -{\bf e}_1, -{\bf e_1}, {\bf e}_1\big)$ and $\widetilde{\bf m}(t)=\big({\bf e}_1,\widetilde{\bf m}_2(t), \widetilde{\bf m}_2(t), {\bf e}_1\big)$.
For the second and third spins, significantly synchronous controls at mean times are required to meet approximately the  desired target profile. Like in Set-up $1$, we also observe the formation of loops of the orientation of $u_i^*(t)\|u_i^*(t)\|_{\R^3}^{-1}~(i=2,3)$ close to the terminal time; see Figure~\ref{fig:state-control-4-spin-set-up-2}.

 \begin{figure}
  \centering
   \begin{subfigure}[b]{0.22\textwidth}
  \includegraphics[width=0.99\textwidth]{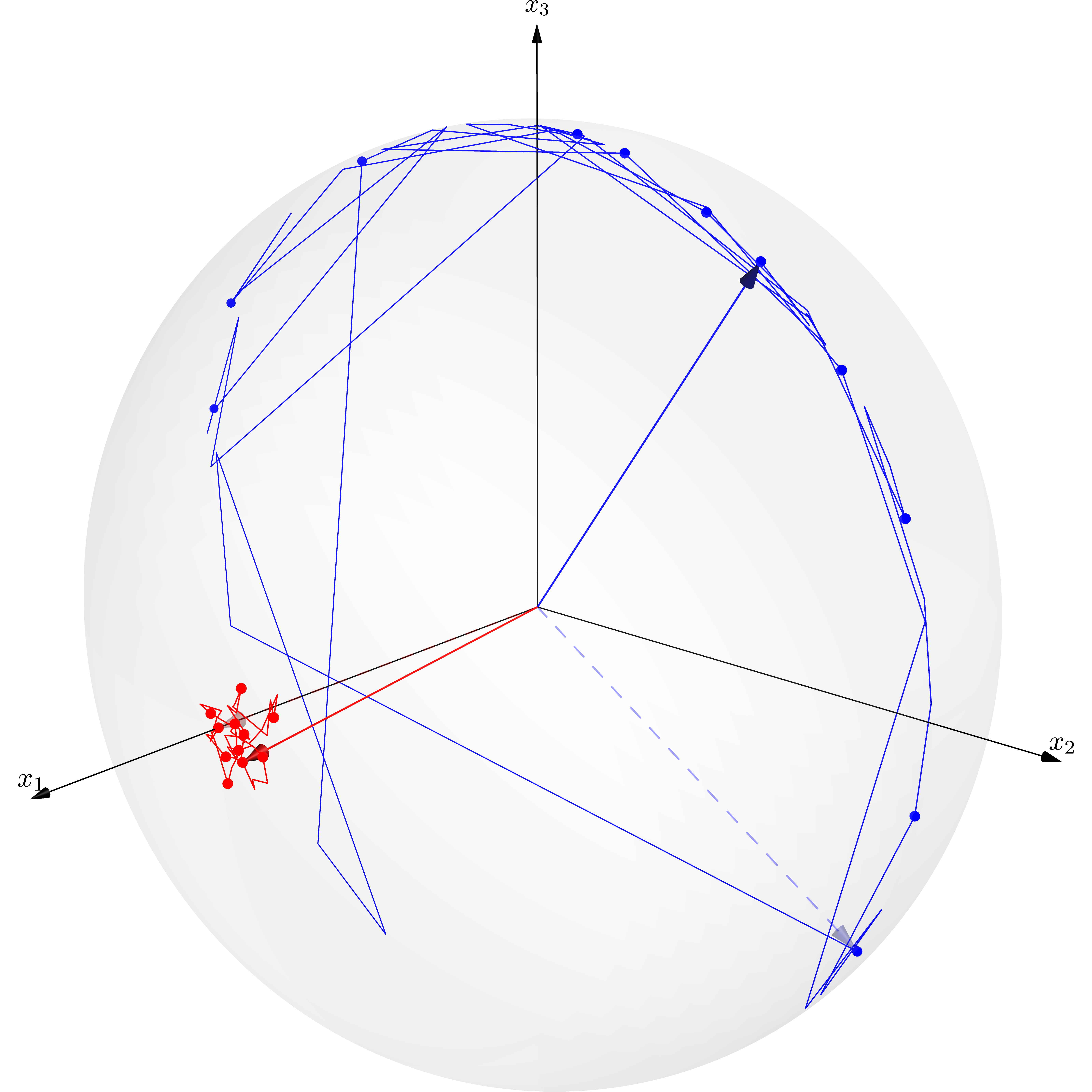}
   \caption{1st spin}
  \end{subfigure}
    \begin{subfigure}[b]{0.22\textwidth} 
  \includegraphics[width=0.99\textwidth]{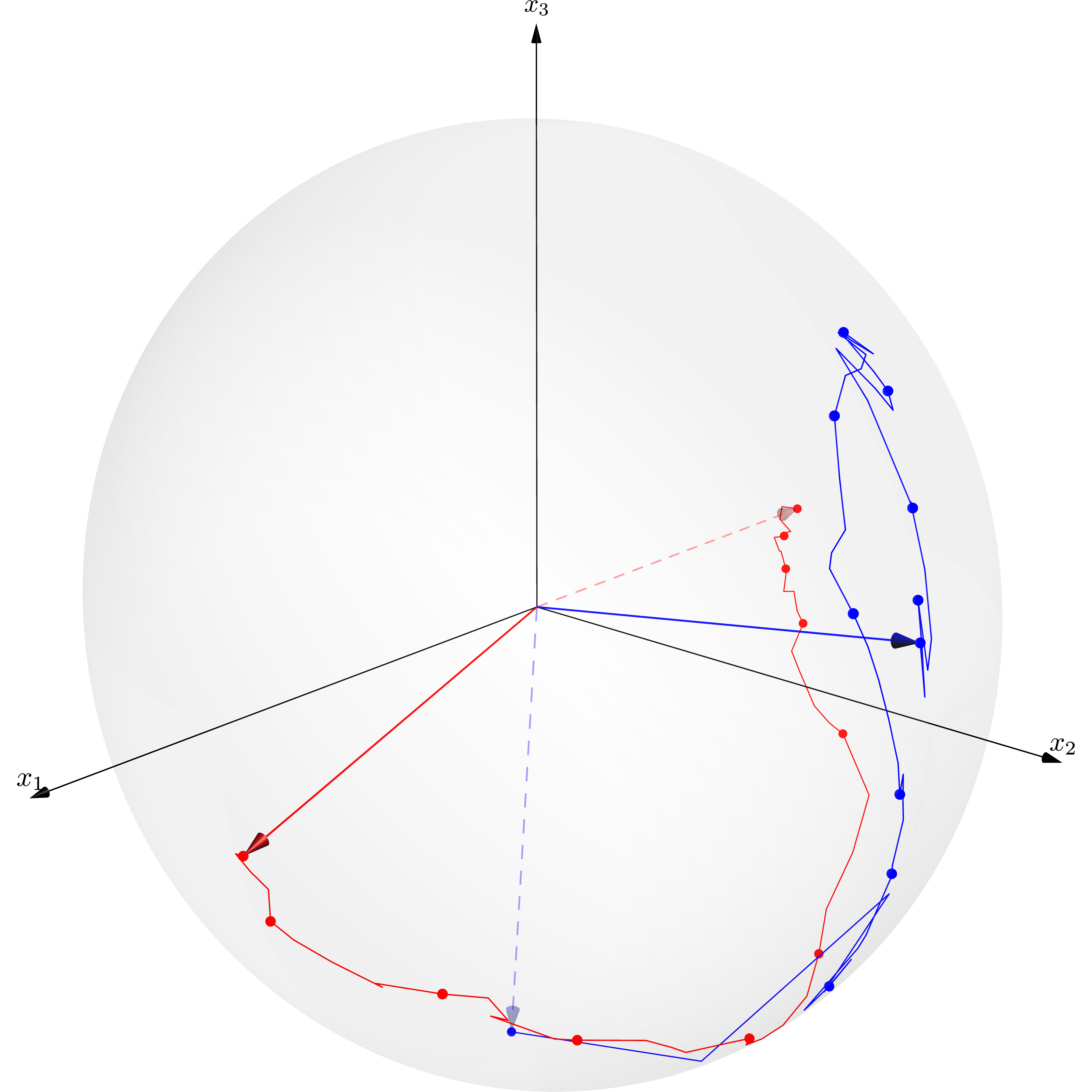}
   \caption{2nd spin}
    \end{subfigure}
      \begin{subfigure}[b]{0.22\textwidth} 
  \includegraphics[width=0.99\textwidth]{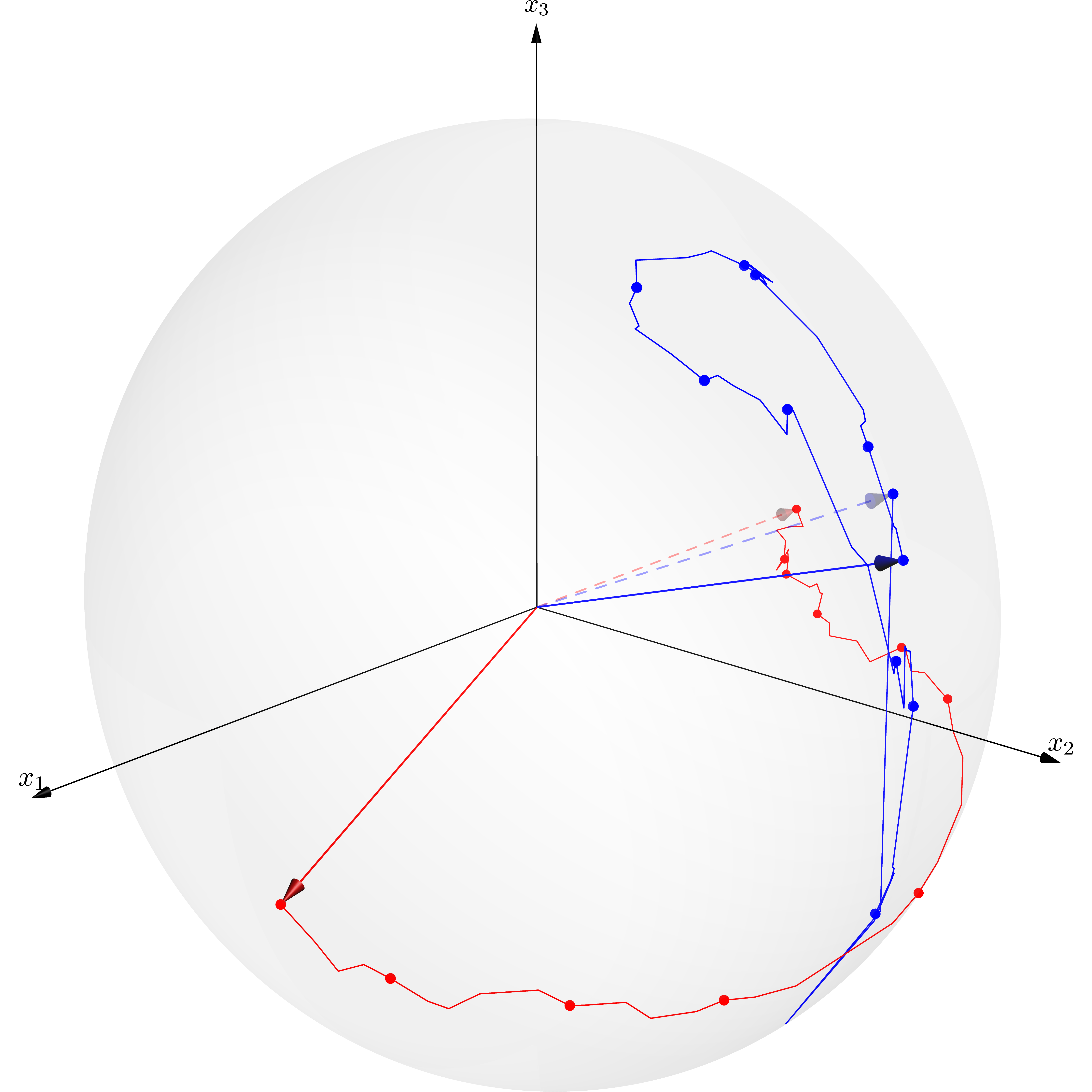}
   \caption{3rd spin}
   \end{subfigure}
    \begin{subfigure}[b]{0.22\textwidth} 
     \includegraphics[width=0.99\textwidth]{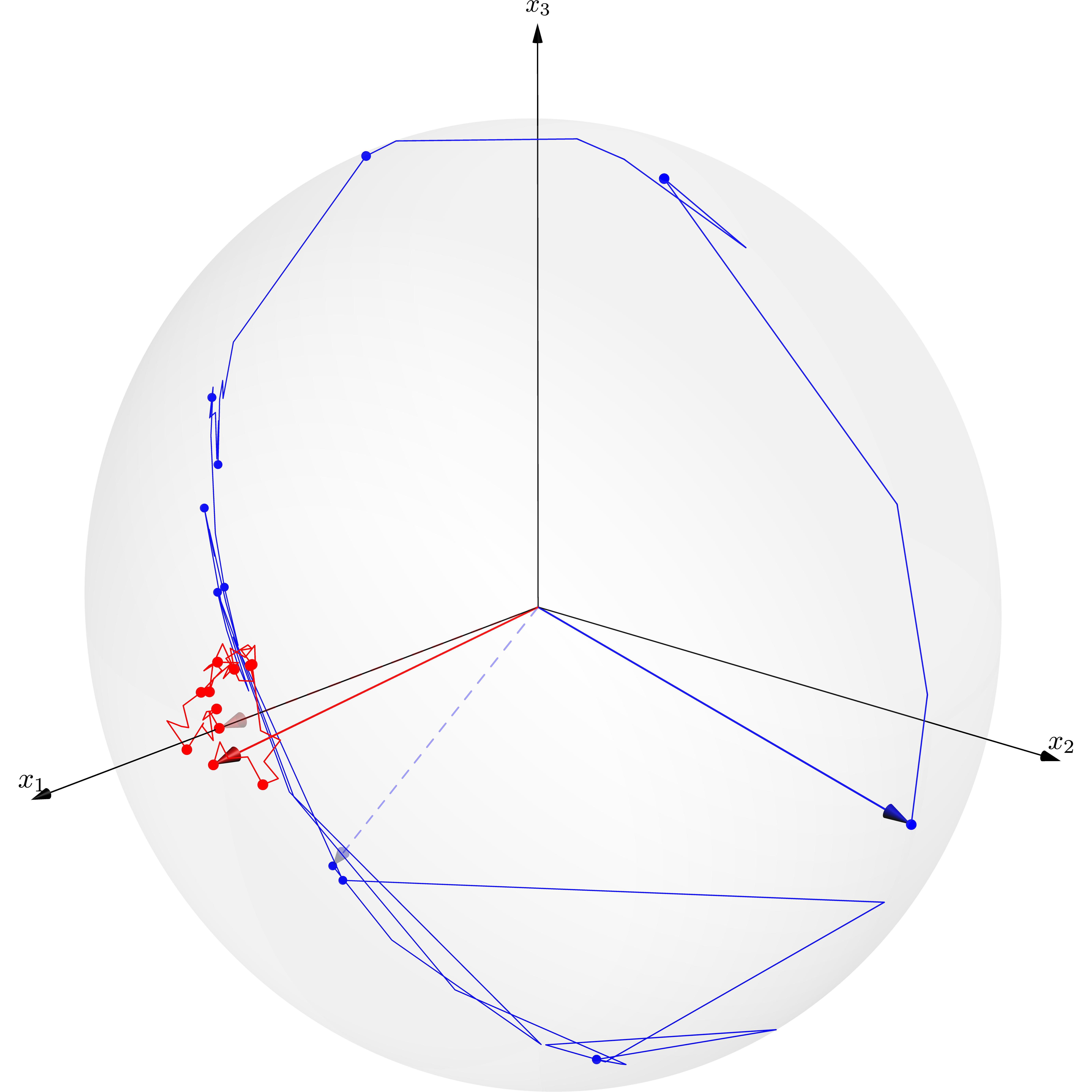}
      \caption{4th spin}
      \end{subfigure}\\
    \begin{subfigure}[b]{0.21\textwidth}
     \includegraphics[width=0.99\textwidth]{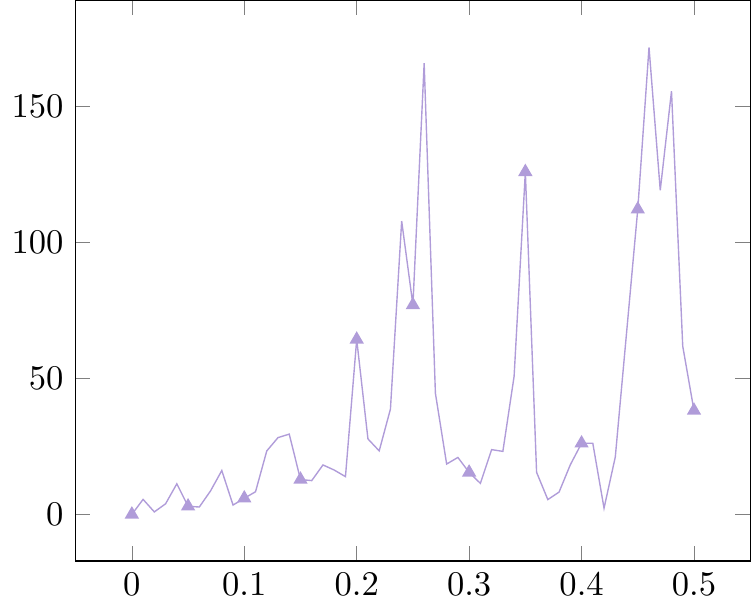}
    \caption{$t\mapsto \|u_1^*(t)\|_{\R^3}^2$}
     \end{subfigure}
     \begin{subfigure}[b]{0.21\textwidth} 
     \includegraphics[width=0.99\textwidth]{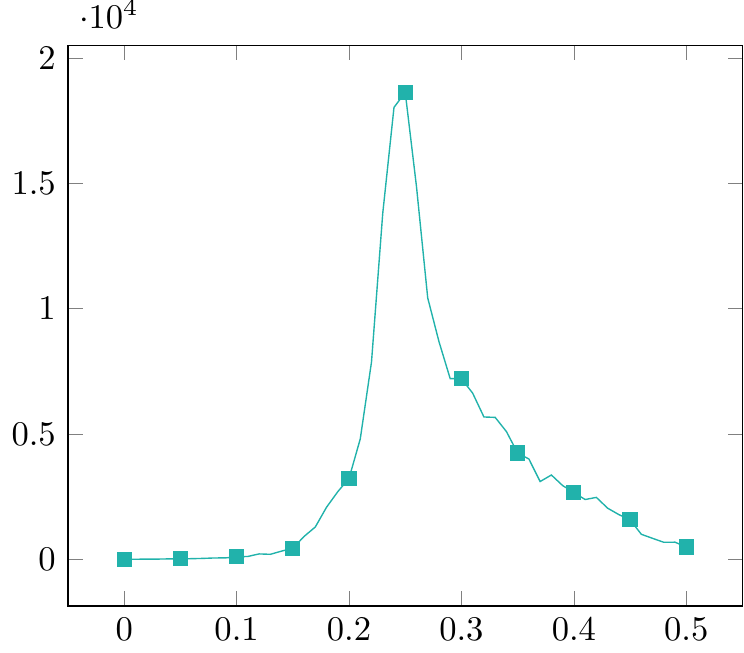}
    \caption{$t\mapsto \|u_2^*(t)\|_{\R^3}^2$}
      \end{subfigure}
       \begin{subfigure}[b]{0.21\textwidth} 
    \includegraphics[width=0.99\textwidth]{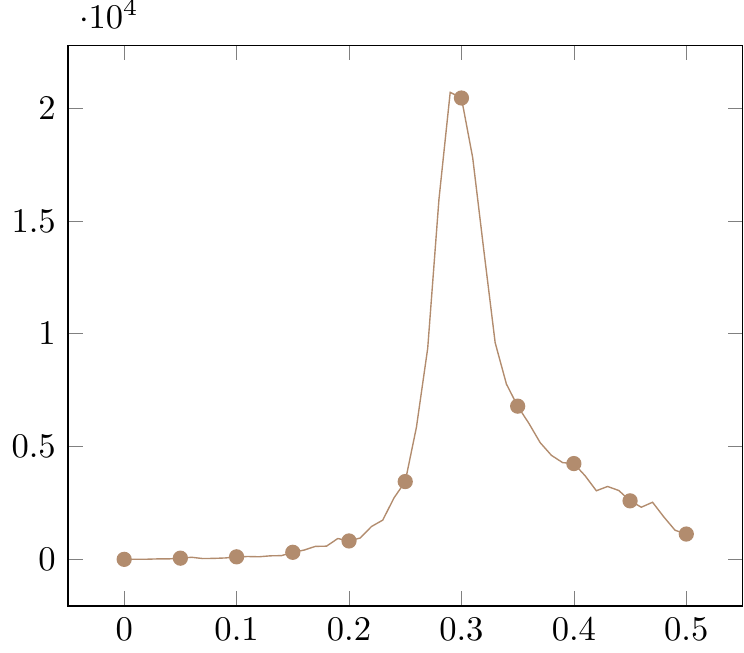}
    \caption{$t\mapsto \|u_3^*(t)\|_{\R^3}^2$}
     \end{subfigure} 
      \begin{subfigure}[b]{0.21\textwidth} 
         \includegraphics[width=0.99\textwidth]{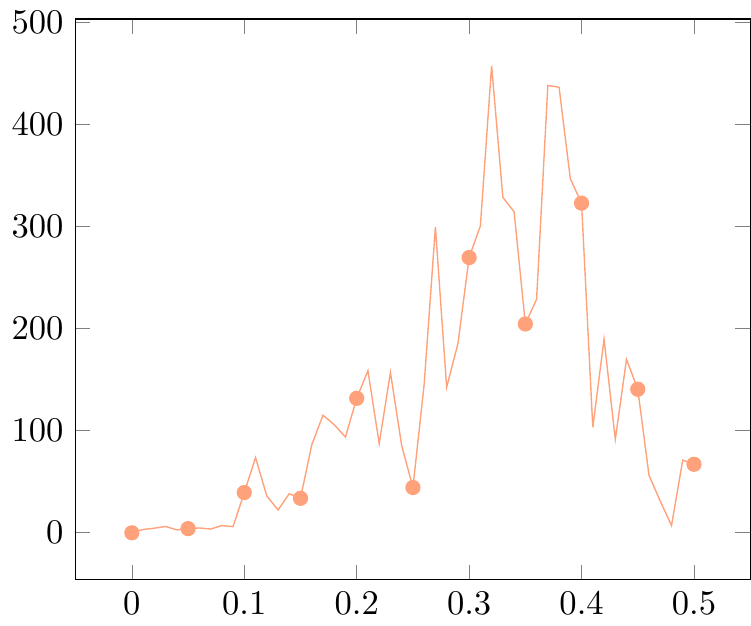}
         \caption{$t\mapsto \|u_4^*(t)\|_{\R^3}^2$}
          \end{subfigure} 
   \caption{ Time evolution of a single trajectory of the optimal state $t\mapsto m^*_i(t)$~(red), the direction of the optimal control $t\mapsto u_i^*(t)\|u_i^*(t)\|_{\R^3}^{-1}$~(blue), and the magnitude of the optimal control $t\mapsto \|u_i^*(t)\|_{\R^3}^{2}$ with set-up $2$ for $i=1,2,3, 4$.}
   \label{fig:state-control-4-spin-set-up-2}
   \end{figure}

 \subsection{Optimal control of ten interacting spins}
 We consider here an ensemble of $N=10$ particles to optimally control the dynamics to reach a deterministic target profile 
 \begin{align*}
 \widetilde{\bf m}=(\widetilde{\bf m}_1,\ldots,\widetilde{\bf m}_{10}):[0,T]\goto (\mathbb{S}^2)^{10} \quad \text{with}  \quad \widetilde{\bf m}_i(t)={\bf e}_1 \quad (i=1,2,\ldots, 10)
 \end{align*}
  within finite time $T$ at minimized expected external energy with initial configuration
  \begin{align*}
  \bar{\bf m}=\big( \bar{\bf m}_1,\ldots, \bar{\bf m}_{10}\big)\,,\quad \text{where}\quad \bar{\bf m}_i= \Big(0,\, \sin(\frac{2\pi i}{10}),\, \cos(\frac{2\pi i}{10}) \Big)^\top \quad (i=1,\ldots,10)\,.
  \end{align*}
   To simulate the optimal pair of the underlying problem, we take again ${\bf D}={\bf 0}$, ${\bf J}$ as in \eqref{eq:matrix-J}, $h({\bf m}(T))= \displaystyle \frac{1}{2} \| {\bf m}(T)- \widetilde{\bf m}(T)\|_{(\R^3)^N}^2$,   and the following set-up of parameters:
 
 \begin{center}
 \begin{tabular}{ | m{0.5cm} | m{1.5cm}| m{1.5cm} | m{1.0cm} | m{2.9cm} | } 
 \hline
 $T$& $(\alpha,\, \delta)$ & $(\lambda,\, \nu)$ & $C_{\tt ext}$ & $\big( \bar{h},\, \tau,\, M\big)$ \\[1mm] 
 \hline
 $0.5$ & $(1.0,\, 0)$ & $(1.0,\, 0.5)$& $1.0$& $\big(10^{-2},\, 10^{-2},\, 10^{4}\big)$\\ [1mm]
 \hline
 \end{tabular}
 \end{center}
 In Figure~\ref{fig:state-control-10-spin}, ${\rm (A)}$, we visualize the behavior of the optimal state ${\bf m}^*$. Due to the large damping coefficient $\alpha=1.0$, we observe fast switching dynamics of the optimal state. With the choice $\lambda=1$ the control is penalised more strongly than in the previous experiments, which has a noticeable effect on the magnitude of ${\bf u}^*$, compare Figures~\ref{fig:state-control-4-spin-set-up-2} (E) -- (H) and \ref{fig:state-control-10-spin} (C) \& (E). At the beginning a stronger control is applied to move towards the desired target profile. Because of the large noise intensity $\nu$, and the less control, some particles of this ensemble do not reach the target profile appropriately. For illustration, we plotted the behavior of the optimal state $t\mapsto m^*_i(t)$~(red), the direction of the optimal control $t\mapsto u_i^*(t)\|u_i^*(t)\|_{\R^3}^{-1}$~(blue), and the magnitude of the optimal control $t\mapsto \|u_i^*(t)\|_{\R^3}^{2}$ for $i=3,7$; see Figure~\ref{fig:state-control-10-spin}.
 
  \begin{figure}
   \centering
    \begin{subfigure}[b]{0.80\textwidth}
   \includegraphics[width=0.99\textwidth,height=3.5cm]{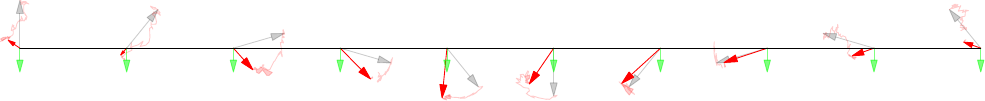}
    \caption{Initial state~(gray), target profile~(green) and optimal solution~(red)}
       \end{subfigure}\\
     \begin{subfigure}[b]{0.21\textwidth}
      \includegraphics[width=0.99\textwidth,height=3.5cm]{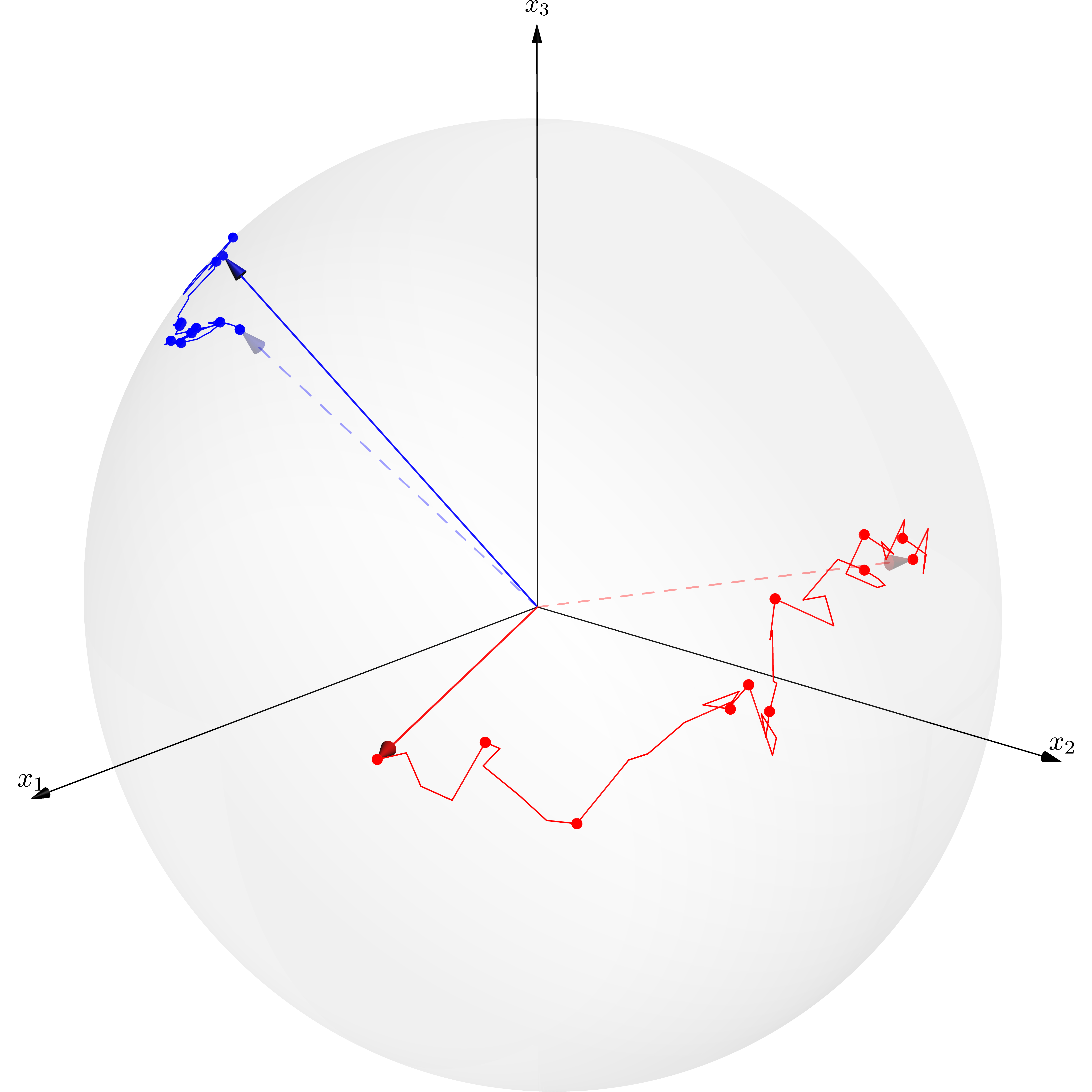}
     \caption{3rd spin}
   \end{subfigure}
\begin{subfigure}[b]{0.21\textwidth} 
 \includegraphics[width=0.99\textwidth,height=3.5cm]{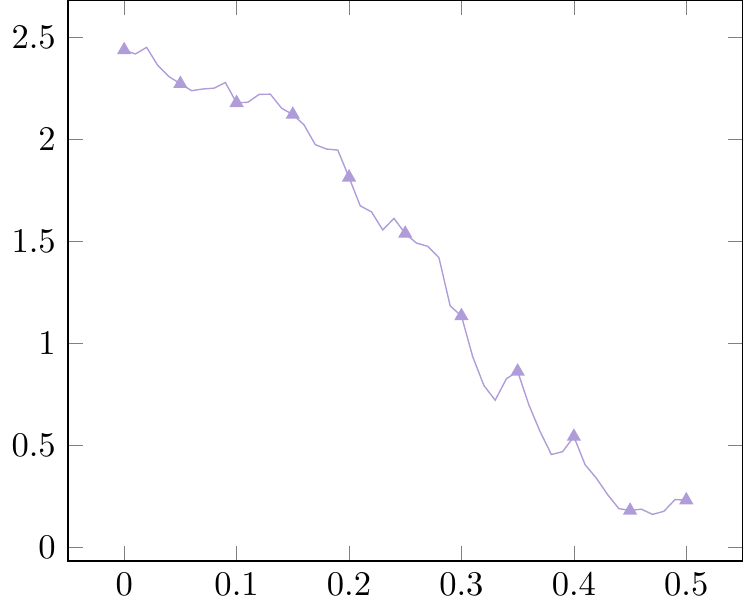}
\caption{$t\mapsto \|u_3^*(t)\|_{\R^3}^2$}
 \end{subfigure} 
      \begin{subfigure}[b]{0.21\textwidth} 
      \includegraphics[width=0.99\textwidth,height=3.5cm]{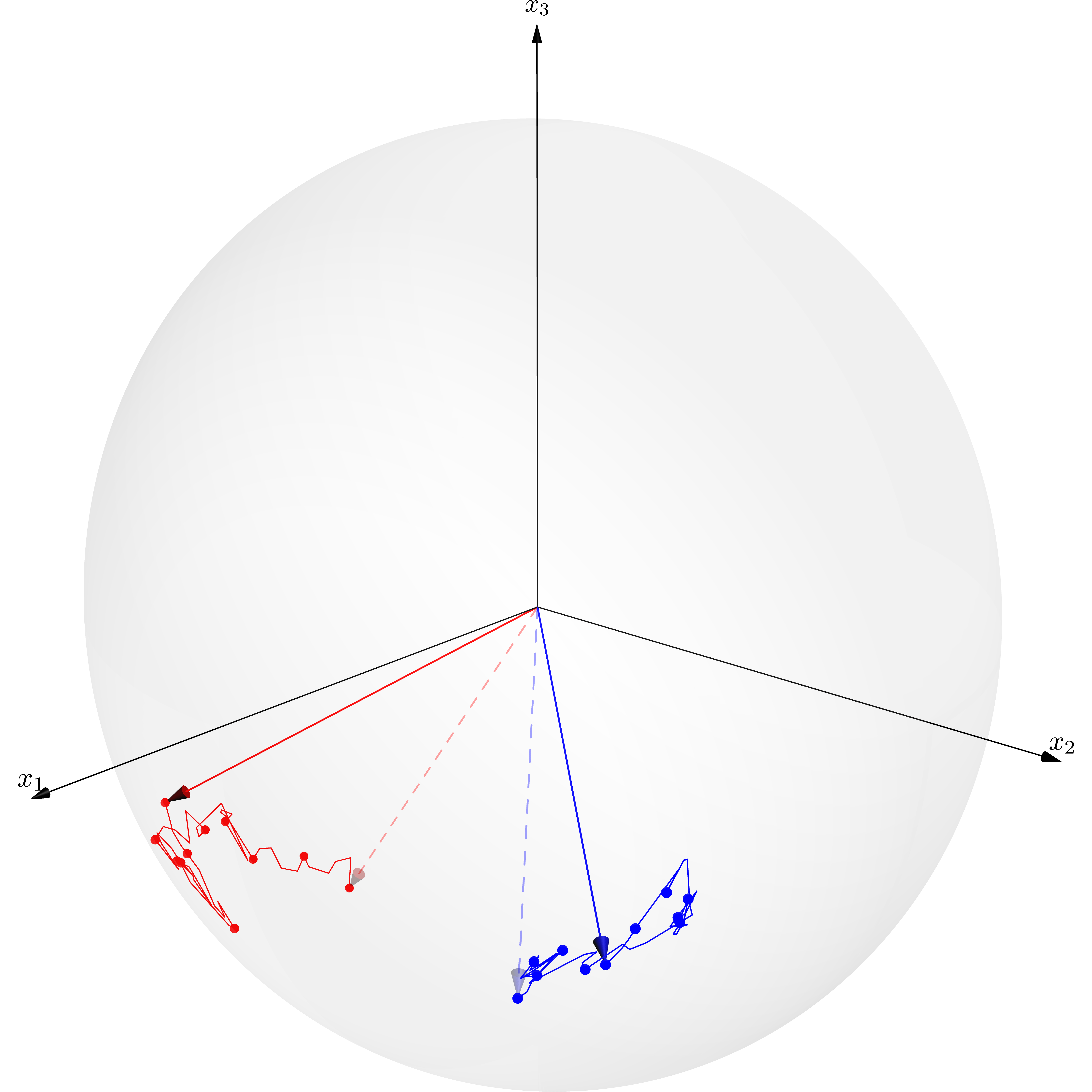}
     \caption{7th spin}
  \end{subfigure}
 \begin{subfigure}[b]{0.21\textwidth} 
  \includegraphics[width=0.99\textwidth,height=3.5cm]{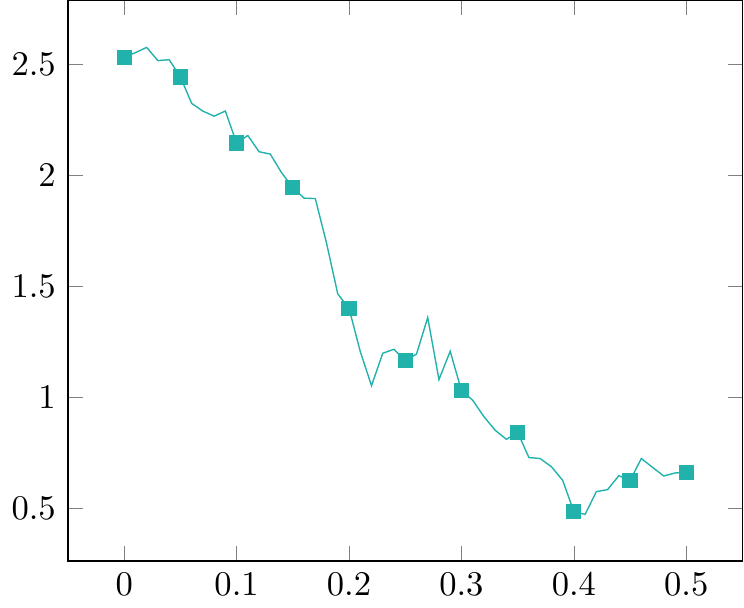}
          \caption{$t\mapsto \|u_7^*(t)\|_{\R^3}^2$}
           \end{subfigure} 
    \caption{ A single realization of the optimal state $m^*$~(red) at final time $T$, the time evolution of the optimal state $t\mapsto m^*_i(t)$~(red), the direction of the optimal control $t\mapsto u_i^*(t)\|u_i^*(t)\|_{\R^3}^{-1}$~(blue), and the magnitude of the optimal control $t\mapsto \|u_i^*(t)\|_{\R^3}^{2}$ for $i=3,7$.}
    \label{fig:state-control-10-spin}
    \end{figure}

\end{document}